\theoremstyle{plain}
\newtheorem{thm}{Theorem}[section]
\newtheorem{lemma}[thm]{Lemma}
\newtheorem{clly}[thm]{Corollary}
\newtheorem{maintheorem}{Theorem}
\theoremstyle{definition}
\newtheorem{defi}[thm]{Definition}
\newtheorem{claim}{Claim}
\newcommand{\RR}{{\mathbb R}}
\newcommand{\NN}{{\mathbb N}}
\renewcommand{\epsilon}{\varepsilon}
\newcommand{\diam}{\operatorname{diam}}
\newcommand{\domi}{\operatorname{Dom}}
\newcommand{\cC}{\EuScript{C}}
\newcommand{\cT}{\EuScript{T}}
\newcommand{\cH}{\EuScript{H}}
\newcommand{\cO}{\EuScript{O}}
\def \NN {{\mathbb N}}
\def \RR {{\mathbb R}}
\def \cC {{\mathcal C}}
\def \cH {{\mathcal H}}
\def \cO {{\mathcal O}}
\def \cS {{\mathcal S}}
\def \cT {{\mathcal T}}
\def \cX {{\mathcal X}}
\def \Hom {{\mathcal  Hom}}
\def \SQ {{\mathcal  SQ}}
\begin{document}

\title[Continuum-wise expansivity and entropy for flows]
{ Continuum-wise  expansivity and entropy for flows}

\author{
Alexander Arbieto, Welington Cordeiro
and Maria Jos\'e Pac\'ifico 
 }
 \thanks{A. A and M.J.P. were partially supported by CNPq,
  PRONEX-Dyn.Syst., FAPERJ. W. C. were partially supported by CAPES.}

\maketitle

\begin{abstract}{We define the concept of continuum wise expansive for flows, and we prove that continuum wise expansive flows on compact metric spaces with topological dimension greater than one have positive entropy.   }
\end{abstract}

{\tiny AMS Classification: 37A35, 37B40, 37C10, 37C45, 37D45}

\section{Introduction}\label{sec:intro} 

The notion of expansiveness was introduced in the middle of the twentieth century by Utz \cite{Ut}. Expansiveness is a property shared by a large class of dynamical systems exhibiting chaotic behavior. Roughly speaking a system is expansive if two orbits cannot remain close to each other under the action of the system. This notion  is responsible for  many chaotic properties for homeomorphisms defined on compact spaces, see for instance \cite{Hi,Hi2,Le,Ft,Vi} and references therein,  and there is an extensive literature concerning expansive systems. 
A classical result establishes that  every hyperbolic $f$-invariant subset $M$ is expansive. There are many variants of the definition of expansiveness, all of them of interest, as \emph{positive expansiveness} in \cite{Sc}, \emph{point-wise expansiveness} in \cite{Re}, \emph{entropy-expansiveness} in \cite{B3}.
In \cite{Ma} Ma\~n\'e  proved that if a compact metric space
admits an expansive homeomorphism then its topological dimension
is finite. 
In the 90s, Kato introduced the notion of \emph{continuum-wise expansiveness} for homeomorphisms \cite{K1}, and extended the 
result of Ma\~n\'e for $cw$-expansive homeomorphisms.

For  flows, a seminal work is \cite{BW}, where it is  analyzed 
this concept for flows and it is proved that some properties valid for discrete dynamics are also valid for flows. But, the definition of expansiveness
in \cite{BW} does not admit flows with singularities or equilibrium points. 
Using this definition, Keynes and Sears \cite{KS} extend the results 
of Ma\~n\'e  for expansive flows. They proved that if a compact metric
space admits an expansive flow then its topological dimension is finite.
They also proved that expansive flows on manifolds with topological
dimension greater than $1$ have positive entropy.

The goal of this work is to give  a  definition of \emph{continuum-wise expansiveness} for flows, $cw$-expansiveness for short, and extend some of the results already established for discrete dynamics for $cw$-expansive flows. Our main result establishes  that $cw$-expansive  continuos flows on compact metric spaces with
topological dimension greater than $1$ have positive entropy, extending to continuos dynamics the result of Keynes and Sears.

 Throughout this paper, $M$ denotes a compact metric space. To announce precisely our result, let us recall some concepts and definitions already 
 established.
 
A \emph{flow} in $M$ is a family of homemorphisms $\{X^t\}_{t\in\mathbb{R}}$ satisfying $X^0(x)=x$, for all $x\in M$ and $X^{t+s}(x)=X^t(X^s(x))$ for all $s,\, t \in \RR$ and $x\in M$. A \emph{continuum} is a compact connected set and it is \emph{non-degenerate} if it contains more than one point. We denote by $\cC(M)$ the set of all continuum subsets of $M$.

Let $\Hom(\mathbb{R},0)$ be the set of homeomorphisms on $\RR$
fixing the origin and if $A$ is a subset of $M$, $C^0(A,\RR)$ denotes
the set of real continuos maps defined on $A$. Define

\begin{itemize}
\item $\cH(A)=\{\alpha:A\rightarrow \Hom(\mathbb{R},0); \, \exists\,\, x_\alpha\in A \, \mbox{with} \  \alpha(x_\alpha)=id_\mathbb{R} \ \mbox{and}  \ \alpha(.)(t)\in C^0(A,\mathbb{R}), \forall \,\, t\in\mathbb{R} \}$;
\item If $t\in\mathbb{R}$ and $\alpha\in \cH(A)$, $\mathcal{X}^t_\alpha(A)=\{X^{\alpha(x)(t)}(x); x\in A\}$.
\end{itemize}

\begin{figure}[htb]
\begin{center}
\psfrag{a}{$A$}
\psfrag{b}{$X^t(A)$}
\psfrag{c}{$\cX^t(A)$}
\psfrag{d}{$y$}
\psfrag{e}{$x$}
\psfrag{f}{$X^t(y)$}
\psfrag{g}{$X^t(x)$}
\psfrag{h}{$X^{\alpha(y,t)}(y)$}
\includegraphics[height=4.5cm]{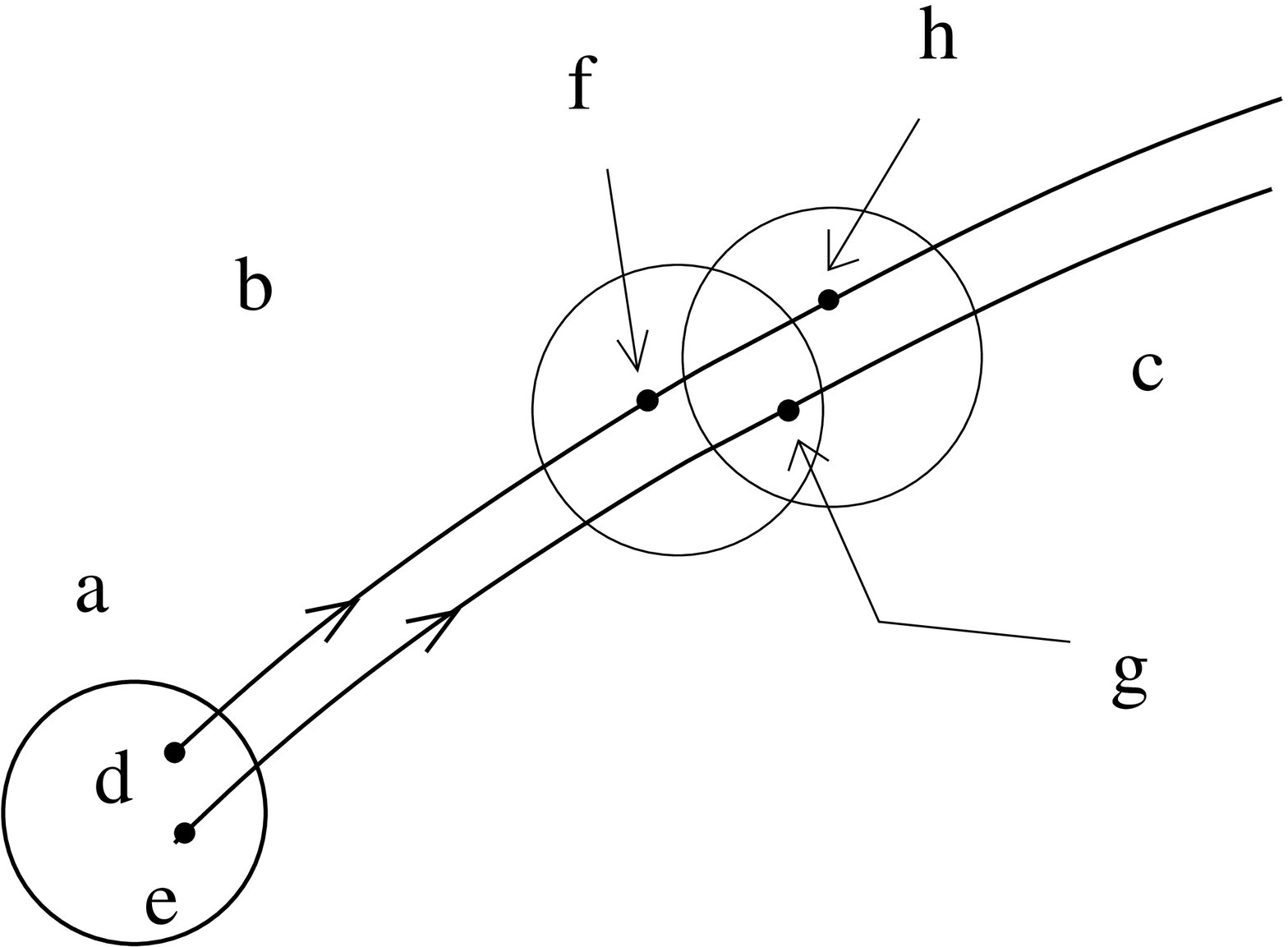}
\caption{} \label{fig1}
\end{center}
\end{figure}

\begin{defi}\label{def-cw}
A flow $X^t$ is \emph{$cw$-expansive} if for any $\epsilon>0$ there is a $\delta>0$ such that if $A\subset M$ is a continuum and $\alpha\in \cH(A)$ satisfies 
$$\diam(\mathcal{X}^t_\alpha(A) )<\delta, \ \ \forall t\in\mathbb{R},$$
then $A\subset X^{(-\epsilon,\epsilon)}(x_\alpha)$.
\end{defi}

\remark Note that in the above definition, as $\alpha(x_\alpha)=id_\RR$ the orbit 
$\cO(x_\alpha) $ guides  the
shearing effect caused by displacement of $A$ by the set of maps in $\cH(A)$.
\vspace{0.2cm}

Our main result is

\begin{maintheorem}\label{Th-$cw$}
If $X^t$ is a $cw$-expansive flow and the topological dimension of $M$ is greater than $1$, then the topological entropy of $X^t$ is positive.
\end{maintheorem}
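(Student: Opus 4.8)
## Proof strategy

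The plan is to adapt the Keynes--Sears argument for expansive flows to the $cw$-expansive setting, following the template Kato used to pass from expansive to $cw$-expansive homeomorphisms. The guiding idea is that positive entropy can be detected by exhibiting, for a fixed scale, exponentially many orbit segments that are pairwise separated; equivalently, one shows that the flow fails to be ``entropy-expansive at scale zero'' in a quantitative way, and converts non-trivial continua that stay $\delta$-close under all reparametrizations into a mechanism producing separated sets. Concretely, I would first fix $\epsilon>0$ and take the $\delta=\delta(\epsilon)$ from Definition~\ref{def-cw}. Because $\dim M>1$, a standard dimension-theoretic fact (the same one underlying Man\~e's and Kato's theorems) guarantees that $M$ cannot be covered by finitely many sets of small diameter each meeting every continuum in a totally disconnected set; hence there exist non-degenerate continua of arbitrarily small diameter. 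The contrapositive of $cw$-expansiveness then says: for any such small non-degenerate continuum $A$ and any $\alpha\in\cH(A)$, if $A\not\subset X^{(-\epsilon,\epsilon)}(x_\alpha)$ then some reparametrized copy $\cX^t_\alpha(A)$ has diameter $\geq\delta$. This is the ``instability'' input that will be iterated.

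The core of the argument is a \textbf{spanning/separated set count}. I would build, by induction on $n$, a family of continua contained in a single small ball such that their images under $X^{t}$ for $t$ in a long interval $[0,T_n]$ are forced to spread out: at each step, $cw$-expansiveness (in contrapositive form) lets one subdivide a continuum into at least two pieces whose $X^{t}$-orbits, for suitable reparametrizations tracking the flow direction, become $\delta$-separated at some time. The reparametrization freedom encoded in $\cH(A)$ is exactly what compensates for the flow direction (the shearing effect noted in the remark after Definition~\ref{def-cw}), so that separation in the genuine transverse directions is what gets detected. Counting: after $n$ steps one obtains $\gtrsim 2^n$ continua that are $(\delta, T_n)$-separated in the Bowen sense, with $T_n\leq cn$ for a constant $c$ depending only on the modulus of continuity of the flow on the compact set $M$. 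This yields $h_{\mathrm{top}}(X^1)\geq \tfrac{1}{c}\log 2>0$, hence positive topological entropy for the flow.

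The technical heart — and the main obstacle — is making the inductive separation step precise \emph{in the presence of the reparametrization group} $\cH(A)$ and of possible singularities. Two difficulties must be handled. First, when one subdivides a continuum $A$ into subcontinua $A_1, A_2$, one needs a coherent choice of $\alpha_i\in\cH(A_i)$ with compatible base points $x_{\alpha_i}$ so that the diameters $\diam(\cX^t_{\alpha_i}(A_i))$ can be simultaneously controlled, and so that the separation witnessed at scale $\delta$ is not an artifact of re-choosing the parametrization; this requires a careful ``uniformization'' lemma for $\cH$, presumably proved using a partition-of-unity / continuity argument on $C^0(A,\RR)$ and the compactness of $A$. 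Second, near singularities the flow's time-reparametrization degenerates, so one must verify that the continua produced stay in regions where $X^{(-\epsilon,\epsilon)}(x)$ has small diameter uniformly, or else absorb the singular set into the covering-dimension bookkeeping; I expect this to be dealt with by passing to a suitable sub-scale and using that the singular set, being closed with empty interior in the relevant sense, does not obstruct the existence of the small non-degenerate continua supplied by the dimension hypothesis. Once these two points are in place, the entropy estimate follows by the routine Bowen-ball counting sketched above.
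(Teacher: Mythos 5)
Your overall template is the right one --- a binary subdivision of continua producing $2^n$ orbit segments that are $\delta$-separated within time $O(n)$, which is exactly the skeleton of the paper's argument --- but two of the steps you assert are precisely the ones that carry the mathematical content, and as stated they either fail or are left as acknowledged gaps. First, your use of the hypothesis $\dim M>1$ is off target. Non-degenerate continua of arbitrarily small diameter exist already when $\dim M\geq 1$, and such a continuum may well be an orbit arc $X^{[0,s]}(x)$, for which $cw$-expansiveness gives no separation at all (the conclusion $A\subset X^{(-\epsilon,\epsilon)}(x_\alpha)$ is exactly the permitted outcome). What $\dim M>1$ actually buys, and what the paper extracts via Lemma~\ref{lst} and Lemma~\ref{l.6}, is a local cross-section of positive topological dimension, hence a non-degenerate continuum \emph{transverse} to the flow, and moreover one lying in the unstable set $W^u_\eta(\cS,\cT)$ so that it expands consistently forward in time. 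Without reducing to a transverse continuum your inductive mechanism has no valid starting point.

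Second, the estimate $T_n\leq cn$ is asserted with $c$ ``depending only on the modulus of continuity of the flow,'' but no modulus-of-continuity argument can give a uniform bound on the time needed for a continuum of diameter $\geq\gamma$ to expand to the separation scale $\delta_0$. This uniform expansion time is the content of the paper's Lemma~\ref{Uniforme} and Corollary~\ref{c1}, proved by a compactness/contradiction argument in the hyperspace $\cC(\bigcup S_i)$ that uses $cw$-expansiveness itself; it is the quantitative heart of the proof and cannot be skipped. Relatedly, after subdividing a $\delta$-separated continuum into two pieces of diameter $\delta_1/3$ one must know these pieces re-expand to scale $\delta_1$ within the same uniform $N$ steps (Lemma~\ref{l2}), and that they remain in the domain of the return map --- the paper needs a further device (three interleaved $\delta$-adequate families of sections and the auxiliary map $\varphi$) just for this last point. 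Finally, the reparametrization-coherence problem you flag as ``presumably proved using a partition-of-unity argument'' is not resolved in your proposal; the paper sidesteps it entirely by replacing the flow with return maps on sections, where no reparametrization is needed, and by relating the section entropy to $h(X^t)$ through Theorem~\ref{t-entropia}. As it stands your write-up is a correct strategy outline with the three hardest lemmas missing or misattributed.
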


This paper is organized as follows: in Section \ref{sec-basic} we prove basic properties satisfied by $cw$-expansive flows; in Section $3$ we use Keynes and Sears's  techniques to find good properties in cross sections of $cw$-expansive flows; in Section $4$ we introduce the notion of entropy for special pairs of cross sections and we relate the entropy of this cross sections with the entropy of the flow; in Section $5$ we prove our main theorem.

\section{Basic properties of $cw$-expansive Flows}\label{sec-basic}

Let $M$ be a compact metric space and $X^t$ a $cw$-expansive flow in $M$.   
In this section we prove basic results satisfied by $X^t$.
Given $x \in M$, let $\cO(x)=\{ X^t(x), \, t \in \RR\}$ be the orbit of $x$. A $x$ is a \emph{point fixed} of $X^t$ if $\cO(x)=\{ x\}$.  
Recall that $\cO(x)$ is regular if it is not a unique point. 

\begin{lemma} IF $X^t$ is a $cw$-expansive flow, then each fixed point of $X^t$ is not accumulated by regular orbits of $X^t$. 
\end{lemma}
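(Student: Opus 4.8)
The plan is to argue by contradiction. Suppose $p\in M$ is a fixed point and there is a sequence $x_n\to p$ with $x_n\neq p$ and $\cO(x_n)$ regular for every $n$. Fix $\epsilon>0$. I intend to produce, for every prescribed $\delta>0$, a non-degenerate continuum $A$ together with $\alpha\in\cH(A)$ such that $\diam(\cX^t_\alpha(A))<\delta$ for all $t\in\RR$ while $A\not\subset X^{(-\epsilon,\epsilon)}(x_\alpha)$, contradicting Definition~\ref{def-cw}.

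First I would put $\tau=2\epsilon$ and take $A_n=\{X^s(x_n):|s|\le\tau\}$, with distinguished point $x_\alpha=x_n$. Assuming for the moment that $\cO(x_n)$ is not periodic of small period (the exceptional case is discussed below), $A_n$ is a non-degenerate continuum, $s\mapsto X^s(x_n)$ is a homeomorphism of $[-\tau,\tau]$ onto $A_n$, and since $(x,s)\mapsto X^s(x)$ is continuous and $X^s(p)=p$, the orbit arcs $\{X^r(x_n):|r|\le R\}$ converge to $\{p\}$ in the Hausdorff metric for any fixed $R$. Next I would fix once and for all a constant $T>\tau$ and a continuous $\psi\colon\RR\to[0,1]$ which vanishes on a neighbourhood of $0$, equals $1$ on $\{|t|\ge T\}$, and has $|\psi'|$ small enough that $t\mapsto t-s\,\psi(t)$ is an orientation preserving homeomorphism of $\RR$ fixing the origin for every $s\in[-\tau,\tau]$. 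Then, for $y=X^s(x_n)\in A_n$, define $\alpha_n(y)\in\Hom(\RR,0)$ by $\alpha_n(y)(t)=t-s\,\psi(t)$. By construction $\alpha_n(x_n)=\mathrm{id}_\RR$ and $y\mapsto\alpha_n(y)(t)$ is continuous on $A_n$ for each $t$, so $\alpha_n\in\cH(A_n)$.

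Now $X^{\alpha_n(y)(t)}(y)=X^{t+s(1-\psi(t))}(x_n)$. For $|t|\ge T$ this equals $X^t(x_n)$ regardless of $s$, so $\cX^t_{\alpha_n}(A_n)$ is a single point; for $|t|<T$ it is contained in the orbit arc $\{X^r(x_n):|r|\le T+\tau\}$, whose diameter tends to $0$ as $n\to\infty$ by the uniform continuity remark above (here it is crucial that $T$ and $\tau$ were fixed before $n$). Hence, given $\delta>0$, for all large $n$ one has $\diam(\cX^t_{\alpha_n}(A_n))<\delta$ for every $t$, and $cw$-expansiveness would force $A_n\subset X^{(-\epsilon,\epsilon)}(x_n)$. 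But $X^{\tau}(x_n)=X^{2\epsilon}(x_n)\in A_n$ does not lie in $X^{(-\epsilon,\epsilon)}(x_n)$ (again because $\cO(x_n)$ is not periodic of small period), a contradiction.

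The step I expect to be the real obstacle is precisely the excluded case, namely when, after $\epsilon$ has been fixed, every $x_n$ sufficiently close to $p$ lies on a periodic orbit of period $\le 2\epsilon$: then $\cO(x_n)$ is a tiny invariant loop around $p$, so with $\alpha=\mathrm{id}$ one has $\cX^t_\alpha(\cO(x_n))=\cO(x_n)$ of arbitrarily small diameter, yet $\cO(x_n)\subset X^{(-\epsilon,\epsilon)}(x_n)$, and the scheme above produces no contradiction. Handling this requires a different idea: one exploits that $t=0$ already forces $\cX^0_\alpha(A)=A$ and hence $\diam(A)<\delta$, so that only finitely many of these loops can be "large'', and plays periods off against diameters; alternatively one passes to a Hausdorff limit of suitable continua through $p$ and reparametrizes in the limit. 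This periodic case is the delicate part of the argument; by contrast the explicit construction of the homeomorphism factors $\alpha_n(y)(t)=t-s\,\psi(t)$, though fiddly (three simultaneous constraints: fixing $0$, being a genuine homeomorphism of $\RR$ for all $|s|\le\tau$, and depending continuously on the base point with the right behaviour for large $|t|$), is entirely routine.
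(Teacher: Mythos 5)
Your main construction is correct and is essentially the paper's own argument carried out more carefully: the paper likewise takes a short orbit arc $A=X^{[0,1]}(x)$ through a point $x$ close to $p$ (with $\epsilon=1$), and reparametrizes via an interpolation $h_t=(1-t)\mathrm{Id}+th$ so that $\cX^{s}_\alpha(A)$ collapses to a single point for $s$ outside a bounded interval and stays in a small ball around $p$ for $s$ inside it. Your $\alpha_n(y)(t)=t-s\,\psi(t)$ is the same device with the bookkeeping made explicit (and arguably more correct: as written, the paper's $h$ does not actually make $\cX^{s}_\alpha(A)$ a single point for $s\notin(-2,1)$, though the intended mechanism is clearly the one you implement). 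So up to the final step, where you invoke $X^{2\epsilon}(x_n)\notin X^{(-\epsilon,\epsilon)}(x_n)$, your argument is sound.

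The case you flag at the end is, however, a genuine gap, and neither of the repairs you sketch closes it. If every regular point near $p$ lies on a periodic orbit whose period tends to $0$ as the point approaches $p$, then any continuum $A$ contained in one such loop $\gamma$ satisfies both the hypothesis of Definition~\ref{def-cw} (every $\cX^{t}_\alpha(A)$ lies in $\gamma$, which has small diameter) and its conclusion ($A\subset\gamma= X^{(-\epsilon,\epsilon)}(x_\alpha)$ once the period is below $2\epsilon$), so no contradiction can be extracted from such continua; and in a general compact metric space there need be no other nondegenerate continua near $p$. Concretely, let $M$ consist of $p$ together with a sequence of pairwise separated circles shrinking to $p$, each a periodic orbit with period tending to $0$: every continuum is a point or an arc of a single circle, and a direct check (using your own observation that $t=0$ forces $\diam(A)<\delta$) shows this flow is $cw$-expansive, yet $p$ is accumulated by regular orbits. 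So the ``only finitely many large loops'' and ``Hausdorff limit'' ideas cannot work in this generality; the periodic case is not merely delicate but an actual obstruction to the statement as written. Be aware that the paper's proof silently has the same gap: it never verifies that its arc $A=X^{[0,1]}(x)$ fails to be contained in $X^{(-1,1)}(x)$, which is exactly what goes wrong when $x$ is periodic of period less than $2$. Your proposal is therefore incomplete, but the incompleteness is inherited from the statement and the paper's own argument rather than being a defect of your strategy in the non-periodic case.
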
 
\begin{proof} The proof goes by contradiction. Let $X^t(p)=p$ for all $t\in\mathbb{R}$, and suppose $p$ is acumulated by regular orbits. Let $\epsilon=1$ and let $\delta>0$ be the corresponding number satisfying the $cw$-expansive definition. Define
$$h(t)=\left\{\begin{array}{rc} 
t+1, \ \mbox{if} \ t\notin(-2,1) \\
2t, \ \mbox{if} \ t\in [0,1) \\
\frac{t}{2},  \ \mbox{if} \ t\in(-2,0)
\end{array}\right. \,.
$$  

By continuity of the flow respect to initial data, there is $x\in M-\{p\}$ such that if $|t|<3$ then $d(X^t(x),p)<\frac{\delta}{2}$. If $A=X^{[0,1]}(x)$, then $A$ is a continuum. For each $t\in[0,1]$ define  $h_{t}:\mathbb{R}\rightarrow\mathbb{R}$ by
$$ h_{t}=(1-t)Id+th.$$

Now define $\alpha: A \to \Hom(\mathbb{R},0)$ by $\alpha(X^t(x))= h_t$.
Then $\alpha\in \cH(A)$, $X_\alpha^s(A)\subset B_{\frac{\delta}{2}}(x)$ if $s\in (-2,1)$ and this set reduces to a unique point if $s\notin (-2,1)$. Therefore, $\diam(\mathcal{X}_\alpha^s(A))<\delta$ $\forall s\in\mathbb{R}$, contradicting that $X^t$ is $cw$-expansive.

\end{proof}

\bigskip
\noindent Let
$ \quad \Sigma_\mathbb{R}(0)=\{\{x_i\}_{i\in\mathbb{Z}}; x_i\in\mathbb{R} \ \mbox{and} \ x_0=0\}.
$
If $A\subset M$ define

\begin{eqnarray*} \SQ(A)=\{\beta:A\rightarrow\Sigma_\mathbb{R}(0); \exists \,\, x_\beta \ \mbox{such that} \ \beta(x_\beta)_i\rightarrow\infty \ \mbox{when} \ i\rightarrow\infty \ \mbox{and} \\ \beta(x_\beta)_i\rightarrow-\infty \ \mbox{when} \ i\rightarrow-\infty\}.
\end{eqnarray*}
Let $\SQ^*(A)\subset \SQ(A)$ so that if $\beta\in \SQ^*(A)$, then for each  $i\in\mathbb{Z}$ the map 
\begin{eqnarray*} f_i:A\rightarrow\mathbb{R}, \quad \quad
f_i(a)=\beta(a)_i \quad \mbox{is continuous}.
\end{eqnarray*}
%is continuous.
If $\beta\in \SQ^*(A)$ and $i\in\mathbb{Z}$ define $\mathcal{X}_\beta^i(A)=\{X^{\beta(x)(i)}(x); \ x\in A\}$.

\bigskip

\begin{thm} Let $X^t$ be a flow without fixed points. The following properties are equivalent:
\begin{enumerate}
\item[(1)] $X^t$ is $cw$-expansive;
\item [(2)] $\forall\,\,\eta>0$ there is $\delta>0$ so that if $A$ is a continuum and there is $\alpha\in \cH(A)$ with $\diam(\mathcal{X}^t_\alpha(A))<\delta$ $\forall t\in\mathbb{R}$, then $A$ is an orbit segment inside $B_\eta(x_\alpha)$;
\item[(3)] $\forall\epsilon>0$ there is $\delta>0$ such that if $A$ is a continuum and exists $\beta\in SQ^*(A)$, with $\beta(x_\beta)_{i+1}-\beta(x_\beta)_{i}\leq \eta$ and $\sup_{a\in A}|\beta(a)_{i+1}-\beta(a)_{i}|\leq \delta$ $\forall i\in\mathbb{Z}$, such that $\diam(\mathcal{X}^i_\beta(A))<\delta$ $\forall i\in\mathbb{Z}$, then $A\subset X^{(-\epsilon,\epsilon)}(x_\beta)$.
\end{enumerate}
\end{thm}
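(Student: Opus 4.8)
We sketch the argument we would use. The plan is to prove $(1)\Rightarrow(2)$, $(2)\Rightarrow(1)$ and $(1)\Leftrightarrow(3)$, the last being the ``discretized'' reformulation. Two elementary facts about a fixed-point-free continuous flow on a compact space would be used throughout. First, by uniform continuity of the flow on $[-1,1]\times M$, if we put $\rho(\sigma):=\sup\{d(X^s(x),x):x\in M,\ |s|\le\sigma\}$ for $\sigma\in(0,1]$, then $\rho(\sigma)\to 0$ as $\sigma\to 0^+$; in particular $X^{(-\sigma,\sigma)}(x)\subset B_\eta(x)$ for all $x$ once $\rho(\sigma)<\eta$. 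Second, since $X^t$ has no fixed points and $M$ is compact, the periods of periodic orbits are bounded below by some $\tau_*>0$ --- otherwise a sequence of periodic points with periods tending to $0$, made convergent by compactness, would produce a fixed point after reducing a given $t$ modulo the periods. Hence, for $\epsilon<\tau_*/2$, $t\mapsto X^t(x)$ is injective on $(-\epsilon,\epsilon)$ for every $x$, so $X^{(-\epsilon,\epsilon)}(x)$ is an embedded arc and every compact connected subset of it containing $x$ is an orbit segment $X^{[c,d]}(x)$ with $-\epsilon<c\le 0\le d<\epsilon$. We also use that $x_\alpha\in \mathcal X^0_\alpha(A)=A$ for $\alpha\in\cH(A)$ and $x_\beta\in A$ for $\beta\in\SQ^*(A)$.

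For $(1)\Leftrightarrow(2)$: given $\eta>0$, I would choose $\epsilon<\tau_*/2$ with $X^{(-\epsilon,\epsilon)}(x)\subset B_\eta(x)$ for all $x$, take $\delta=\delta(\epsilon)$ from $cw$-expansiveness, and note that $\diam(\mathcal X^t_\alpha(A))<\delta$ for all $t$ then forces $A\subset X^{(-\epsilon,\epsilon)}(x_\alpha)\subset B_\eta(x_\alpha)$, so by the second fact $A$ is an orbit segment inside $B_\eta(x_\alpha)$. Conversely, given $\epsilon>0$, a short compactness argument (negate; extract a convergent subsequence of base points; pass to the limit as above) produces $\eta=\eta(\epsilon)>0$ such that any orbit segment $X^{[c,d]}(x)$ with $c\le 0\le d$ lying in $B_\eta(x)$ lies in $X^{(-\epsilon,\epsilon)}(x)$; applying $(2)$ with this $\eta$, and using that the resulting orbit segment passes through $x_\alpha$, yields $A\subset X^{(-\epsilon,\epsilon)}(x_\alpha)$.

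For $(1)\Rightarrow(3)$ I would convert the discrete data to continuous data by interpolation. Given $A$ and $\beta\in\SQ^*(A)$ as in $(3)$, set $s_i:=\beta(x_\beta)_i$; by the gap hypotheses $(s_i)$ is strictly increasing, tends to $\pm\infty$, and has $s_{i+1}-s_i\le\eta$, so it partitions $\RR$ into cells of length $\le\eta$. Define $\alpha(x)\in\Hom(\RR,0)$ by affine interpolation through the nodes $(s_i,\beta(x)_i)$: then $\alpha(x)(0)=\beta(x)_0=0$, the maps $x\mapsto\alpha(x)(t)$ are continuous since the $f_i(\cdot)=\beta(\cdot)_i$ are, and $\alpha(x_\beta)=\mathrm{id}_\RR$, so $\alpha\in\cH(A)$ with $x_\alpha=x_\beta$. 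If $t$ lies in the $i$-th cell then $\alpha(x)(t)$ is between $\beta(x)_i$ and $\beta(x)_{i+1}$, so $X^{\alpha(x)(t)}(x)=X^r(X^{\beta(x)_i}(x))$ with $|r|\le\delta$ and $d(X^{\alpha(x)(t)}(x),X^{\beta(x)_i}(x))\le\rho(\delta)$, whence $\diam(\mathcal X^t_\alpha(A))\le\diam(\mathcal X^i_\beta(A))+2\rho(\delta)<\delta+2\rho(\delta)$; taking $\delta$ small enough that $\delta+2\rho(\delta)$ is below the $cw$-expansiveness constant for $\epsilon$ yields $A\subset X^{(-\epsilon,\epsilon)}(x_\alpha)=X^{(-\epsilon,\epsilon)}(x_\beta)$. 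For $(3)\Rightarrow(1)$ I would sample a continuous reparametrization adaptively. Given $A$ and $\alpha\in\cH(A)$ with $\diam(\mathcal X^t_\alpha(A))<\delta$ for all $t$, where $\delta$ is the constant furnished by $(3)$: since $A$ is connected and $a\mapsto\alpha(a)(1)$ is continuous, never $0$ and equal to $1$ at $x_\alpha$, every $\alpha(a)$ is orientation preserving, and then squeezing $\alpha(a_n)(t_n)$ between $\alpha(a_n)(t-\sigma)$ and $\alpha(a_n)(t+\sigma)$ and letting $\sigma\to 0$ shows that $(a,t)\mapsto\alpha(a)(t)$ is jointly continuous, hence $\{\alpha(a):a\in A\}$ is equicontinuous on compact intervals. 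Set $p_0=0$ and let $p_{i+1}$ be the largest point of $(p_i,p_i+\eta]$ with $\sup_{a\in A}|\alpha(a)(p_{i+1})-\alpha(a)(p_i)|\le\delta$ (this supremum tends to $0$ as $p_{i+1}\to p_i^+$, so such a point exists), define $p_{-i}$ symmetrically, and note that equicontinuity forces $p_i\to+\infty$ and $p_{-i}\to-\infty$. Then $\beta(x)_i:=\alpha(x)(p_i)$ defines an element of $\SQ^*(A)$ with $x_\beta=x_\alpha$ satisfying all hypotheses of $(3)$ --- reference gaps $p_{i+1}-p_i\in(0,\eta]$, consecutive increments $\le\delta$, and $\diam(\mathcal X^i_\beta(A))=\diam(\mathcal X^{p_i}_\alpha(A))<\delta$ --- so $(3)$ gives $A\subset X^{(-\epsilon,\epsilon)}(x_\beta)=X^{(-\epsilon,\epsilon)}(x_\alpha)$, i.e.\ $cw$-expansiveness.

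The delicate point, and the one I expect to be the main obstacle, is the passage between continuous reparametrizations and bi-infinite sampling sequences. In $(1)\Rightarrow(3)$ one must be certain that the affine interpolants are genuine elements of $\Hom(\RR,0)$, i.e.\ that the quantitative gap conditions in $(3)$ really do force strict monotonicity and escape to $\pm\infty$ of $(\beta(x)_i)_i$ for \emph{every} $x\in A$, not just for the distinguished point $x_\beta$; this is exactly where the precise bookkeeping of the constants in $(3)$ must be done carefully. In $(3)\Rightarrow(1)$ the symmetric difficulty is to choose the sampling mesh so that the reference gaps, the suprema over the continuum $A$ of the increments, and the smallness of $\diam(\mathcal X^i_\beta(A))$ hold simultaneously while the mesh stays proper; this relies on the joint continuity of $(a,t)\mapsto\alpha(a)(t)$, which uses both the monotonicity of the reparametrizations and the connectedness of $A$. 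The remaining bookkeeping --- that the constructed objects lie in $\cH(A)$ resp.\ $\SQ^*(A)$ and that distinguished points are preserved --- is routine, and $(1)\Leftrightarrow(2)$ is elementary modulo the two compactness facts.
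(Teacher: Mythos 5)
Your proposal is correct and follows essentially the same route as the paper: $(1)\Leftrightarrow(2)$ via the two compactness facts for fixed-point-free flows, $(1)\Rightarrow(3)$ by piecewise-affine interpolation through the sampling nodes, and $(3)\Rightarrow(1)$ by adaptively sampling the continuous reparametrization so that consecutive increments stay below $\delta$. The points you flag as delicate (strict monotonicity of the interpolants and properness of the sampling mesh) are in fact handled more carefully in your sketch than in the paper's own proof, which passes over them silently.
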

\begin{proof} $(1) \Rightarrow (3):$ Let $\epsilon>0$ be given and $\delta>0$ by $cw$-expansivity property. Choose $\eta>0$ such that 
$$\eta+(2\sup_{z\in M, |u|<\eta}d(z,X^u(z)))<\frac{\delta}{2}. $$
Suppose $A$ is a continuum and that for some $\beta\in \SQ^*(A)$, with $\beta(x_\beta)_{i+1}-\beta(x_\beta)_{i}\leq \eta$ and $\sup_{a\in A}|\beta(a)_{i+1}-\beta(a)_{i}|\leq \delta$  $\forall i\in\mathbb{Z}$, we have $\diam(\mathcal{X}^i_\beta(A))<\eta$, $\forall i\in\mathbb{Z}$.  For each $a\in A$ we define a homeomorphism $\alpha(a)$ by $\alpha(a)(\beta(x_\beta)_i)=\beta(a)_i$, $\forall i\in\mathbb{Z}$ and by linearity on each interval $(\beta(x_\beta)_i,\beta(x_\beta)_{i+1})$. Then $\alpha\in \cH(A)$. 

Therefore, if $t_i=\beta(x_\beta)_i$, for each $t\in[t_i,t_{i+1})$,
\begin{eqnarray*} \diam(\mathcal{X}_\beta^t(A))&=&\sup_{a,b\in A} d(X^{\beta(a)(t)}(a),X^{\beta(b)(t)}(b)) \\
&\leq&\sup_{a,b\in A} (d(X^{\beta(a)(t)}(a),X^t(x_\beta))+d(X^t(x_\beta),X^{\beta(b)(t)}(b))) \\
&\leq&\sup_{a\in A}d(X^{\beta(a)(t)}(a),X^t(x_\beta))+\sup_{b\in A}d(X^t(x_\beta),X^{\beta(b)(t)}(b))) \\
&=&2\sup_{a\in A}d(X^t(x_\beta), X^{\beta(a)(t)}(a))\,.
\end{eqnarray*} 

But, for each $a\in A$ we have that
\begin{eqnarray*} d(X^t(x_\beta), X^{\beta(a)(t)}(a))&\leq&d(X^t(x_\beta),X^{t_i}(x_\beta))+d(X^{t_i}(x_\beta),X^{\beta(a)_i}(a)) \\ &+& d(X^{\beta(a)_i}(a),X^{\beta(a)(t)}(a)) \\
&\leq& \sup_{z\in M, |u|\leq\eta}d(z,X^u(z))+\eta+\sup_{z\in M, |u|\leq\eta}(z,X^u(z)) \\
&<& \frac{\delta}{2}\,.
\end{eqnarray*}
Therefore,
\begin{eqnarray*} \diam(\mathcal{X}_\beta^t(A))< 2\frac{\delta}{2}=\delta,\quad \forall t\, \in\mathbb{R},
\end{eqnarray*}
and since $X^t$ is $cw$-expansive we obtain $A\subset X^{(-\epsilon,\epsilon)}(x_\beta)$.
\vspace{0.2cm}

$(3)\Rightarrow (1)$: Let $\epsilon>0$ be given and $\delta>0$ as in item $(3)$. Suppose $A\subset M$ is a continuum such that for some $\alpha\in \cH(A)$ it holds $\diam(\mathcal{X}^t_\alpha(A))<\delta$ $\forall t \in\mathbb{R}$. By induction, define $\beta\in \SQ^*(A)$ by $\beta(x)_0=0$ $\forall x\in A$. Since $A$ is compact there is $t_1>0$ such that $\alpha(x)(t_1)<\delta$ $\forall x\in A$. Define $\beta(x)_1=\alpha(x)(t_1)$. There is $t_{i+1}>t_i$ such that $|\alpha(x)(t_{i+1})-\alpha(x)(t_i)|<\delta$ $\forall x\in A$, define then $\beta(x)_{i+1}=\alpha(x)(t_{i+1})$. We can define $\beta(x)_i$ for $i<0$ similarly. Therefore $\beta\in \cH(A)$, with $x_\beta=x_\alpha$. By item $(3)$  $A\subset X^{(-\epsilon,\epsilon)}(x_\alpha)$. \vspace{0.2cm}    

$(1)\Rightarrow (2)$: Since $M$ is compact, $\forall \eta>0$ there is $\epsilon>0$ such that $X^{(-\epsilon,\epsilon)}(x)\subset B_\eta(x)$ $\forall x\in M$.\vspace{0.2cm}

$(2)\Rightarrow(1)$:  Since $X^t$  has no fixed points, $\forall\epsilon>0$ there is $\eta>0$ so that $\diam(X^{(-\epsilon,\epsilon)}(x))>\eta$ $\forall x\in M$. 
\end{proof}

Recall that if $M$ and $N$ are metric spaces,
the flows $X^t: M \to M$ and $Y^t:N \to N$ are \emph{conjugate} if there is a homeomorphism  $h:M\to N$ mapping orbits of $X^t$ onto orbits of $Y^t$.

\begin{thm} $cw$-expansiviness is a conjugacy invariant. 
\end{thm}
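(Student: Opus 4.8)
The plan is to prove that $cw$-expansiveness transfers across a conjugacy $h : M \to N$, so that if $X^t$ on $M$ is $cw$-expansive then so is $Y^t$ on $N$. The essential point is that $cw$-expansiveness is phrased purely in terms of continua, diameters, and orbit segments, all of which behave well under a homeomorphism that maps orbits to orbits. The one technical nuisance is that a conjugacy $h$ need not respect the time parametrization: it sends the orbit $\cO(x)$ of $X^t$ onto the orbit $\cO(h(x))$ of $Y^t$, but $h(X^t(x))$ need not equal $Y^t(h(x))$. Nevertheless, for each $x \in M$ there is a continuous time change $\tau_x \in \Hom(\mathbb{R},0)$ with $h(X^t(x)) = Y^{\tau_x(t)}(h(x))$, and the family $x \mapsto \tau_x$ depends continuously on $x$ (this is standard, e.g. as in \cite{BW}).

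First I would fix $\epsilon > 0$ and, using uniform continuity of $h^{-1}$ on the compact space $N$ together with the fact that $h$ maps $Y$-orbit segments through $h(x)$ onto $X$-orbit segments through $x$, choose $\epsilon' > 0$ so that $h^{-1}(Y^{(-\epsilon',\epsilon')}(y)) \subset X^{(-\epsilon,\epsilon)}(h^{-1}(y))$ for all $y \in N$; symmetrically choose $\epsilon'' > 0$ so that $h(X^{(-\epsilon'',\epsilon'')}(x)) \subset Y^{(-\epsilon,\epsilon)}(h(x))$ for all $x$. Apply $cw$-expansiveness of $X^t$ with the constant $\epsilon''$ to obtain $\delta_X > 0$. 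Then, using uniform continuity of $h$, pick $\delta > 0$ with $\diam_N(S) < \delta \Rightarrow \diam_M(h^{-1}(S)) < \delta_X$ for every $S \subset N$.

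Next I would show this $\delta$ witnesses $cw$-expansiveness of $Y^t$ at level $\epsilon$. Suppose $B \subset N$ is a continuum and $\gamma \in \cH(B)$ with $\diam(\cY^t_\gamma(B)) < \delta$ for all $t$. Set $A = h^{-1}(B)$, a continuum since $h^{-1}$ is a homeomorphism. For $a \in A$ define $\alpha(a) \in \Hom(\mathbb{R},0)$ by $\alpha(a) = \tau_a^{-1}\circ \gamma(h(a)) \circ \tau_a$ — more precisely one checks that $h(X^{\alpha(a)(t)}(a)) = Y^{\gamma(h(a))(\tau_a(t))}(h(a))$, so that $h(\cX^{s}_\alpha(A))$ equals $\cY^{\,\cdot\,}_\gamma(B)$ reparametrized; the key computation is that the point $X^{\alpha(a)(t)}(a)$ lies on the orbit of $a$ and is mapped by $h$ into the slice $\cY^{t'}_\gamma(B)$ for the appropriate $t'$. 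Using that $h$ sends $x_\gamma$'s preimage to play the role of $x_\alpha$ and that $\gamma(h(x_\alpha)) = id$ forces $\alpha(x_\alpha) = id$, one verifies $\alpha \in \cH(A)$. Then $h(\cX^s_\alpha(A)) \subset \cY^{t}_\gamma(B)$ for some $t = t(s)$, hence $\diam(\cX^s_\alpha(A)) = \diam(h^{-1}(h(\cX^s_\alpha(A)))) < \delta_X$ for all $s$, and $cw$-expansiveness of $X^t$ gives $A \subset X^{(-\epsilon'',\epsilon'')}(x_\alpha)$. Applying $h$ yields $B = h(A) \subset h(X^{(-\epsilon'',\epsilon'')}(x_\alpha)) \subset Y^{(-\epsilon,\epsilon)}(h(x_\alpha)) = Y^{(-\epsilon,\epsilon)}(x_\gamma)$, as required. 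By symmetry (replacing $h$ by $h^{-1}$) the converse implication holds too, so $cw$-expansiveness is a conjugacy invariant.

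The main obstacle I expect is the bookkeeping around the time reparametrization $\tau_x$: one must confirm that $\tau_x$ can be chosen continuously in $x$ and that the composition $\alpha(a) = \tau_a^{-1}\circ \gamma(h(a))\circ \tau_a$ indeed lands in $\cH(A)$, i.e. that $a \mapsto \alpha(a)(t)$ is continuous for each fixed $t$ and that $\alpha$ fixes the origin with the distinguished point $x_\alpha = h^{-1}(x_\gamma)$ sent to the identity. Once the continuity of the time change is granted (which is where one invokes the structure of conjugacies of flows, as in \cite{BW}), the diameter estimates are immediate from uniform continuity of $h^{\pm 1}$ on the compact spaces, and nothing deeper is needed.
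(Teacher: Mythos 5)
Your overall strategy is the same as the paper's: pull the continuum and the reparametrizing family across the conjugacy, use uniform continuity of $h^{\pm 1}$ on the compact spaces to convert the diameter hypothesis, and push the conclusion back through $h$. (You transfer $cw$-expansiveness from $X^t$ to $Y^t$ by pulling back with $h^{-1}$; the paper assumes $Y^t$ is $cw$-expansive and pushes forward with $h$, which is the same argument by symmetry of conjugacy.) There is, however, one concrete slip in your construction of $\alpha$.

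With $\alpha(a)=\tau_a^{-1}\circ\gamma(h(a))\circ\tau_a$ you get, as you write, $h(X^{\alpha(a)(s)}(a))=Y^{\gamma(h(a))(\tau_a(s))}(h(a))$. The inner time change $\tau_a(s)$ depends on $a$, so the image $h(\cX^s_\alpha(A))$ picks, for each $a$, a point of a \emph{different} slice $\cY^{\tau_a(s)}_\gamma(B)$; it is not contained in a single slice $\cY^{t(s)}_\gamma(B)$, and the needed bound $\diam(\cX^s_\alpha(A))<\delta_X$ does not follow from $\diam(\cY^t_\gamma(B))<\delta$ for each fixed $t$, since the slices may drift along the flow direction between the various times $\tau_a(s)$. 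The repair is to conjugate by a single, $a$-independent homeomorphism on the inside: set $x_0=h^{-1}(x_\gamma)$ and $\alpha(a)=\tau_a^{-1}\circ\gamma(h(a))\circ\tau_{x_0}$, equivalently define $\alpha(a)(s)$ by $h(X^{\alpha(a)(s)}(a))=Y^{\gamma(h(a))(\tau_{x_0}(s))}(h(a))$. Then $h(\cX^s_\alpha(A))=\cY^{\tau_{x_0}(s)}_\gamma(B)$ exactly, every slice of $\cY_\gamma(B)$ is realized as $s$ runs over $\RR$, and $\alpha(x_0)=\tau_{x_0}^{-1}\circ id\circ\tau_{x_0}=id$, so $\alpha\in\cH(A)$ with $x_\alpha=x_0$. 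For what it is worth, the paper's implicit definition $\alpha_N(h(x))=\tau_x\circ\alpha_M(x)$ has the dual minor defect: the slices match but the guide point carries $\tau_{x_{\alpha_M}}$ rather than the identity, which is likewise cured by precomposing with one fixed element of $\Hom(\RR,0)$. Your identification of the continuity of $x\mapsto\tau_x$ as the remaining point needing care is correct and is exactly what the paper also takes for granted.
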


\begin{proof}  Suppose $X^t$ and $Y^t$ are conjugate with $Y^t$ $cw$-expansive. Let $h:M\rightarrow N$ be a homeomorphism conjugating $X^t$ to $Y^t$. Let $\epsilon_M>0$ be given and $\epsilon_N>0$ such that if $x,y\in N$ satisfies $d(x,y)<\epsilon_N$, then $d(h^{-1}(x),h^{-1}(y))<\epsilon_M$. Let $\delta_N>0$ be the corresponding number given by $cw$-expansivity of $Y^t$ to $\epsilon_N$ and   $\delta_M>0$ such that if $x,y\in M$ with $d(x,y)<\delta_M$, then $d(h(x),h(y))<\delta_N$. Therefore, if $A_M\subset M$ is a continuum and $\alpha_M\in \cH(A_M)$ is such that $\diam(\mathcal{X}^t_{\alpha_M}(A_M))<\delta_M$ $\forall t\in\mathbb{R}$ we get that $A_N=h(A_M)\subset N$ is a continuum. 
For every $x\in A_M$ and $t\in\mathbb{R}$ let $\alpha_N(h(x))(t)$ be the real number such that if
$$h(X^{\alpha_M(x)(t)}(x))=Y^{\alpha_N(h(x))(t)}(h(x))$$
then $\alpha_N\in \cH(A_N)$. Furthermore, for every $t\in\mathbb{R}$ we have:
\begin{eqnarray*} \diam(\mathcal{Y}^t_{\alpha_N}(A_N))&=&\max_{x,y\in A_N}d(Y^{\alpha_N(x)(t)}(x),Y^{\alpha_N(y)(t)}(y)) \\
&=& \max_{x,y\in A_M}d(h(X^{\alpha_M(x)(t))}(x)),h(X^{\alpha_M(y)(t))}(y))) \\
&=& \diam(h(\mathcal{X}^t_{\alpha_M}(A_M))) <\delta_N,\\
\end{eqnarray*}
because  $\diam(\mathcal{X}^t_{\alpha_M}(A_M))<\delta_M$. 
Since $Y^t$ is $cw$-expansive, $A_N$ is an $Y^t$-orbit segment inside $B_{\epsilon_N}(x_{\alpha_N})$. But $A_M=h^{-1}(A_N)$, $x_{\alpha_N}=h(x_{\alpha_M})$ and the choice of $\epsilon_N$ imply that $A_M$ is a $X^t$-orbit segment inside $B_{\epsilon_M}(x_{\alpha_M})$, proving that $X^t$ is $cw$-expansive.   

\end{proof}
\section{Suspensions}

Let $(M,d)$ be a compact metric space and
 $f:M\rightarrow M$ a homeomorphism. Let $k:M\rightarrow\mathbb{R}^+$ be a continuous function. 

\begin{defi}The \emph{suspension} of $f$ under $k$ is the flow $X^t$ on the space 
$$M_k=\bigcup_{0\leq t\leq k(y)}(y,t)/(y,k(y))(k(y),0)$$
defined for small nonnegative time by $X^t(y,s)=(t+s)$, $0\leq t+s< k(y)$. 
\end{defi}
Each suspension of $f$ is conjugate to the suspension of $f$ under $1$, the constant function with value $1$. For this reason we shall concentrate on suspensions under the function $1$. 

Next, following \cite{BW}, we define a metric on $M_1$. Suppose that the diameter of $M$ under $d$ is less than $1$. 

Consider the subset $M\times\{t\}$ of $M\times[0,1]$ and let $d_t$ denote the metric defined by $d_t((y,t),(z,t))=(1-t)d(y,t)+td(f(y),f(z))$, $y,z\in M$. 
Given $x_1,x_2\in M_1$, consider all finite chains $x_1=w_0,w_1,...,w_n=x_2$ between $x_1$ and $x_2$ where, for each $i$, either $w_i$ and $w_{i+1}$ belong to $M\times{t}$ for some $t$ (in wich case we call $[w_i,w_{i+1}]$ a horizontal segment) or $w_i$ and $w_{i+1}$ are on the same orbit (and then we call $[w_i,w_{i+1}]$ a vertical segment). 

Define the length of a chain as the sum of the lengths of its segments, where the length of a horizontal segment $[w_i,w_{i+1}]$ is measured in the metric $d_t$ if $w_i$ and $w_{i+1}$ belongs to $M\times\{t\}$, and the length of a vertical segment $[w_i,w_{i+1}]$ is the shortest distance between $w_i$ and $w_{i+1}$ along the orbit (ignoring the direction of the orbit) using the usual metric on $\mathbb{R}$. 

If $w_i\neq w_{i+1}$ and $w_i$ and $w_{i+1}$ are on the same orbit and on the same set $M\times\{t\}$ then the length of the segment $[w_i,w_{i+1}]$ is taken as $d_t(w_i,w_{i+1})$, since this is always less than $1$. 

Then define $d(x_1,x_2)$ to be the infimum of the lengths of all chains between $x_1$ and $x_2$. It is easy to see that $d$ is a metric on $M_1$.
This metric $d$ gives the topology on $M_1$.

\begin{thm} \label{th-sus} Let $\phi:Y\rightarrow Y$ be a homeomorphism and $f:M\rightarrow\mathbb{R}^+$ a continuous map. The suspension of $\phi$ under $f$ is $cw$-expansive if and only if $\phi$ is $cw$-expansive.
\end{thm}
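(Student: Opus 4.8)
The statement is an equivalence, so I would prove the two implications separately, and in each direction exploit the correspondence between continua in $M_1$ and continua in $M$ together with the relation between the dynamics. The key geometric fact I would use (and state as a small preliminary observation) is that the projection $\pi:M_1\to M$ along the flow, and its local cross-sectional inverses $M\times\{0\}\hookrightarrow M_1$, are Lipschitz or at least uniformly continuous with respect to the Bowen--Walters metric $d$ on $M_1$ constructed above; in particular, horizontal distances in $M_1$ at height $0$ agree with $d$ on $M$, and a continuum $A\subset M_1$ can be ``straightened'' along orbit segments. I expect the technical heart of the argument to be translating the shearing data: an $\alpha\in\cH(A)$ for the suspension flow must be converted into the analogous reparametrization datum for the iterates of $\phi$, and conversely, keeping track of the integer parts of the time functions (which record how many times $\phi$ is applied) versus the fractional parts (which record the position within a flow box).

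\textbf{($\Leftarrow$) Suppose $\phi$ is $cw$-expansive; show the suspension $X^t$ is $cw$-expansive.} Given $\epsilon>0$ for the flow, I would first use that $X^t$ has no fixed points (the return time is positive) to invoke the equivalent formulation (3) of the previous theorem: it suffices to control continua under discrete reparametrizations $\beta\in\SQ^*(A)$ with bounded increments. Choosing $\eta$ small, any continuum $A\subset M_1$ with $\diam(\cX^t_\alpha(A))<\delta$ for all $t$ is, after flowing each point to height $0$, carried into a small continuum $A_0\subset M\times\{0\}\cong M$; the condition that the sheared copies stay $\delta$-close for all real times forces, evaluating at the integer return times, that the $\phi$-iterates $\phi^n(A_0)$ together with an induced reparametrization stay $\delta'$-close for all $n\in\ZZ$. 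Here the induced map into $\Hom(\RR,0)$ for $\phi$ comes from reading off, for each $a\in A_0$, the cumulative return times; continuity of the return-time cocycle gives membership in $\cH(A_0)$ (or its discrete analogue $\SQ^*$). Applying $cw$-expansivity of $\phi$ to $A_0$ with the appropriate small constant yields that $A_0$ is contained in a tiny ball, hence is a single point or has diameter less than a prescribed amount; lifting back, $A$ is forced into $X^{(-\epsilon,\epsilon)}(x_\alpha)$. The main obstacle in this direction is bookkeeping: the reparametrization $\alpha(a)$ of the flow is a homeomorphism of $\RR$ that both moves the basepoint within its flow box and shifts the discrete time; one must extract from it a genuine ``$cw$-expansive datum'' for $\phi$ with uniformly small distortion, and this requires the diameter estimate to be chosen after the return-time function's modulus of continuity.

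\textbf{($\Rightarrow$) Suppose the suspension $X^t$ is $cw$-expansive; show $\phi$ is $cw$-expansive.} This direction is more direct. Given $\epsilon>0$ for $\phi$, pick the corresponding $\epsilon'$ so that a point of $M_1$ whose orbit segment of length $\epsilon'$ stays in a flow box projects to a ball of radius $<\epsilon$ in $M$; let $\delta'$ be the $cw$-expansivity constant of $X^t$ for $\epsilon'$. Given a continuum $A\subset M$ and $\hat\alpha\colon A\to\Hom(\ZZ,0)$-type data (i.e. the discrete analogue witnessing a violation of $cw$-expansivity of $\phi$ with $\sup_n\diam(\cdots)<\delta$), I would embed $A$ as $A\times\{0\}\subset M_1$ and interpolate linearly in the flow direction between consecutive discrete times to produce $\alpha\in\cH(A\times\{0\})$; the bound on the $\phi$-side diameters translates, via the horizontal--vertical chain structure of the Bowen--Walters metric, into $\diam(\cX^t_\alpha(A\times\{0\}))<\delta'$ for all $t\in\RR$, provided $\delta$ was chosen small relative to $\delta'$ and the modulus of continuity of $f$ and $f^{-1}$ (the metric $d_t$ interpolates $d$ and $d\circ f$, so one needs both). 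Then $cw$-expansivity of $X^t$ puts $A\times\{0\}$ inside $X^{(-\epsilon',\epsilon')}(x_\alpha)$; intersecting with $M\times\{0\}$ and using the choice of $\epsilon'$ forces $A$ into $B_\epsilon(x_{\hat\alpha})$ and in fact collapses it as required. The only delicate point here is that points of $A\times\{0\}$ sheared by $\alpha$ may leave the zero section, so one must either re-project or observe that $X^{(-\epsilon',\epsilon')}(x_\alpha)\cap (M\times\{0\})$ is a single point for $\epsilon'<1$, which is immediate from the suspension construction.

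Throughout I would reduce, via the remark that every suspension is conjugate to the suspension under the constant function $1$ together with the already-proved conjugacy invariance of $cw$-expansivity, to the case $f\equiv 1$ with the explicit Bowen--Walters metric, so that all the distance estimates above are with respect to that concrete metric.
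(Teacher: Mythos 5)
Your overall route is the same as the paper's: reduce to $f\equiv 1$ by conjugacy invariance and pass between continua in the base and continua in the zero section of the suspension via the Bowen--Walters metric. The direction ``suspension $cw$-expansive $\Rightarrow\phi$ $cw$-expansive'' is essentially the paper's argument (embed $A\times\{0\}$, use that $d_t$ interpolates $\rho$ and $\rho\circ(\phi\times\phi)$ to get $\diam X^t(A\times\{0\})\le\delta$ for all $t$, and observe that $X^{(-\epsilon,\epsilon)}(x)\cap(M\times\{0\})$ is a single point for $\epsilon<\tfrac12$). But two things need repair. First, you repeatedly attach reparametrization data to the \emph{homeomorphism} (``$\hat\alpha\colon A\to\Hom(\ZZ,0)$-type data'', ``the $\phi$-iterates together with an induced reparametrization''); Kato's $cw$-expansivity for a homeomorphism carries no such data: the hypothesis is just $\diam(\phi^n(A))<\delta$ for all $n$ and the conclusion is that $A$ is a singleton. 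In the easy direction you should simply take $\alpha\equiv\mathrm{id}$; in the hard direction you must show that the projection of $\cX^n_\alpha(A)$ to the base is \emph{exactly} $\phi^n(A_0)$, with no residual reparametrization. Relatedly, your hedge that $A_0$ ``is a single point or has diameter less than a prescribed amount'' is not enough: only $A_0=\{\mathrm{pt}\}$ forces $A$ to be an orbit segment and hence contained in $X^{(-\epsilon,\epsilon)}(x_\alpha)$; a merely small $A_0$ gives a small $A$ that need not lie on one orbit.

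Second, in the direction ``$\phi$ $cw$-expansive $\Rightarrow$ suspension $cw$-expansive'' you correctly flag the bookkeeping as the main obstacle but do not resolve it, and that is where the substance of the proof lies. The projection ``along the flow to height $0$'' is discontinuous at the gluing $M\times\{1\}\sim M\times\{0\}$, so for a continuum $A$ straddling that locus your $A_0$ is not well defined; moreover the integer part of $h_a+\alpha(a)(n)$ need not equal $n$, so the base coordinate of $X^{\alpha(a)(n)}(a)$ could be $\phi^{n\pm1}$ of the base coordinate of $a$ rather than $\phi^{n}$ of it. The paper's fix is twofold: (i) normalize the guide point to height $\tfrac12$ (flow $A$ by a time $|r|<\tfrac12$ and replace $\alpha(x)(t)$ by $\alpha(x)(t+r)-\alpha(x)(r)$) and take the diameter bound $\delta'<\tfrac14$, so that at every integer time the entire sheared continuum sits inside a single level and the projection commutes with $\phi$; (ii) measure the projected diameter in the auxiliary metric $\rho'=\min\{\rho,\rho\circ(\phi\times\phi)\}$, which is dominated by $d_t$ at every height, and take the $cw$-expansivity constant of $\phi$ with respect to $\rho'$ (or invoke uniform equivalence of $\rho$ and $\rho\circ(\phi\times\phi)$). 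Your appeal to characterization (3) of the equivalence theorem is legitimate --- the suspension has no fixed points --- but is not needed once these two devices are in place.
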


\begin{proof}We need only show the result when $f\equiv 1$.
Suppose that $X^t$ is $cw$-expansive. Let $\frac{1}{2}>\epsilon>0$ be given and $\delta>0$ be the corresponding constant determined by  $cw$-expansivity. If $A\subset Y$ is a continuum with $\diam(\phi^n(A))<\delta$ $\forall n\in\mathbb{Z}$, denoting  $A_M=A\times\{0\}$, and $x_1=(x,0)$, then for all $t\in\mathbb{R}$ we have:
\begin{eqnarray*} \diam(X^t(A_M))&=&\max_{x_1,y_1\in A_M}d(X^t(x_1),X^t(y_1)) \\
&\leq& \max_{x,y\in A}\rho_{t-[t]}((\phi^{[t]}(x),t-{[t]}),(\phi^{[t]}(y),t-{[t]})) \\
&=& \max_{x,y\in A}((1-t+[t])\rho(\phi^{[t]}(x),\phi^{[t]}(y))+(t-[t])\rho(\phi^{[t]+1}(x),\phi^{[t]+1}(y))).
\end{eqnarray*} 
Where $[t]$ is the greatest integer less than $t$. But since  for all $ x,y\in A$ it holds
$$\phi^{[t]}(x),\phi^{[t]}(y)\in\phi^{[t]}(A) \ \mbox{and} \ \phi^{[t]+1}(x),\phi^{[t]}(y)\in\phi^{[t]+1}(A)\, ,$$
we get
\begin{eqnarray*} \diam(X^t(A_M))&\leq&(1-t+[t])\delta+(t-[t])\delta=\delta. 
\end{eqnarray*}
Since $X^t$ is $cw$-expansive, there is $(x,0)$ such that $A_M\subset X^{(-\epsilon,\epsilon)}(x)$. Moreover since $0<\epsilon<\frac{1}{2}$ and $A_M\subset M\times\{0\}$ we obtain $A_M=\{(x,0)\}$, and so $A=\{x\}$. Therefore, $\phi$ is $cw$-expansive.

Next suppose that the suspension $\phi$ is  $cw$-expansive. Consider in $M$ the metric given by
$$\rho'(x,y)=\min\{\rho(x,y),\rho(\phi(x),\phi(y))\}$$
and let $\delta>0$ be the  $cw$-expansivity constant to $\rho'$. Let $\epsilon>0$ and $\delta'=\min\{\delta,\epsilon,\frac{1}{4}\}$. 
Let $A\subset M_f$ be a continuum and $\alpha\in \cH(A)$ so that $\diam\mathcal{X}^t_\alpha(A)<\delta'$ $\forall t\in\mathbb{R}$. 
We consider two cases: (1)  $x_\alpha$ can be represented as $(y_1,\frac{1}{2})$ and (2) $x_\alpha$ can be represented as $(\frac{1}{2},y_1)$.

In the first case, define $A_M=\{a\in M;(a,s)\in A \ \mbox{com} \ s\in(0,1) \}$. Then
\begin{eqnarray*}\diam(A_M)&=&\max_{y,z\in A_M}\rho'(y,z) \\
&\leq& \max_{(y,s),(z,r)\in A}d((y,s),(z,r)) \\
&=&\diam(A)<\delta'<\delta.
\end{eqnarray*}  
By definition of suspension we have that $X^{1}(x_\alpha)$ has representation $(\phi(y_1),\frac{1}{2})$, and since $\diam \mathcal{X}^1_\alpha(A)<\delta<\frac{1}{4}$ we get that $X^{\alpha(y)(1)}(y)$ has representation $(\phi(y),s)$ with $s\in(0,1)$ $\forall y\in A$. So,
\begin{eqnarray*}\diam(\phi(A_M))&=&\max_{y,z\in A_M}\rho'(\phi(y),\phi(z)) \\
&\leq& \max_{(\phi(y),s),(\phi(z),r)\in \mathcal{X}_\alpha^1(A)}d((\phi(y),s),(\phi(z),r)) \\
&=&\diam(\mathcal{X}^1_\alpha(A))<\delta'<\delta.
\end{eqnarray*}
Similarly, $\diam(\phi^n(A_M))<\delta$ $\forall n\in\mathbb{Z}$. Since 
$\phi$ is $cw$-expanse  we get $A_M=\{y_1\}$, and so, every point in $A$ has the form $(y_1,t)$ with $t\in(0,1)$. Since $(y_1,\frac{1}{2})=x_\alpha\in A$ and $\diam(A)<\delta'<\epsilon$, we conclude that $|t-\frac{1}{2}|<\epsilon$, ie, $A\subset X^{-\epsilon,\epsilon}(x_\alpha)$, finishing the proof in the first case.
\vspace{0.2cm}

When $x_\alpha$ does not have a representation as $(y_1,\frac{1}{2})$ then  $X^r(x_\alpha)$ has representation as $(y_1,\frac{1}{2})$ for some $|r|<\frac{1}{2}$. Define $\widetilde{A}=\mathcal{X}^r_\alpha(A)$ and for each $x\in A$ and $t\in\mathbb{R}$ set
$$\widetilde{\alpha}(X^r(x))(t)=\alpha(x)(t+r)-\alpha(x)(r).$$
We have that $\widetilde{A}$ is a continuum and $\widetilde{\alpha}\in \cH(\widetilde{A})$. Moreover,  for every $t\in\mathbb{R}$, it holds
\begin{eqnarray*}\diam(\mathcal{X}^t_{\widetilde{\alpha}}(\widetilde{A}))&=&\max_{x,y\in \widetilde{A}}d(X^{\widetilde{\alpha}(x)(t)}(x),X^{\widetilde{\alpha}(y)(t)}(y)) \\
&=&\max_{x,y\in A}d(X^{\alpha(x)(t+r)-\alpha(x)(r)}(X^{\alpha(x)(r)}(x)),X^{\alpha(y)(t+r)-\alpha(y)(r)}(X^{\alpha(y)(r)}(y)) \\
&=&\max_{x,y\in A}d(X^{\alpha(x)(t+r)}(x),X^{\alpha(y)(t+r)}(y)) \\
&=&\diam(\mathcal{X}^{t+r}_\alpha(A))<\delta.
\end{eqnarray*}
By the first case, we obtain 
$$\widetilde{A}\subset X^{(-\epsilon,\epsilon)}(x_{\widetilde{\alpha}})=X^{(-\epsilon,\epsilon)}(X^r(x_\alpha)),$$
and therefore,
$$A=X^{-r}(\widetilde{A})\subset X^{(-\epsilon,\epsilon)}(x_\alpha).$$
All together completes the proof of Theorem \ref{th-sus}.

\end{proof}

\section{Pairs of cross-sections families}\label{sec-pairs}

In this section we consider a $cw$-expansive flow $X^t$ on $M$ and 
define stable and unstable sets for subsets $A \subset M$. 
The goal is to extend to this context \cite[Theorem 2.7]{KS}, establishing that
locally these sets defined for points of $M$ intersect at a unique point. 
To do so we use 
the notation introduced by Keynes and Sears in \cite{KS} and start recalling the definition
of $\delta$-adapted family of cross-sections. \vspace{0.1cm}

A set $S\subset M$ is a \emph{cross-section} of time $\epsilon>0$ if it is closed and for each $x\in S$ we have $S\cap X^{(-\epsilon,\epsilon)}(x)=\{x\}$. The \emph{interior} of $S$ is the set $S^*=int(X^{(-\epsilon,\epsilon)}(S))\cap S$.  \vspace{0.1cm}

The proof of the next lemma can be found in \cite[Lemma 2.4]{KS} as well in \cite[Lemma 7]{BW}.

\begin{lemma}\label{lst} There is $\epsilon>0$ such that for each $\delta>0$ we can find a pair $(\mathcal{S},\mathcal{T})$ of finite families $\mathcal{S}=\{S_1,...,S_n\}$ and $\mathcal{T}=\{T_1,...,T_n\}$ of local cross-sections of time $\epsilon>0$ and diameter at most $\delta$ with $T_i\subset S_i^*$ ($i\in\{1,...,k\}$) such that
$$M=\bigcup_{i=1}^{k} X^{[0,\epsilon]}(T_i)=\bigcup_{i=1}^{k} X^{[-\epsilon,0]}(T_i)=\bigcup_{i=1}^{k} X^{[0,\epsilon]}(S_i)=\bigcup_{i=1}^{k} X^{[-\epsilon,0]}(S_i).$$   
\end{lemma}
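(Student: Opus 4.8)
The statement is a flow-box covering lemma of the kind proved in \cite[Lemma 2.4]{KS} and \cite[Lemma 7]{BW}, and since the excerpt explicitly defers to those references, the plan is to recall the construction there rather than reinvent it. First I would fix $\epsilon>0$ small enough that flow boxes of time $\epsilon$ are well defined everywhere: by compactness of $M$ and continuity of the flow $X^t$ (with no fixed points causing trouble here, since we work only with the local flow-box structure), there is $\epsilon>0$ such that through every $x\in M$ there passes a local cross-section, i.e. a closed set $S$ with $S\cap X^{(-\epsilon,\epsilon)}(x)=\{x\}$, and such that $X^{(-3\epsilon,3\epsilon)}$ restricted to that section is a homeomorphism onto an open neighborhood of $x$. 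This is the standard tube/flow-box lemma for continuous flows on compact metric spaces.

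\textbf{Main construction.} Given $\delta>0$, for each $x\in M$ choose a local cross-section $S_x$ of time $\epsilon$, of diameter less than $\delta$, containing $x$ in its interior $S_x^*$ in the sense of the definition just given, and then a smaller cross-section $T_x\subset S_x^*$ still containing $x$ in its own interior, also of diameter less than $\delta$ (this nesting is arranged by shrinking $S_x$ around $x$). The sets $X^{(-\epsilon,\epsilon)}(\inter(X^{(-\epsilon,\epsilon)}(T_x))\cap T_x)$, as $x$ ranges over $M$, form an open cover of $M$; by compactness extract a finite subcover, giving finitely many indices $1,\dots,k$ with sections $S_1,\dots,S_k$ and $T_1,\dots,T_k$, $T_i\subset S_i^*$, of diameter at most $\delta$, such that $M=\bigcup_{i=1}^k X^{(-\epsilon,\epsilon)}(T_i^*)$. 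Then one upgrades this to the four displayed equalities: replacing the open interval $(-\epsilon,\epsilon)$ by the closed $[-\epsilon,\epsilon]$ and then splitting $[-\epsilon,\epsilon]=[-\epsilon,0]\cup[0,\epsilon]$, one checks $M=\bigcup X^{[0,\epsilon]}(T_i)=\bigcup X^{[-\epsilon,0]}(T_i)$; the two equalities with $S_i$ in place of $T_i$ follow from $T_i\subset S_i$. A point of care is that a single time window of length $\epsilon$ might fail to cover a given $x$ from the positive side alone, so one genuinely uses both $[0,\epsilon]$ and $[-\epsilon,0]$; this is why the lemma asserts all four unions simultaneously rather than just one.

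\textbf{Expected main obstacle.} The one delicate point is establishing the basic flow-box / tubular-neighborhood lemma in the purely topological, non-smooth setting: that every point of a compact metric space carrying a continuous flow without fixed points admits a local cross-section through which the flow is a product for a uniform time $\epsilon$. In the smooth case this is immediate from the implicit function theorem, but here one must argue topologically (this is exactly the content of \cite[Lemma 7]{BW}), using compactness to get the uniformity of $\epsilon$. Once that is in hand, the rest is a routine compactness-and-covering argument, and since the paper cites \cite{KS} and \cite{BW} for the whole lemma, I would simply invoke those references for the details and only sketch the covering step as above.
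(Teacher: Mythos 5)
Your proposal is correct and follows the same route as the paper, which in fact gives no proof of its own and simply cites \cite[Lemma 2.4]{KS} and \cite[Lemma 7]{BW} --- exactly the flow-box-plus-compactness construction you sketch (topological cross-sections through every point of a fixed-point-free flow, uniform time $\epsilon$ by compactness, nested sections $T_x\subset S_x^*$, finite subcover). The only detail worth tightening is the last step: from $M=\bigcup_i X^{(-\epsilon,\epsilon)}(T_i^*)$ a point is only guaranteed to be reached within time $2\epsilon$ from one prescribed side, so one should cover $M$ by boxes of half-width $\epsilon/2$ and apply that cover to $X^{\pm\epsilon/2}(x)$ to obtain each of the four equalities separately with the stated $\epsilon$.
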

  
\begin{defi}\label{adequada}
A pair of families of cross-sections $(\mathcal{S},\mathcal{T})$  as in the previous theorem is called {\emph{$\delta$-adequated}}.
\end{defi}
Given a  pair of   $\delta$-adequated cross sections $(\mathcal{S},\mathcal{T})$ let

\begin{equation}\label{e.theta} 
\theta=\sup\{\delta>0; \forall x\in\bigcup_{i=1}^{k}S_i \ \mbox{it holds} \ X^{(0,\delta)}(x)\cap\bigcup_{i=1}^{k}S_i=\emptyset\}\,.
\end{equation}

Let $\rho>0$ satisfying $5\rho<\epsilon$ and $2\rho<\theta$. And for each $S_i$ consider $D_\rho^i=X^{(-\rho,\rho)}(S_i)$ and define the projection 
\begin{equation}\label{e.projecao} P_\rho^i:D_\rho^i\rightarrow S_i
\end{equation}
by $P_\rho^i(x)=X^t(x)$, were $X^t(x)\in S_i$ for $|t|<\rho$. Let $\frac{1}{2}\theta>\epsilon_0>0$ be such that if $x,y\in S_i$, $d(x,y)<\epsilon_0$ and $t$ is a real number with $|t|<3\delta$ and $X^t(x)\in T_j$, then $X^t(y)\in D^j_\rho$.

Let  $\phi:\bigcup_{i=1}^k T_i\to \bigcup_{i=1}^k T_i$ be  the
{\em{ first return map}} defined as $\phi(x)=X^t(x)$ where $t>0$ is the smallest positive number such that $X^t(x)\in \bigcup_{i=1}^k T_i$. Note that $t\in[\theta,\epsilon]$.

\begin{figure}[htb]
\begin{center}
\psfrag{a}{$x$}
\psfrag{b}{$y$}
\psfrag{c}{$A$}
\psfrag{d}{$\phi(A,y)$}
\psfrag{e}{$\phi^2(A,y)=\phi(A,x)$}
\includegraphics[height=4.5cm]{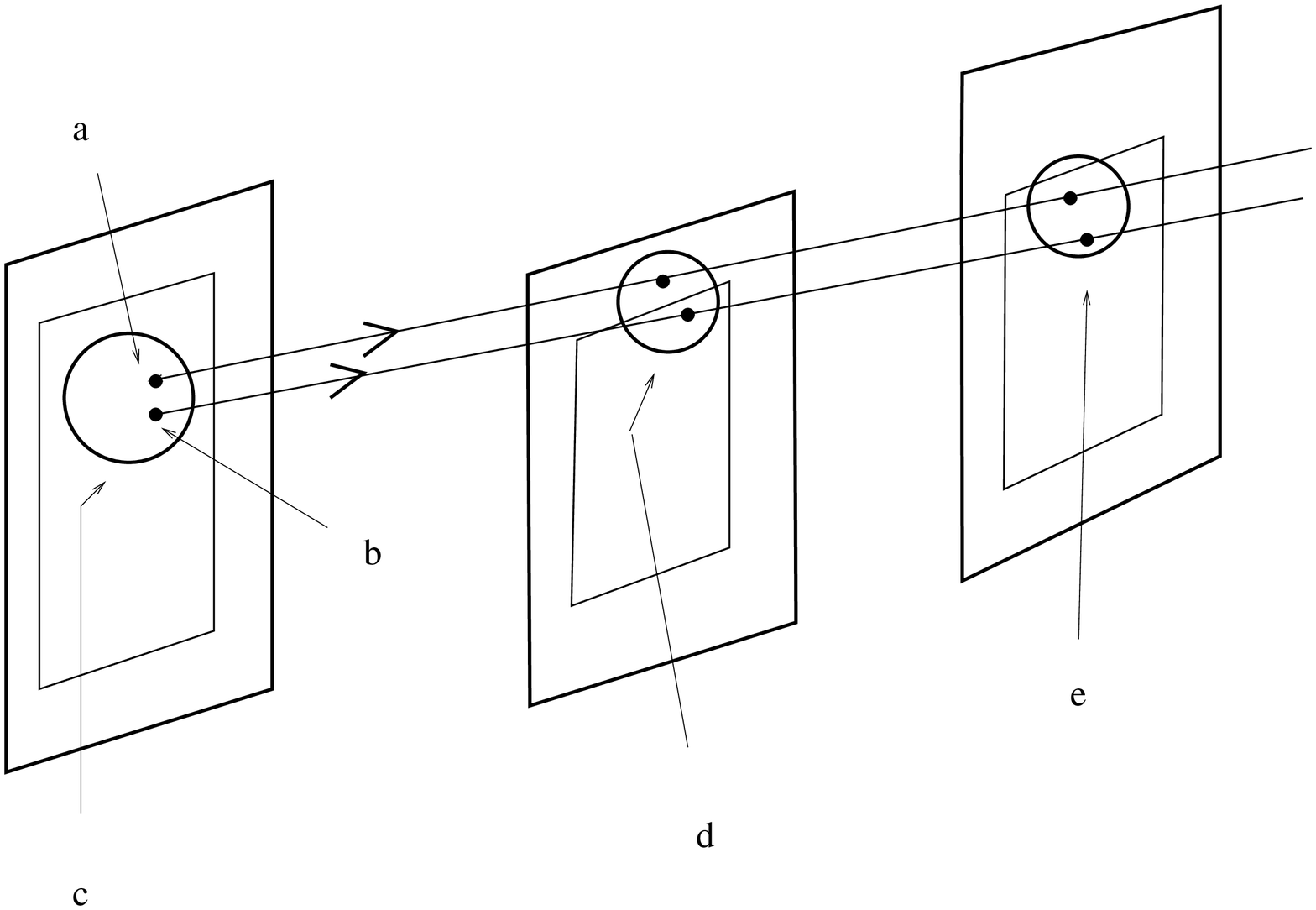}
\caption{} \label{fig2}
\end{center}
\end{figure}

If $x\in T_i$ and $y\in S_i$ with $d(x,y)<\epsilon_0$ let $\{y^x_0,...,y^x_n\}\subset \cO_X(y)$ such that $y^x_0=y$ and $y^x_j=P_\rho^l(X^t(y^x_{j-1}))$, where $t>0$ is the smallest positive time such that $\phi^j(x)=X^t(\phi^{j-1}(x))$, and $l$ is such that $\phi^j(x)\in T_l$. We can continue this construction while $d(\phi^j(x),y_j)<\epsilon_0$. Similarly to $j<0$.

The stable and unstable sets of points is defined in the following way.
If $x\in T_i$ and $\eta<\epsilon_0$, the \emph{$\eta$-estable set} of $x$ is
is defined as
$$ W^s_\eta(x)=\{y\in S_i; d(\phi^i(x),y_i)<\eta \ \forall i\geq 0\}$$       
and the \emph{$\eta$-instable set} of $x$ is defined as
$$ W^u_\eta(x)=\{y\in S_i; d(\phi^i(x),y_i)<\eta \ \forall i\leq 0\}\,.$$

\bigskip
Theorem 2.7 in \cite{KS} establishes that a 
 flow $X^t$ is expansive if, and only if, given a pair $(\mathcal{S},\mathcal{T})$ $\delta$-adequate, there is an $\eta>0$ such that  $W^s_\eta(x)\cap W^u_\eta(x)=\{x\}$ for every $ x\in\bigcup_{i=1}^kT_i$,.  

\bigskip

To extend this result in the context of $cw$-expansive flows, we first introduce the
notion of stable and unstable sets of pair of $\delta$-adapted cross sections. For this, 
let $A\subset S_j$ and $x\in A\cap T_j$. If $\diam A<\frac{1}{2}\epsilon_0$ then $X^t(A)\subset D_\rho^i$, where $t$ and $i$ are such that $X^t(x)=\phi(x)\in T_i$. Define 
$$\phi(A,x)=\{P_\rho^i(X^t(y)); y\in A\}\subset S_i.$$

\begin{figure}[htb]
\begin{center}
\psfrag{a}{$A$}
\psfrag{b}{$x$}
\psfrag{c}{$A_0$}
\psfrag{d}{$\phi^N(A,x)$}
\psfrag{e}{$A_1$}
\psfrag{w}{$y$}
\psfrag{v}{$\phi^N(x)$}
\psfrag{k}{$A_{1,1}$}
\psfrag{l}{$\phi^N(y)$}
\psfrag{f}{$A_{0,0}$}
\psfrag{g}{$\phi^N(A_0)$}
\psfrag{h}{$A_{0,1}$}
\psfrag{i}{$A_{1,0}$}
\psfrag{j}{$\phi^N(A_1,y)$}
\includegraphics[height=4.5cm]{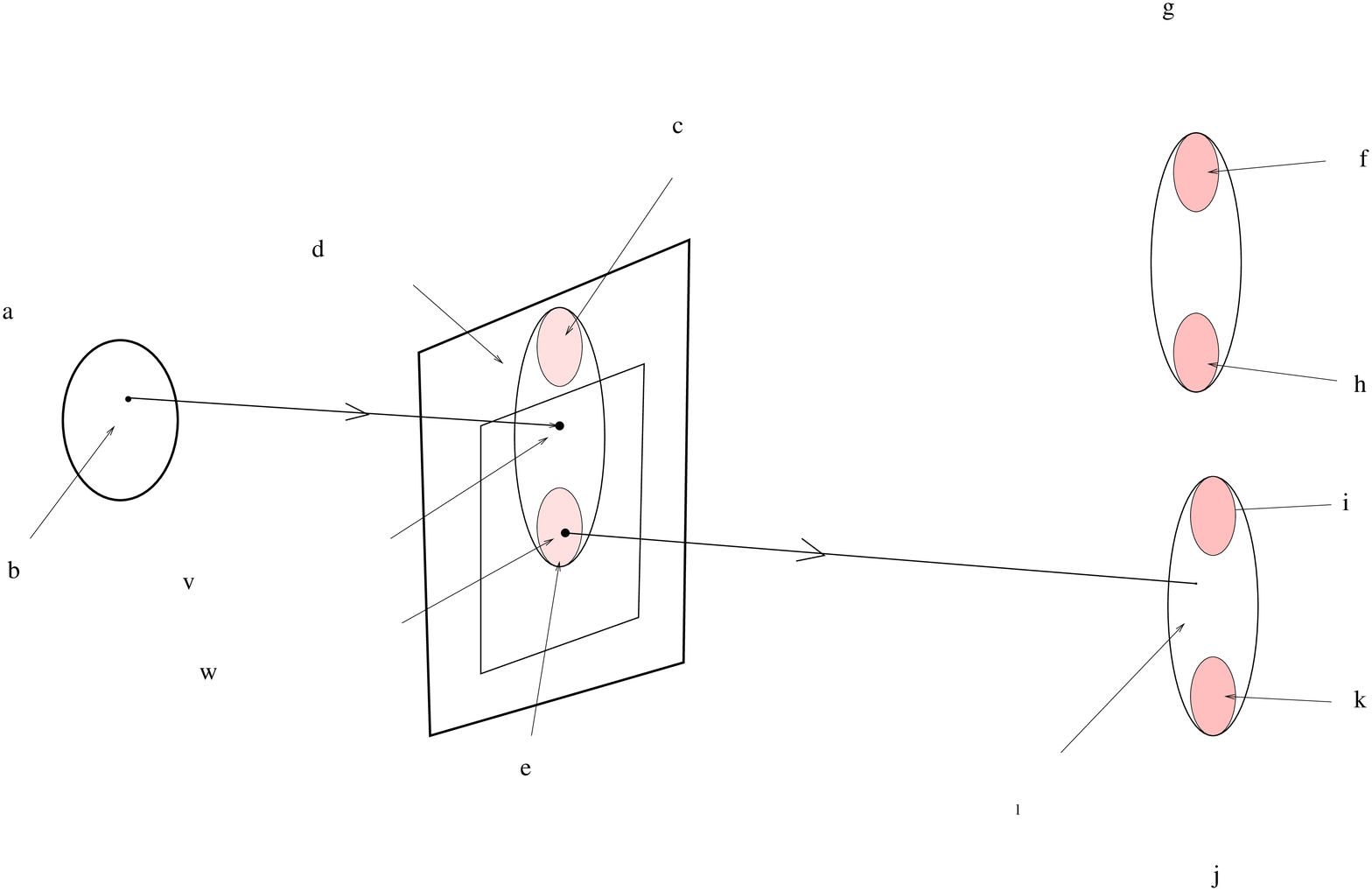}
\caption{} \label{fig3}
\end{center}
\end{figure}

Fix $n\in\mathbb{N}$. If $\phi^n(A,x)$ is defined with $\diam(\phi^n(A))<\epsilon_0$ and $\phi^n(x)\in T^i$, then $X^t(\phi^n(A))\in D_\rho^l$, where $t$ and $l$ are such that $X^t(\phi^n(x))=\phi^{n+1}(x)\in T^l$. Define
$$\phi^{n+1}(A,x)=P_\rho^l(X^t(\phi^n(A)).$$

\begin{defi}\label{variedade-estavel-par}
For $\epsilon_0>\eta>0$ define the local \emph{stable and unstable sets} with respect to  $(\mathcal{T},\mathcal{S})$ by
$$W^s_\eta(\mathcal{S},\mathcal{T})=\{A\in C(M); \mbox{there is} \ x\in A \ \mbox{such that} \ \diam(\phi^n(A,x))<\eta \ \forall n\geq 0\}\, ,$$
$$W^u_\eta(\mathcal{S},\mathcal{T})=\{A\in C(M); \mbox{there is} \ x\in A \ \mbox{such that} \ \diam(\phi^n(A,x))<\eta \ \forall n\leq 0\} \, .$$
\end{defi}
Here, recall that $C(M)$ is the set of compact and connected sets of $M$.

\begin{thm} A flow $X^t$ is $cw$-expansive if, and only if, given a pair $(\mathcal{S},\mathcal{T})$ $\delta$-adequate 
there is an $\eta>0$ such that  
$$W_\eta^s(\mathcal{S},\mathcal{T})\cap W_\eta^u(\mathcal{S},\mathcal{T})=\{A\subset M; \# A=1\}.$$  

\end{thm}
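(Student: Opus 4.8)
The plan is to prove both implications by translating between the two notions of invariant continua: those that are moved a small amount along reparametrized flow lines (the defining feature of $cw$-expansiveness) and those whose iterates under the first-return map $\phi$ stay small (the sets $W^s_\eta, W^u_\eta$). First I would fix, once and for all, a pair $(\cS,\cT)$ that is $\delta$-adequate and the associated constants $\epsilon, \theta, \rho, \epsilon_0$ from Section \ref{sec-pairs}; I would also invoke Theorem \ref{th-sus} and the suspension picture, since near each cross-section the flow looks like a suspension and the return map $\phi$ plays the role of the base homeomorphism.

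For the direction ``$cw$-expansive $\Rightarrow$ the intersection is trivial'': suppose $A$ is a non-degenerate continuum lying in $W^s_\eta(\cS,\cT)\cap W^u_\eta(\cS,\cT)$ for every $\eta$; pick the witness point $x\in A$ so that $\diam(\phi^n(A,x))<\eta$ for all $n\in\ZZ$. The key step is to saturate $A$ by the flow over one return-time interval and thereby build a continuum $\widehat A$ together with a reparametrization $\alpha\in\cH(\widehat A)$ (or, more naturally, a $\beta\in\SQ^*(\widehat A)$ using the sequence of return times as the marking, which is exactly what item (3) of the equivalence theorem is designed for) so that $\diam(\cX^i_\beta(\widehat A))$ is controlled by $\diam(\phi^i(A,x))$ plus an error coming from the projections $P^l_\rho$ and the flow over intervals of length at most $\epsilon$; choosing $\eta$ small compared to the $\delta$ furnished by $cw$-expansiveness for a prescribed small $\epsilon'$ forces $\widehat A\subset X^{(-\epsilon',\epsilon')}(x_\beta)$, and intersecting back with the cross-section $S_j$ collapses $A$ to a point. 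The main obstacle here is bookkeeping the distortion: one must verify that the projection maps $P^l_\rho$ and the displacements introduced at each return do not accumulate — this is precisely why $5\rho<\epsilon$, $2\rho<\theta$, and the uniform-continuity constant $\epsilon_0$ were reserved, and the estimate should be pushed through along the same lines as the chain of inequalities in the proof of $(1)\Rightarrow(3)$ above.

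For the converse, ``trivial intersection for every adequate pair $\Rightarrow$ $cw$-expansive'': given $\epsilon>0$, use the no-fixed-point reduction (Lemma 2.1 and the suspension theorem let us assume there are no equilibria, or handle them separately) and choose an adequate pair $(\cS,\cT)$ of diameter small enough that the hypothesis gives an $\eta$ with $W^s_\eta\cap W^u_\eta$ trivial. Now take a continuum $A$ and $\alpha\in\cH(A)$ with $\diam(\cX^t_\alpha(A))<\delta$ for all $t$, with $\delta$ to be chosen. Translating $A$ along the flow to meet some $T_j$ and projecting, one produces from $A$ a continuum $A'$ in a cross-section with $x_\alpha$ (suitably flowed) as a distinguished point; because the whole orbit-tube of $A$ stays $\delta$-close to the orbit-tube of $x_\alpha$, every forward and backward return iterate $\phi^n(A',\cdot)$ has diameter $<\eta$ provided $\delta$ was chosen small relative to $\eta$ and the return-time bounds $[\theta,\epsilon]$. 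Hence $A'\in W^s_\eta\cap W^u_\eta$, so $A'$ is a single point, which (undoing the projection and the flow translation, and using that $A$ was $\delta$-thin transversally) forces $A\subset X^{(-\epsilon,\epsilon)}(x_\alpha)$. I expect the harder of the two directions to be the first one — realizing the abstract family of iterates $\phi^n(A,x)$ as a genuine reparametrized displacement $\cX^\bullet_\beta(\widehat A)$ with diameters controlled uniformly in $n$ — because the second direction is essentially the (by now routine) observation that orbit-tube closeness propagates to return-map closeness, whereas the first requires assembling a single global continuum and a single global $\beta\in\SQ^*$ out of infinitely many local projection steps without losing uniform control.
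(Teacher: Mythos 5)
Your forward direction (``$cw$-expansive $\Rightarrow$ trivial intersection'') is essentially the paper's argument: interpolate the return times of the witness point to build a reparametrization of $A$ and feed the resulting thin tube into the $cw$-expansivity definition (the paper does this directly with an $\alpha\in\cH(A)$ rather than via a saturated continuum $\widehat A$ and item (3) of the equivalence theorem, but that is cosmetic). Two small imprecisions there: the quantifier should be ``choose $\eta$ in terms of the expansivity constant, then show any $A$ in the intersection for \emph{that} $\eta$ is a point'', not ``$A$ lies in the intersection for every $\eta$''; and the definitions of $W^s_\eta$ and $W^u_\eta$ allow \emph{different} witness points, so you cannot pick a single $x\in A$ with $\diam(\phi^n(A,x))<\eta$ for all $n\in\ZZ$ --- the paper keeps $x_s$, $x_u$ and an arbitrary $x\in A$ separate, at the cost of some extra bookkeeping.

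The genuine gap is in the converse, precisely in the step you call routine. The hypothesis gives a thin tube for an \emph{arbitrary} $\alpha\in\cH(A)$, and there is no a priori reason why $\alpha(y)(t_n)$ should be close to the synchronized time $s_n^y$ at which the orbit of $y$ actually crosses near $\phi^n(x_\alpha)$: the orbit of $y$ may revisit the same flow box, and $\alpha$ may drift far from the return-time parametrization while each individual slice $\cX^t_\alpha(A)$ stays small. So ``orbit-tube closeness propagates to return-map closeness'' is exactly what must be proved, and it is where the paper spends almost all of its effort. The paper argues contrapositively with a case analysis: either $\sup_{y}|\alpha(y)(t_i)-s_i^y|$ stays below a threshold for all $i$, in which case the hypothesis on $W^s_\eta\cap W^u_\eta$ supplies an $n$ with $\diam\phi^n(A)>\eta$ and hence a time of large tube diameter; or there is a first index $j$ where the threshold is exceeded, and then an intermediate time $t'$ is located at which the mismatch itself forces $\diam(\cX^{t'}_\alpha(A))$ past one of the flow-box constants $a_1,\dots,a_4$. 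A separate case is needed when $A$ is not contained in any cross-section (your ``translate and project'' step is not available there), and one must also rule out that $A$ is a long orbit segment --- note that a long orbit segment \emph{can} be reparametrized into a thin tube (this is how Lemma 2.1 is proved), so the conclusion $A\subset X^{(-\epsilon,\epsilon)}(x_\alpha)$ genuinely hinges on the guide condition $\alpha(x_\alpha)=\mathrm{id}_\RR$ and on constants like $a_6,a_7$ controlling how fast an orbit can re-enter a small ball. Consequently you have also misjudged which direction is harder: in the paper the forward direction is the short one.
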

\begin{proof}  Suppose that $X^t$ is $cw$-expansive. Let $a\in(0,\epsilon)$ be given and $a_1>0$ be the $cw$-expansive constant. Let $\eta\in(0,\epsilon_0)$ be such that if $p\in T_i$ and $q\in S_i$, $i\in\{1,...,k\}$, with $d(p,q)<\eta$, then $d(X^t(p),X^s(q))<\frac{a_1}{2}$ where, if $X^{t_1}(p)=\phi(p)$ and $X^{s_1}(q)=q_1$, then $t\in[0,t_1]$ and $s\in[0,s_1]$ and $|s-t|\leq |s_1-t_1|$. Suppose $A\subset W^s_\eta(\mathcal{S},\mathcal{T})\cap W^u_\eta(\mathcal{S},\mathcal{T})$, and let $x_s\in A$ be given by the  definition of $W^s_\eta(\mathcal{S},\mathcal{T})$ and $x_u$ be given by the definition of $W^u_\eta(\mathcal{S},\mathcal{T})$ and fix $x\in A$. Define $t_0=0$, and $t_1>0$ as the time that $X^{t_1}(x)\in\phi(A,x_s)$ and $t_1\in (r_1-\rho,r_1+\rho)$ with $r_1$ the smallest positive time such that $\phi(x_s)=X^{r_1}(x_s)$. If $t_n$ and $r_n$ are well defined, we define $t_{n+1}$ as the real positive number such that $X^{t_{n+1}}(x)\in\phi_{n+1}(A,x_s)$ and $t_{n+1}\in(\sum_{i=1}^nr_i-\rho,\sum_{i=1}^nr_i+\rho)$ with $r_{n+1}$ the smallest positive real number such that $\phi^{n+1}(x^s)=X^{r_{n+1}}(\phi^{n}(x^s))$. Similarly, we can define $t_n$ for $n<0$. For each $y\in A$ we can, with the same process, find a real number sequence $s_n^y$. Define then $\alpha(y)$ by $\alpha(y)(t_n)=s_n$ and linearly on each interval $(t_n,t_{n+1})$.

Then for each $t\in\mathbb{R}$ and $y\in A$ we have $t=t_n+\sigma=s_n^y+\sigma_y$, where $\sigma\in [0,t_{n+1}-t_n]$, $\sigma_y\in [0,s_{n+1}^y-s_n^y]$ and 
$$|\sigma-\sigma_y|\leq|(t_{n+1}-t_n)-(s_{n+1}^y-s_n^y)|,$$
for any $n\in\mathbb{Z}$. Therefore,
\begin{eqnarray*} \diam(\mathcal{X}^t_\alpha(A))&=&\max_{y,z\in A}d(X^{\alpha(y)(t)}(y),X^{\alpha(z)(t)}(z)) \\
&\leq&\max_{y,z\in A}(d(X^{\alpha(y)(t)}(y),X^t(x))+d(X^t(x),X^{\alpha(y)(t)}(y))) \\
&\leq&2\max_{y\in A}d(X^{\alpha(y)(t)}(y),X^t(x)) \\
&<&2\frac{a_1}{2}=a_1.
\end{eqnarray*}
By the choice of $a_1$ we have that $A\subset X^{(-a,a)}(x)$, and since $a<\rho$ and $A$ is inside a cross section we conclude that  $A=\{x\}$.

To prove the reverse implication, suppose  that given a pair $(\mathcal{T},\mathcal{S})$ and $\rho>0$, there is $\eta>0$ such that 
$$W_\eta^s(\cS, \cT)\cap W_\eta^u(\cS,\cT)=\{A\subset M; \# A=1\}.$$

Given $\epsilon > 0$, let $0<a_1< \min\{\frac{\eta}{2}, \epsilon\}$.
We shall prove that
there is $ \sigma > 0$ such that if  $A\subset M$ is a continuum and $\alpha\in \cH(A)$ with $A\not\subset X^{(-a_1,a_1)}(x_\alpha)$ then there is $t\in \RR$ such that $\diam(\cX_\alpha^t(A)) > \sigma. $ 
For this we split the proof into cases.

\textbf{Case 1:}  $A\subset S\in\mathcal{S}$, 
with the guide point $x_\alpha\in T$. For each $y\in A$ let $t_0=s^y_0=0$ and if $i>0$ define $t_{i+1}=t_i+t$ and $s_{i+1}^y=s_{i}^y+s$ where $t$ and $s$ are the smallest positive times such that $X^{t_{i+1}}(\phi^i(x_\alpha))=\phi^{i+1}(x_\alpha)$ and $X^{s_{i+1}}(y_{i}^{x_\alpha})=y_{i+1}^{x_\alpha}$. Similarly for $i<0$. 

Choose $\delta_0\in(0,\epsilon-\delta-\rho)$ and positive numbers, $a_2<a_1,$ and $a_3,a_4>0$ such that if $u\in T_i$ and $v\in S_i$ then it holds
\begin{enumerate}
\item $d(u,v)<a_1$ implies $d(u,X^t(v))>a_1$ for all $|t|\in[\delta_0,\epsilon]$;
\item $d(u,v)<a_2$ implies $d(\phi(u),v_1)<a_1$;
\item $d(u,v)\geq a_2$ implies $d(u,X^t(v))>a_3$ for all $|t|<\delta$;
\item If $x,y\in M$ and $d(x,y)<a_4$ then $d(X^t(x),X^t(y))<a_1$ for all $|t|<\delta$.
\end{enumerate}   
Let $a'=\min\{a_2,a_3,a_4\}$.  
In this case, we will prove that  $a'$ is a $cw$-expansive constant to $X^ t$. 
For this we proceed as follows.
\begin{description}
\item[(a)] Suppose that for each $i\in\mathbb{Z}$ we have  
$$\sup_{y\in A} |\alpha(y)(t_i)-s_i|<\delta.$$
By hypothesis, there is $n\in\mathbb{Z}$ such that $\diam \phi^n(A)>\eta$. Take then $y\in A$ such that $d(\phi^n(x_\alpha),y_n)>\frac{\eta}{2}$. Therefore,
$$d(X^{t_n}(x_\alpha),X^{s_n}(y))>\frac{\eta}{2}>a_1>a_2\,.$$
And by $(3)$ we get
$$\diam \mathcal{X}^{t_n}_\alpha(A)>d(X^{t_n}(x_\alpha),X^{\alpha(y)(t_n)}(y)>a_3\geq a';$$ 
\item[(b)]Suppose that $j\in\mathbb{Z}$ is the integer  with smallest modulus such that   
$$\sup_{y\in A} |\alpha(y)(t_j)-s_j|\geq\delta.$$
We can assume $j>0$. Let $y\in A$ be such that $|\alpha(y)(t_j)-s_j|\geq\delta$.
\item[(b.1)] Suppose there is $i\in [0,j)$ such that
$$ d(\phi^i(x),y_i)>a_2.$$
Since $X^{t_n}(x_\alpha)$ and $X^{\alpha(y)(t_n)}(y)$ belong to $\mathcal{X}^{t_n}_\alpha(A)$,
reasoning as in (a),  we get
$$\diam \mathcal{X}^{t_n}_\alpha(A)\geq d(X^{t_n}(x_\alpha),X^{\alpha(y)(t_n)}(y)>a_3\geq a'.$$
\item[(b.2)] Suppose that for all $i\in [0,j)$ we have 
$$ d(\phi^i(x),y_i)\leq a_2<a_1.$$
\item[(b.2.1)] Suppose $t=s_j^y-\alpha(y)(t_j)\geq\delta$. If $\alpha(t_j)\geq s_{j-1}-\delta_0$, then
$$\delta_0\leq t\leq s_j-s_{j-1}+\delta_0<\delta+\rho+\delta_0<\epsilon$$
and by (1),
\begin{eqnarray*} \diam\mathcal{X}^{t_j}_\alpha(A)&\geq& d(X^{t_j}(x_\alpha),X^{\alpha(y)(t_j)}(y)) \\
&=& d(X^{t_j}(x_\alpha),X^{(s_j-t)}(y))>a_1.
\end{eqnarray*}
 If $\alpha(t_j)<s_{j-1}^y-\delta$ we can find $t'\in (t_{j-1},t_j)$ such that $\alpha(y)(t')=s_{j-1}-\delta_0$. Then, for  $\zeta=t'-t_{j-1}$, 
 $(1)$ implies that
$$d(X^{t_{j-1}}(x_\alpha),X^{s_{j-1}^y-\delta_0-\zeta}(y)>a_1.$$  
Therefore, by $(4),$ 
\begin{eqnarray*} \diam(X^{t'}(A))&\geq& d(X^{t'}(x_\alpha),X^{\alpha(y)(t')}(y)) \\
&=& d(X^{t_{j-1}+\zeta}(x_\alpha),X^{s_{j-1}^y-\delta_0}(y)))>a_4.
\end{eqnarray*}
\item[(b.2.2)] Suppose $t=\alpha(y)(t_j)-s^y_j\geq\delta_0$. Since
$$s_{j-1}^y+\delta\in[\alpha(y)(t_{j-1}),s_j^y+\delta_0),$$
there is $t'\in(t_{j-1},t_j]$ with $\alpha(y)(t')=s_j^y+\delta_0$. 
Let $\zeta=t_j-t'$. As $d(X^{t_j}(x_\alpha),X^{s_j}(y))<a_1$, $(4)$ implies that
$$d(X^{t_j}(x_\alpha),X^{\zeta+\delta_0}(X^{s_j^y}(y)))>a_1.$$
Therefore, 
\begin{eqnarray*} \diam \mathcal{X}^{t'}_\alpha(A)&\geq& d(X^{t'}(x_\alpha),X^{\alpha(y)(t')}(y)) \\
&=&d(X^{t_j+\zeta}(x_\alpha),X^{s_j+\delta_0}(y))>a_4 > a' . 
\end{eqnarray*}
\end{description}

\textbf{Case 2:} Now suppose that $A$ is not inside  a cross-section of $\mathcal{S}$. 
Since $X^t$ is continuous and $\bigcup_{i=1}^kT_i$ is compact, we can
take $\delta_1>0$ and $a_5>0$ such that if $d(x,y)<a_5$ then (1) and (2) below holds.
\begin{enumerate}
\item $d(X^t(x),X^{s}(y))<\frac{a'}{2}$, where $t>0$ is the smallest positive integer such that $X^t(x)\in \bigcup_{i=1}^kT_i$, $X^s(y)=D^i_\rho(X^t(y))$ and $|t-s|<\frac{\delta_1}{16}$;
\item $d(X^w(x),X^v(y))<\frac{a'}{2}$, for all $|w|,|v|\leq\delta_1+\delta$ and $|v-w|\leq \delta_1$. 
\end{enumerate} 
Let $a_6>0$ be such that if $d(x,y)\leq a_6$ then $d(X^{t'}(x),X^{t'+t}(y))>a_6$ for $|t|\in(\frac{\delta_1}{16},\epsilon)$ and $|t'|<\delta$. 

Take $0<a_7<a_5$ such that the connected component of $\cO(x)\cap B_{a_7}$
that contains $X^t(x)$ 
 is contained in $X^{(-a_5,a_5)}(x)$. 
 
Now suppose that $A$ is not contained in $X^{(-a_5,a_5)}(x_\alpha)$, and let $t$ be the smallest positive time such that $X^t(x_\alpha)\in\bigcup_{i=1}^kT_i$.

\begin{description}
\item[(a)] Suppose that $\forall y\in A$, $|\alpha(y)(t)-s|<\frac{\delta_1}{8}$. Since $A$ is compact there is $0 < \delta' <  \frac{\delta_1}{4}$ such that
$$\sup_{y\in A}|\alpha(y)(t+t')-s|<\frac{\delta_1}{4}$$ 
for all $|t'|\leq\delta'$. Define for each $y\in A$ a homeomorphism $\beta(y)(t)=\alpha(y)(t'+t)-s$ for $|t'|\geq\delta'$, $\beta(y)(0)=0$ and  linearly on $(0,\delta')$. Thus, for all $y\in A$, and  all $|t'|<\delta'$ we get
$$\beta(y)(t')-t'|\leq \frac{\delta_1}{4}+\delta'<\frac{\delta_1}{2}\,.$$    
 By $(1)$ and $(2)$, for every $|t'|<\delta'$ we have
$$d(X^{t+t'}(x_\alpha),X^{s+\beta(y)(t')}(y))<\frac{a'}{2}.$$
\item[(b)] Suppose that there exists $y_0\in A$ such that $|\alpha(y_0)(t)-s|\geq\frac{\delta_1}{8}$. Then $|\alpha(y_0)(t)-t|\geq\frac{\delta_1}{16}$ and there is $t'\leq t$ such that $|\alpha(y_0)(t')-t'|=\frac{\delta_1}{16}$. Then,
\begin{eqnarray*} \diam (\mathcal{X}^{t'}_\alpha(A))&\geq& d(X^{t'}(x_\alpha),X^{\alpha(y_0)(t')}(y_0)) \\
&=& d(X^{t'}(x_\alpha),X^{t'\pm\frac{\delta_1}{16}}(y_0)),
\end{eqnarray*}  
If $d(X^{t'}(x_\alpha),X^{t'\pm\frac{\delta_1}{16}}(y_0))> a_6$, then we are done. 
Otherwise, by the choice of $a_6$, we get
 $d(x_\alpha,y_0)>a_6$, which implies that
  $\diam(A)>a_6$. 

Therefore, $\sigma = \min(a',a_6,a_7)$ is a $cw$-expansive constant to $X^t$ 
relative to $\epsilon$.
\end{description}

\end{proof}

Roughly speaking, the next result shows that the local stable and unstable sets of pair
$(\cS, \cT)$ of $\delta$-adequate families of cross-sections for $cw$-expansive flows behave 
as local stable and unstable sets of points when the flow is expansive, for
all points in a continuum $A \in W^s_\eta(\cS, \cT)$.
\begin{thm}\label{var-estavel} Let $X^t$ be a $cw$-expansive flow. Then
\begin{itemize}
\item[(a)] if $A\in W^s_\eta(\mathcal{S},\mathcal{T})$ then $\forall x\in A$, 
$\diam(\phi^n(A,x))\rightarrow 0$ when $n\rightarrow+\infty$;
\item[(b)] if $A\in W^u_\eta(\mathcal{S},\mathcal{T})$ then $\forall x\in A$ $\diam(\phi^n(A,x))\rightarrow 0$ when $n\rightarrow-\infty$. 
\end{itemize}
\end{thm}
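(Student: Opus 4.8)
The statement is really a compactness-plus-contradiction argument, mirroring the classical proof that for expansive flows the $\eta$-stable set of a point contracts. I would prove (a) and then note (b) follows by applying (a) to the reversed flow $X^{-t}$ (which is $cw$-expansive whenever $X^t$ is, and for which stable and unstable sets interchange). So fix $A\in W^s_\eta(\mathcal{S},\mathcal{T})$ and fix $x\in A$; by Definition~\ref{variedade-estavel-par} there is $x_s\in A$ with $\diam(\phi^n(A,x_s))<\eta$ for all $n\geq 0$. The first routine observation is that the sequence $\diam(\phi^n(A,x))$ is comparable to $\diam(\phi^n(A,x_s))$ up to the projection constants, so in fact $\sup_{n\geq 0}\diam(\phi^n(A,x))<\epsilon_0$; hence $\phi^n(A,x)$ is defined for all $n\geq 0$ regardless of which base point in $A$ we track, and it suffices to show $\diam(\phi^n(A,x))\to 0$ for one (hence every) choice.

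Suppose for contradiction that $\diam(\phi^n(A,x))\not\to 0$. Then there is $\kappa>0$ and an increasing sequence $n_j\to\infty$ with $\diam(\phi^{n_j}(A,x))\geq\kappa$. Put $A_j=\phi^{n_j}(A,x)\subset S_{i(j)}$, a continuum of diameter $\geq\kappa$ sitting in one of finitely many cross-sections; passing to a subsequence we may assume all $A_j$ lie in a fixed section $S_i$ and, by the Hausdorff-metric compactness of $\cC(M)$ (the $A_j$ all lie in the compact set $\bigcup S_i$), that $A_j\to A_\infty$ in the Hausdorff metric, with $A_\infty$ a continuum, $A_\infty\subset S_i$, and $\diam(A_\infty)\geq\kappa>0$, so $A_\infty$ is non-degenerate. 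The point of choosing the $n_j$ so that each $A_j$ already is far from collapsing is that now I want to ``re-center'' the dynamics at time $n_j$: the key structural fact to extract is that $A_\infty$ lies in $W^s_{\eta'}(\mathcal{S},\mathcal{T})$ for a slightly enlarged $\eta'$, with base point a limit of the shifted guide points $\phi^{n_j}(x_s)$ — indeed for any fixed $m\geq 0$, $\phi^m(A_j,\cdot)=\phi^{n_j+m}(A,x)$ has diameter $<\eta$ (up to the projection distortion), and these sets converge as $j\to\infty$ to $\phi^m(A_\infty,\cdot)$ by continuity of the first-return map and of the projections $P^l_\rho$ on the relevant domains; so $\diam(\phi^m(A_\infty,x_\infty))\leq\eta$ for all $m\geq 0$, where $x_\infty\in A_\infty$ is chosen as the limit of $\phi^{n_j}(x_s)\in A_j$.

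Now I would build an $\alpha\in\cH(A_\infty)$ that makes $A_\infty$ $\delta$-traced by the guide orbit for all time, exactly as in the ``$(1)\Leftrightarrow$ cross-section'' direction of the preceding theorem (Case~1 there): iterating $\phi$ forward on $A_\infty$ keeps diameters below $\eta$ and, reparametrizing the flow-time between consecutive returns linearly so that the guide point $x_\infty$ stays synchronized, one gets $\diam(\mathcal{X}^t_\alpha(A_\infty))<\delta$ for all $t\geq 0$; running the analogous construction backwards is where I need $A_\infty$ to also be controlled for negative iterates, and this is the delicate point, because membership in $W^s$ only controls forward iterates. The resolution is the usual one: $A_\infty$ is \emph{not} required to be in a global orbit-tube yet — instead, I iterate $\phi^{-1}$ on $A_\infty$ only as long as diameters stay below $\epsilon_0$ and glue; if at some finite negative step the diameter exceeds the $cw$-expansive constant associated to $\epsilon_0$, then already a finite piece of the original $A$ (pulled back) would have been expanded, contradicting $\sup_{n\geq 0}\diam(\phi^n(A,x))<\epsilon_0$ — so in fact all backward iterates stay small too and the bi-infinite $\alpha$ exists. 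Then $cw$-expansiveness forces $A_\infty\subset X^{(-a,a)}(x_\infty)$ for $a$ as small as we like, and since $A_\infty$ sits inside a single cross-section $S_i$ of time $\epsilon>a$ we conclude $A_\infty=\{x_\infty\}$, contradicting $\diam(A_\infty)\geq\kappa>0$. This contradiction proves $\diam(\phi^n(A,x))\to 0$.

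\textbf{Main obstacle.} The genuinely delicate step is the backward-iteration control in the last paragraph: $W^s_\eta$ only constrains forward orbits, so to feed $A_\infty$ into $cw$-expansiveness (which needs a bi-infinite $\alpha$ with small diameter for \emph{all} $t\in\RR$) I must argue that the backward $\phi$-iterates of $A_\infty$ cannot blow up, and this has to be bootstrapped from the uniform bound $\sup_{n\geq 0}\diam(\phi^n(A,x))<\epsilon_0$ together with the fact that $A_\infty$ was obtained as a limit of \emph{deep} forward iterates $\phi^{n_j}(A,x)$ whose own backward iterates (down to level $0$) are exactly earlier forward iterates of $A$ and hence already known to be small. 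Making the quantifiers on $\eta$, $\epsilon_0$, $\rho$ and the $cw$-expansive constant fit together cleanly — so that the enlarged $\eta'$ in the limit is still below $\epsilon_0$ and below the domain-of-definition thresholds for the projections — is the bookkeeping that the proof must handle carefully; everything else is continuity of $\phi$ and $P^l_\rho$ plus Hausdorff-compactness of $\cC(M)$.
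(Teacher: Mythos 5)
Your proof is correct and follows essentially the same route as the paper: argue by contradiction, extract a Hausdorff limit continuum of the deep forward iterates $\phi^{n_j}(A,x)$, and control its backward iterates by observing that $\phi^k$ of the limit, for $k<0$, is the limit of $\phi^{n_j+k}(A,x)$ with $n_j+k\geq 0$, hence of diameter at most $\eta$. The only difference is one of packaging: where you rebuild an $\alpha\in\cH(A_\infty)$ and invoke the definition of $cw$-expansiveness directly, the paper simply concludes that the limit continuum lies in $W^s_\eta(\mathcal{S},\mathcal{T})\cap W^u_\eta(\mathcal{S},\mathcal{T})$ and cites the previously proved theorem that this intersection contains only singletons.
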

\begin{proof} If  (a) is not true then there are $A\subset W^s_\eta(\mathcal{S},\mathcal{T})$, $x\in A$ and $\delta_0>0$ so that $\diam(\phi^{n_i}(A,x))\geq\delta_0$ for an increasing sequence of integers $\{n_i\}$. Thus, for each $i$, there is $y(i)\in A$ such that $d(\phi^{n_i}(x),y(i)_{n_i})\geq\frac{\delta_0}{2}$. 
Recall that $y(i)_{n_i} \in \phi^{n_i}(A,x) $.
There is no loss assuming 
$\phi^{n_i}(x)\rightarrow a\in T_j$ and $y(i)_{n_i}\rightarrow b\in S_j$ with $a\neq b$. Since $C(\bigcup_{i=1}^kS_i)$  is compact, \cite{N}, we can suppose $\lim_{n\rightarrow\infty}\phi^{n_i}(A,x)\rightarrow B\subset S_j$. Since $a,b\in B$, and $a\neq b$, $B$ does not reduce to a unique point and hence, for a fixed
 $k\in\mathbb{Z}$ we have that
\begin{eqnarray*}\diam(\phi^k(B,a))=\max_{c,d\in B} d(c_k^a,d_k^a))\,.
\end{eqnarray*}
Fix $c_k^a,d_k^a\in\phi^k(B,a)$. Thus, there are sequences of points $\{c(i)\}$ and $\{d(i)\}$ in $A$ such that $c(i)_{n_i}\rightarrow c$ and $d(i)_{n_i}\rightarrow d$. Since $\phi^{n_i+k}(x)\rightarrow\phi^k(a)$, \cite[lemma 2.9]{KS} implies that  $c(i)^x_{n_i+k}\rightarrow c^a_k$ and $d(i)^x_{n_i+k}\rightarrow d^a_k$. 
Furthermore, since for all $ i$  it holds
$$d(c(i)^x_{n_i+k},d(i)^x_{n_i+k})\leq \diam (\phi^{n_i+k}(A,x))\leq\eta\,,$$
we get $d(c^a_k,d^a_k)\leq\eta$ and as $c, d$ are arbitrary, we obtain
$$\diam(\phi^k(B,a))<\eta.$$  
This is true for every $k\in\mathbb{Z}$ and hence  $B\in W^s_\eta(\mathcal{S},\mathcal{T})\cap W^u_\eta(\mathcal{S},\mathcal{T})$. This leads to a contradiction because $X^t$ is $cw$-expansive and $B$ does not reduce to a point. 

Similarly we prove (b).

\end{proof}

Theorem \ref{var-estavel} motivates the following definition.

\begin{defi}\label{def-var-estavel-par}
The stable $W^s(\mathcal{S},\mathcal{T})$ and unstable $W^u(\mathcal{S},\mathcal{T})$ sets of a $\delta$-adequate pair of cross
sections $(\cS,\cT)$ are defined as
$$W^s(\mathcal{S},\mathcal{T})=\{A\in \cC(M); \exists\,\, x\in A\cap \bigcup^k_{i=1}T_i \mbox{ and } \diam(\phi^n(A,y)\stackrel{n\rightarrow+\infty}{\longrightarrow} 0, \ \forall y\in A \}$$
$$W^u(\mathcal{S},\mathcal{T})=\{A\in \cC(M); \exists \,\, x\in A\cap \bigcup^k_{i=1}T_i \mbox{ and } \diam(\phi^n(A,y)\stackrel{n\rightarrow-\infty}{\longrightarrow} 0, \ \forall y\in A \}.$$
\end{defi}
Here $\cC(M)$ is the set of all continuum subsets of $M$.

\section{Entropy for flows}
In this section we define a notion of topological entropy for 
$\delta$-adequate pairs of cross-sections and relate this notion with the topological entropy of the flow. 

Next we review the definition of topological entropy for flows given in \cite{B1,B2,B3}.
Given subsets $E,F\subset M$, we say that $E$ \emph{$(t,\delta)$-spans} $F$ if for all $x\in F$ there is $e\in E$ such that 
$$d(X^s(e)X^s(x))\leq\delta, \ \forall s\in[0,t].$$
Let $r_t(F,\delta)$ be the smallest cardinality of a $(t,\delta)$-spanning set for $F$. If $F$ is compact, then $r_t(F,\delta)$ is finite. Define
$$h(X^t|F,\delta)=\limsup_{s\rightarrow\infty}\frac{1}{t}\log r_s(F,\delta).$$
The \emph{topological entropy} of $X^t$ on $F$ is defined as
$$h(X^t|F)=\lim_{\delta\rightarrow 0}h(X^t|F,\delta).$$

Given $E,F\subset M$, we say that $E$ \emph{$(t,\gamma)$-weakly spans} $F$, if for every $x\in F$, there are $e\in E$ and $h\in \Hom(\mathbb{R},0)$ such that  for every $ s\in [0,t]$ it holds
$$d(X^{h(s)}(x),X^s(e))\leq\delta.$$
 Let $R_t(F,\gamma)$ be the smallest cardinality of a  $(t,\gamma)$-weakly spanning set for $F$, and define
$$H(X^t|F,\gamma)=\limsup_{t\rightarrow\infty}\frac{1}{t}\log R_t(F,\gamma).$$ 

Define $H(X^t|F)=\lim_{\delta\rightarrow 0}H(X^t|F,\gamma)$.
If $F=M$ we use the notation $H(X^t|M)=H(X^t)$ and $h(X^t|M)=h(X^t)$.

In \cite[theorem 10]{T}, it is proved that If $X^t$ is a flow without fixed points then $H(X^t)=h(X^t)$.

\begin{defi}\label{d-entropia}
We say $h(X^t)$ is the {\emph{topological entropy}} of the flow $X^t$.
\end{defi}

Next we shall define entropy for pairs $(\cS,\cT)$ of $\delta$-adequate cross sections. For this,
we start defining spanning and separate sets for $(\cS,\cT)$.

Fix a pair $(\mathcal{S},\mathcal{T})$ of $\delta$-adequate cross sections. Given $\gamma > 0$, we say that a set $E\subset\bigcup_{i=1}^kT_i$ is a \emph{$(n,\gamma)$-spanning} for $(\cS,\cT)$ if for all $x\in \bigcup_{i=1}^kT_i$ there exists $e\in E$ such that 
$$d(\phi^i(x),e^x_i)<\gamma\,\, , \forall \,\, i\in \{0,...,n\}.$$   
Let $r'(n,\gamma)$ be the smallest  cardinality of $(n,\gamma)$-spanning sets and define
\begin{equation}\label{entropia}
H'((\mathcal{S},\mathcal{T}),\gamma)=\limsup_{n\rightarrow\infty}\frac{1}{n}\log r'(n,\gamma),\quad H'((\mathcal{S},\mathcal{T}))=\lim_{\gamma\rightarrow 0}H'((\mathcal{S},\mathcal{T}),\gamma).
\end{equation}

We say that $E\subset\bigcup_{i=1}^kT_i$ is a \emph{$(n,\gamma)$-separated} if for
all $ x,y\in E$,  either $y_i^x$ is not defined for some $i\in\{0,...,n\}$ or there exists $i\in\{0,...,n\}$ such that  
$$d(\phi^i(x),y^x_i)\geq\gamma.$$   
Let $s'(n,\gamma)$ be the largest cardinality of  $(n,\gamma)$-separated sets and define
\begin{equation}\label{entropia-2}
s'((\mathcal{S},\mathcal{T}),\gamma)=\limsup_{n\rightarrow\infty}\frac{1}{n}\log s'(n,\gamma),\quad s'((\mathcal{S},\mathcal{T}))=\lim_{\gamma\rightarrow 0}s'((\mathcal{S},\mathcal{T}),\gamma).
\end{equation}

\begin{lemma} Let $X^t$ be a without fixed point flow and $(\mathcal{S},\mathcal{T})$ 
be a pair of $\delta$-adequate cross sections for $X^ t$.  Then for all $\gamma >0$ we have  $R'(n,\gamma)\leq s'(n,\gamma)\leq R'(n,\frac{\gamma}{2})$.
\end{lemma}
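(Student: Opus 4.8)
The statement is the standard spanning/separated comparison, transported to the setting of pairs $(\cS,\cT)$ of cross-sections. (I read $R'(n,\gamma)$ as the quantity $r'(n,\gamma)$ introduced just above, i.e. the minimal cardinality of an $(n,\gamma)$-spanning set for $(\cS,\cT)$; the two inequalities are then exactly the classical Bowen-type chain $r'(n,\gamma)\le s'(n,\gamma)\le r'(n,\gamma/2)$.) The plan is to follow the proof of the analogous inequalities for topological entropy of a map almost verbatim, the only extra care being that the orbit $\{y_i^x\}$ of a point $y\in S_i$ relative to a basepoint $x\in T_i$ is only defined as long as $d(\phi^j(x),y_j^x)<\epsilon_0$, so all the estimates must be run with $\gamma<\epsilon_0$ so that being $(n,\gamma)$-close automatically guarantees the relevant iterates are defined.

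First I would prove $r'(n,\gamma)\le s'(n,\gamma)$. Take a maximal $(n,\gamma)$-separated set $E\subset\bigcup_i T_i$; maximality makes sense because $\bigcup_i T_i$ is compact and, for fixed $n$ and $\gamma$, an $(n,\gamma)$-separated set is finite (the maps $z\mapsto \phi^j(z)$ and $z\mapsto z_j^x$ are continuous where defined, so two points that are $(n,\gamma)$-close force a lower bound on mutual distance to fail — a standard covering/volume-free compactness argument). I claim such a maximal $E$ is automatically $(n,\gamma)$-spanning: given any $x\in\bigcup_i T_i$, if $x$ were not $(n,\gamma)$-spanned by $E$, then for every $e\in E$ either some $e_i^x$ is undefined or $d(\phi^i(x),e_i^x)\ge\gamma$ for some $i\in\{0,\dots,n\}$; but then $E\cup\{x\}$ would still be $(n,\gamma)$-separated (using the basepoint $x$ to witness separation from each $e$), contradicting maximality. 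Hence $r'(n,\gamma)\le \#E=s'(n,\gamma)$.

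Next I would prove $s'(n,\gamma)\le r'(n,\gamma/2)$. Let $E$ be an $(n,\gamma)$-separated set of maximal cardinality and let $F$ be an $(n,\gamma/2)$-spanning set of minimal cardinality $r'(n,\gamma/2)$. Define a map $\Phi:E\to F$ by sending each $x\in E$ to some $f=\Phi(x)\in F$ with $d(\phi^i(x),f_i^x)<\gamma/2$ for all $i\in\{0,\dots,n\}$ (such $f$ exists by the spanning property). I claim $\Phi$ is injective: if $\Phi(x)=\Phi(y)=f$ for $x\neq y$ in $E$, then $d(\phi^i(x),f_i^x)<\gamma/2$ and $d(\phi^i(y),f_i^y)<\gamma/2$ for all $i\le n$; using \cite[lemma 2.9]{KS} (the same lemma invoked in the proof of Theorem~\ref{var-estavel} to compare $\cdot_i^x$ and $\cdot_i^y$ for nearby basepoints) one gets $f_i^x$ and $f_i^y$ comparable, and the triangle inequality yields $d(\phi^i(x),y_i^x)<\gamma$ — or symmetrically with $x,y$ swapped — for all $i\in\{0,\dots,n\}$, contradicting that $E$ is $(n,\gamma)$-separated. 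Therefore $s'(n,\gamma)=\#E\le\#F=r'(n,\gamma/2)$.

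\textbf{Main obstacle.} The only genuinely nonroutine point is the injectivity step in the second inequality: unlike in the classical map setting, "$x$ and $y$ are both $\gamma/2$-close to the same point $f$" does not immediately give "$x$ and $y$ are $\gamma$-close" because the three relevant pseudo-orbits $\{\phi^i(x)\},\{f_i^x\},\{f_i^y\}$ are computed with respect to two different basepoints ($x$ and $y$) via the projections $P_\rho^l$, and one must check that $f_i^x$ and $f_i^y$ (and $y_i^x$) are defined and close. This is precisely the content of \cite[lemma 2.9]{KS}, which controls how the iterated projection depends on the basepoint for nearby basepoints; once that lemma is applied, the triangle inequality finishes everything. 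All finiteness claims for separated sets follow from compactness of $\bigcup_i T_i$ together with continuity of the finitely many maps $\phi^i$ and $P_\rho^l$ restricted to the region where the construction is defined.
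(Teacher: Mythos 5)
Your overall architecture is exactly the paper's: the first inequality via "a maximal $(n,\gamma)$-separated set must be $(n,\gamma)$-spanning", and the second via an injective assignment from a separated set into a spanning set. The first half is fine and matches the paper essentially verbatim.

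The gap is in the injectivity step of the second inequality. You propose to compare $f_i^x$ with $f_i^y$ by invoking \cite[Lemma 2.9]{KS} to get them ``comparable'' and then close with the triangle inequality. This does not work as stated, for two reasons. First, that lemma is a qualitative convergence statement (it controls $\cdot_i^{x_n}$ as the basepoints $x_n$ converge); for two \emph{fixed} distinct basepoints $x\neq y$ it yields no quantitative bound, and any attempt to extract one is circular, since the closeness of $\phi^i(x)$ and $\phi^i(y)$ at step $i$ is precisely what you are trying to establish. Second, even granting that $d(f_i^x,f_i^y)\le\epsilon'$ for some $\epsilon'>0$, the triangle inequality only gives $d(\phi^i(x),\phi^i(y))<\gamma/2+\gamma/2+\epsilon'=\gamma+\epsilon'$, which does \emph{not} contradict $(n,\gamma)$-separation: you need a strict bound below $\gamma$. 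The paper resolves this by proving, inductively in $i$, that $g(x)_i^x=g(y)_i^y$ \emph{exactly}: at each step both pseudo-orbits apply the flow-box projection $P_\rho^l$ onto the same section $S_l$ to points lying on the same orbit of $f=g(x)=g(y)$ inside the same flow box (uniqueness of the crossing being guaranteed by $2\rho<\theta$), and the inductive hypothesis together with $d(\phi^{i_0}(x),\phi^{i_0}(y))<\gamma<\epsilon_0$ guarantees the next projections are defined and land in the same section. With exact equality the triangle inequality gives $\gamma/2+\gamma/2=\gamma$ with no error term, and injectivity follows. Your proof needs this equality (or an equivalent exact identification of the two pseudo-orbits of $f$), not an appeal to the continuity lemma.
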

\begin{proof} Let $E$ be a largest $(n,\gamma)$-separated set and suppose that is not a $(n,\gamma)$-spanning. Thus exists $x\in\bigcup_{i=1}^kT_i$ such that if $e\in E$ then there is $i\in\{0,...,n\}$  
$$d(\phi^i(x),e_i^x)\geq\gamma.$$  
then, $E\cup\{x\}$ is a $(n,\gamma)$-separated set, leading to a contradiction. Then $$R'(n,\gamma)\leq s'(n,\gamma).$$
To prove the other inequality, let $E$ be a $(n,\gamma)$-separated and $F$ be a $(n,\frac{\gamma}{2})$-spaning set. Then $\forall x\in\bigcup_{i=1}^kT_i$ we can take $g(x)\in F$ such that 
$$d(\phi^i(x),g(x)^x_i)<\frac{\gamma}{2}$$
$\forall i\in\{0,...,n\}$. If $g(x)=g(y)$ $x,y,g(x)\in T_j$ for some $j\in\{1,...,k\}$, and $g(x)_i^x=g(y)_i^y$ $\forall i\in\{0,...,n\}$. The last statement is easy to proof by induction: if for $i_0<n$ is true that $\forall i\in\{0,...,i_0\}$  $g(x)_i^x=g(y)_i^y$ then if $g(x)_{i_0+1}^x\neq g(y)_{i_0+1}^y$ ou $x_{i_0+1}^y$ is not defined or $y_{i_0+1}^x$ is not defined, absurd, because
$$d(\phi^{i_0}(x),\phi^{i_0}(y))\leq d(\phi^{i_0}(x),g(x)^{i_0}_x)+d(\phi^{i_0}(y),g(y)^{i_0}_y)< \gamma.$$
Thus, $\forall i\in\{0,...,n\}$ we have
\begin{eqnarray*} d(\phi^i(x),\phi^i(y))\leq d(\phi^i(x),g(x)_i^x)+d(\phi^i(y),g(x)_i^y)<\gamma.
\end{eqnarray*} 
 Since $E$ is $(n,\frac{\gamma}2)$-separated and $g$ restricted to $E$ is injective we obtain that  $\#E\leq\#F$ and $s'(n,\gamma)\leq R'(n,\frac{\gamma}{2})$, finishing the proof. 
\end{proof}

\begin{clly} $H'((\mathcal{S},\mathcal{T}))=s'((\mathcal{S},\mathcal{T}))$.
\end{clly}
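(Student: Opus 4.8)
The plan is to derive the corollary directly from the lemma $R'(n,\gamma)\le s'(n,\gamma)\le R'(n,\tfrac{\gamma}{2})$ by taking logarithms, dividing by $n$, passing to the $\limsup$ as $n\to\infty$, and then letting $\gamma\to 0$; this is the standard two-sided squeeze argument used to identify entropy via spanning sets with entropy via separated sets, exactly as in the classical Bowen formulation. One small bookkeeping point is that the excerpt's lemma is stated with $R'$ while the quantities defined in \eqref{entropia} use the notation $r'$; I will simply treat $R'(n,\gamma)=r'(n,\gamma)$ (the smallest cardinality of an $(n,\gamma)$-spanning set for $(\cS,\cT)$), so that $H'((\cS,\cT)) = \lim_{\gamma\to 0}\limsup_{n\to\infty}\frac1n\log r'(n,\gamma)$ and $s'((\cS,\cT))=\lim_{\gamma\to 0}\limsup_{n\to\infty}\frac1n\log s'(n,\gamma)$.

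The key steps, in order, are as follows. First, apply $\log$ (monotone) to the chain $r'(n,\gamma)\le s'(n,\gamma)\le r'(n,\tfrac{\gamma}{2})$ and divide by $n>0$ to get
\[
\frac1n\log r'(n,\gamma)\ \le\ \frac1n\log s'(n,\gamma)\ \le\ \frac1n\log r'\!\left(n,\tfrac{\gamma}{2}\right).
\]
Second, take $\limsup_{n\to\infty}$ throughout; since $\limsup$ is monotone with respect to pointwise inequality of sequences, this yields
\[
H'((\cS,\cT),\gamma)\ \le\ s'((\cS,\cT),\gamma)\ \le\ H'\!\left((\cS,\cT),\tfrac{\gamma}{2}\right).
\]
Third, observe that $\gamma\mapsto H'((\cS,\cT),\gamma)$ and $\gamma\mapsto s'((\cS,\cT),\gamma)$ are non-increasing in $\gamma$ (fewer points are needed to span, and more points can be packed while separated, as $\gamma$ decreases), so the limits as $\gamma\to 0$ exist in $[0,\infty]$ and agree with the corresponding suprema over $\gamma>0$. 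Fourth, let $\gamma\to 0$ in the displayed double inequality: the left-hand term tends to $H'((\cS,\cT))$, the right-hand term $H'((\cS,\cT),\tfrac{\gamma}{2})$ also tends to $H'((\cS,\cT))$ (as $\tfrac{\gamma}{2}\to 0$ iff $\gamma\to 0$), and the middle term tends to $s'((\cS,\cT))$. The squeeze then forces $H'((\cS,\cT)) = s'((\cS,\cT))$.

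I do not expect any genuine obstacle here: the only things to be careful about are that all quantities may a priori be $+\infty$, so the manipulations should be read in the extended reals (the monotonicity of $\log$, of division by a positive $n$, of $\limsup$, and of the $\gamma$-limits all remain valid there), and that one must invoke the monotonicity in $\gamma$ to guarantee the limits defining $H'((\cS,\cT))$ and $s'((\cS,\cT))$ actually exist rather than merely having $\liminf/\limsup$. If one prefers to avoid even this, one can instead take $\liminf_{\gamma\to0}$ on the left and $\limsup_{\gamma\to0}$ on the right of the inequality $H'((\cS,\cT),\gamma)\le s'((\cS,\cT),\gamma)\le H'((\cS,\cT),\tfrac\gamma2)$ and conclude the common value exists and equals both sides; but since the definitions in \eqref{entropia} and \eqref{entropia-2} already write these as honest limits, the monotonicity remark is the cleanest route. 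This completes the proof.
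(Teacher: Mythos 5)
Your proof is correct and is exactly the argument the paper intends: the corollary is stated without proof as an immediate consequence of the preceding lemma, and the standard squeeze (take $\tfrac1n\log$, pass to $\limsup_{n\to\infty}$, then let $\gamma\to 0$ using monotonicity in $\gamma$) is the only step needed. Your bookkeeping remarks, identifying $R'$ with $r'$ and noting that the quantities live in the extended reals, correctly resolve the paper's minor notational slips.
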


\begin{defi}
$H'((\mathcal{S},\mathcal{T}))$ is, by definition,  the \emph{topological entropy} 
of the $\delta$-adequate
pair  of  cross sections $(\mathcal{S},\mathcal{T})$.
\end{defi}

\begin{thm}\label{t-entropia} Let $X^t$ be a flow without fixed points and $(\mathcal{S},\mathcal{T})$ be a $\delta$-adequate pair of cross sections. If $H'(\mathcal{S},\mathcal{T})>0$ then $h(X^t)>0$.
\end{thm}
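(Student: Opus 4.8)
The plan is to show that the topological entropy of the flow $X^t$ dominates (up to a positive multiplicative constant) the entropy $H'(\mathcal{S},\mathcal{T})$ of the $\delta$-adequate pair, so that positivity of the latter forces positivity of the former. The natural way to compare the two quantities is to relate a $(t,\gamma)$-spanning set for $M$ under the flow with an $(n,\gamma')$-spanning set for $(\mathcal{S},\mathcal{T})$ under the first return map $\phi$. Concretely, I would first fix $\gamma>0$ small (smaller than $\epsilon_0$ and compatible with the projections $P^i_\rho$) and use uniform continuity of the flow and of the projections to choose $\gamma'>0$ such that if two points $x,y\in\bigcup_i T_i$ satisfy $d(X^s(x),X^s(y))\leq\gamma'$ for all $s\in[0,t]$, then their $\phi$-orbits stay $\gamma$-close: $d(\phi^i(x),y^x_i)<\gamma$ for all $i$ with $\phi^i(x)$ reached before time $t$. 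This uses that each return time lies in the fixed interval $[\theta,\epsilon]$, so that $n$ returns occur within time at most $n\epsilon$, equivalently a time-$t$ orbit segment contains at least $\lfloor t/\epsilon\rfloor$ returns.

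Next I would take, for each $t>0$, a minimal $(t,\gamma')$-spanning set $E_t$ for $M$, so $\#E_t=r_t(M,\gamma')$ and $h(X^t)\geq\limsup_t \frac1t\log r_t(M,\gamma')$ after letting $\gamma'\to0$. From $E_t$ I would manufacture a spanning set for $(\mathcal{S},\mathcal{T})$: given $x\in\bigcup_i T_i$, pick $e\in E_t$ with $d(X^s(e),X^s(x))\leq\gamma'$ for $s\in[0,t]$; then $P^i_\rho(e)$ (the projection of $e$ to the appropriate cross-section, using that $e$ lies within time $\rho$ of some $S_i$) together with the choice of $\gamma'$ gives that the $\phi$-orbit of this projected point $\gamma$-shadows the $\phi$-orbit of $x$ for the first $n=\lfloor t/\epsilon\rfloor$ steps. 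Hence the set of such projected points is $(n,\gamma)$-spanning for $(\mathcal{S},\mathcal{T})$, giving $r'(n,\gamma)\leq r_t(M,\gamma')$ with $n=\lfloor t/\epsilon\rfloor$, i.e. $t\leq \epsilon(n+1)$. Taking $\limsup$ over $n$ (equivalently $t$) yields
$$H'((\mathcal{S},\mathcal{T}),\gamma)=\limsup_{n\to\infty}\tfrac1n\log r'(n,\gamma)\leq \epsilon\,\limsup_{t\to\infty}\tfrac1t\log r_t(M,\gamma')\leq \epsilon\, h(X^t|M,\gamma').$$
Letting $\gamma\to0$ (which lets $\gamma'\to0$) gives $H'((\mathcal{S},\mathcal{T}))\leq\epsilon\,h(X^t)$, and since $\epsilon>0$ is a fixed finite constant, $H'((\mathcal{S},\mathcal{T}))>0$ implies $h(X^t)>0$.

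The main obstacle I anticipate is the bookkeeping around the projections $P^i_\rho$ and the time reparametrization: a flow-spanning point $e$ $\gamma'$-shadows $x$ with respect to the \emph{same} time parameter $s$, whereas $\phi$-shadowing requires controlling the images under the first-return map, whose return times for $e$ and for $x$ differ slightly. One must check that a small $\gamma'$ guarantees that $e$ and $x$ cross the cross-sections in the ``same order'' (land in the same $T_l$'s) and that the accumulated discrepancy in return times over $n$ steps stays within $\rho$, so that the projections $P^i_\rho$ remain well-defined along the whole orbit. This is exactly the kind of estimate enabled by the uniform bounds $\theta\leq\text{(return time)}\leq\epsilon$ and the definition of $\epsilon_0$ in Section~\ref{sec-pairs}; I would invoke \cite[Lemma 2.9]{KS} (already used in the proof of Theorem~\ref{var-estavel}) to handle the continuity of the maps $y\mapsto y^x_i$ and close this gap. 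A secondary, purely technical point is ensuring the spanning set we build actually sits inside $\bigcup_i T_i$ rather than in the $S_i$; this is arranged by composing with $P^i_\rho$ and, if necessary, flowing forward a bounded time to land in $T_i\subset S_i^*$, which changes nothing in the entropy estimate.
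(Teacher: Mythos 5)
Your proposal is correct in substance, and it proves the right inequality ($H'(\mathcal{S},\mathcal{T})\leq\epsilon\, h(X^t)$, hence $h(X^t)\geq H'(\mathcal{S},\mathcal{T})/\epsilon>0$), but it runs the comparison in the dual direction to the paper. The paper works with \emph{separated} sets and transfers them from the sections to the flow: it takes a maximal $(n,\gamma)$-separated set $E\subset\bigcup_i T_i$ for $(\mathcal{S},\mathcal{T})$ and observes that, because each return time lies in $[\theta,\epsilon]$ and the projections $P^i_\rho$ are uniformly continuous, the \emph{same} set $E$ is $(n\epsilon,\gamma')$-separated for the flow for a suitable $\gamma'<\gamma$; this gives $s'(n,\gamma)\leq s(n\epsilon,\gamma',X^t)$ and the conclusion via the corollary $H'(\mathcal{S},\mathcal{T})=s'(\mathcal{S},\mathcal{T})$. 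You instead work with \emph{spanning} sets and transfer them from the flow to the sections, which forces you to actually build new points (the projections $P^i_\rho(e)$ of the flow-spanning points) and to verify that the iteratively defined shadow $e^x_i$ — which is produced by alternately flowing by $x$'s return-time increments and projecting, not by projecting $X^{t_i}(e)$ directly — stays $\gamma$-close to $\phi^i(x)$ without accumulating error over $n$ steps. That verification does go through (the key point is that each $e^x_i$ lies on the orbit of $e$ at a time within $\rho$ of $t_i$, so closeness follows from $d(X^{t_i}(e),X^{t_i}(x))\leq\gamma'$ rather than from an induction that compounds constants), and the multiplicity introduced by projecting each $e$ to up to $k$ different sections only multiplies cardinalities by a bounded factor, which is harmless for entropy. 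The separated-set route is slightly cleaner precisely because it constructs nothing new: one only has to show that two section-orbits that separate at scale $\gamma$ force the underlying flow orbits to separate at scale $\gamma'$ at the corresponding (bounded) time, avoiding both the projection multiplicity and the issue of landing in $\bigcup_i T_i$ that you flag at the end. Your handling of the time-rescaling factor $\epsilon$ in the limsup is actually more careful than the paper's, which elides it.
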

\begin{proof} Fix $\delta>0$ such that $s'((\cS,\cT),\delta)>\frac{H'(\cS,\cT)}{2}>0$. Fix $n\in\mathbb{N}$, consider $E\subset \bigcup_{i=1}^k T_i$ maximal $(n,\gamma)$-separated set for $(\mathcal{S},\mathcal{T})$. Since $X^{[0,n\epsilon]}(x)$ cross at least $n$ cross-sections $T_i's$ for each $x$, there is $\gamma>\gamma'>0$ such that $E$ is a $(n\epsilon,\gamma')$-separated set for the flow $X^t$. Thus, $s'(n\epsilon,\gamma',(\mathcal{S},\cT)\leq s(n\epsilon,\gamma,X^t)$. 
Here $s'(n\epsilon,\gamma',(\mathcal{S},\cT)$ is the maximal cardinality of
$(n\epsilon,\gamma)$-separated sets of $(\cS,\cT)$ and $s(n\epsilon,\gamma,X^t)$
is the maximal cardinality of $(n\epsilon,\gamma)$-separated sets of $X^t$.
Then,
\begin{eqnarray*} s(\gamma,X^t)&=&\limsup_{u\rightarrow\infty}s(u,\gamma,X^t) \\
&\geq& \limsup_{n\rightarrow\infty}s(n\epsilon,\gamma,X^t) \\
&\geq& \limsup_{n\rightarrow\infty}s'(n\epsilon,\gamma,(\cS,\cT)) \\
&=& s'(\gamma,(\cS,\cT)) \\
&>& \frac{H'((\cS,\cT))}{2}.
\end{eqnarray*}
Therefore, $h(X^t)=\lim_{\gamma\rightarrow 0} s(\gamma,X^t)>0$.
\end{proof}

\section{Entropy for $cw$-expansive flows}
In this section we prove our main theorem. First we need to prove some technicals results.

\begin{lemma}\label{Uniforme} Let $X^t$ be a $cw$-expansive flow with $\eta$ as expansive constant. For all $\epsilon_0\in (0,\frac{\eta}{2}]$, there is $\delta_0>0$ such that if 
$A\in \cC( \bigcup_{i=1}^kS_i)$, $x\in A\cap\bigcup_{i=1}^kT_i$, $\diam(A)\leq\delta_0$ and if
there is $n>0$ such that
$$\sup\{\diam\phi^i(A,x);i=1,...,n\}\in[\epsilon_0,2\epsilon_0]$$    
then $\diam(\phi^n(A,x))\geq\delta_0.$
\end{lemma}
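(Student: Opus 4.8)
The plan is to argue by contradiction, exactly in the spirit of the proof of Theorem \ref{var-estavel}: assume the statement fails and extract, via compactness of the hyperspace $\cC(\bigcup_{i=1}^k S_i)$, a non-degenerate continuum that lies in $W^s_\eta(\cS,\cT)\cap W^u_\eta(\cS,\cT)$, contradicting $cw$-expansivity. First I would fix $\epsilon_0\in(0,\frac{\eta}{2}]$ and suppose, for contradiction, that for every $j\in\NN$ there are a continuum $A_j\in\cC(\bigcup_{i=1}^k S_i)$ with $\diam(A_j)\le \frac1j$, a point $x_j\in A_j\cap\bigcup_{i=1}^k T_i$, and an integer $n_j>0$ such that $\sup\{\diam\phi^i(A_j,x_j):i=1,\dots,n_j\}\in[\epsilon_0,2\epsilon_0]$ but $\diam(\phi^{n_j}(A_j,x_j))<\frac1j$. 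Choose $m_j\in\{1,\dots,n_j\}$ realizing the supremum, so $\diam(\phi^{m_j}(A_j,x_j))\in[\epsilon_0,2\epsilon_0]$; note $m_j<n_j$ for $j$ large since $\diam(\phi^{n_j}(A_j,x_j))<\frac1j<\epsilon_0$, and also $n_j-m_j\to\infty$ (the diameter cannot shrink from $\ge\epsilon_0$ to $<\frac1j$ in a bounded number of steps, using continuity of $\phi$ on the compact union of cross-sections, i.e.\ a uniform modulus of continuity).

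Next I would recenter: set $B_j=\phi^{m_j}(A_j,x_j)$ and let $p_j=\phi^{m_j}(x_j)\in\bigcup_{i=1}^k T_i$ be the distinguished point of $B_j$ (recall $\phi^i(A,x)$ is built so that $\phi^i(x)\in\phi^i(A,x)$). Then $\diam(B_j)\in[\epsilon_0,2\epsilon_0]$, and by the cocycle property of the iterated-section construction we have $\phi^i(B_j,p_j)=\phi^{m_j+i}(A_j,x_j)$ as long as the latter is defined. Passing to subsequences, I would use compactness of $\bigcup_{i=1}^k T_i$ and of the hyperspace (Nadler's theorem \cite{N}, as used in Theorem \ref{var-estavel}) to assume $p_j\to a\in T_{j_0}$ for some fixed index and $B_j\to B$ in the Hausdorff metric, with $B\subset S_{j_0}$ a continuum, $a\in B$, and $\diam(B)\in[\epsilon_0,2\epsilon_0]$; in particular $B$ is non-degenerate. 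Now for each fixed $k\ge 0$: since $m_j+k\le n_j$ for $j$ large (because $n_j-m_j\to\infty$) and $\diam(\phi^{m_j+k}(A_j,x_j))\le 2\epsilon_0\le\eta$, Lemma 2.9 of \cite{KS} (continuity of the point-chain construction with respect to initial data, already invoked in Theorem \ref{var-estavel}) gives $\phi^k(B,a)=\lim_j\phi^{m_j+k}(A_j,x_j)$, hence $\diam(\phi^k(B,a))\le\eta$; this shows $B\in W^u_\eta(\cS,\cT)$.

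For the stable side I would run the same limiting argument backwards from the terminal index $n_j$: set $C_j=\phi^{n_j}(A_j,x_j)$, a continuum through $q_j=\phi^{n_j}(x_j)$ with $\diam(C_j)<\frac1j\to 0$, and for each fixed $k\ge 0$ examine $\phi^{-k}(C_j,q_j)=\phi^{n_j-k}(A_j,x_j)$ (defined for $j$ large since $n_j-m_j\to\infty$ and we may also take $n_j\to\infty$). Each such set has diameter $\le 2\epsilon_0\le\eta$, so in the limit — after passing to a further subsequence with $q_j\to c\in T_{j_1}$ and the backward sets converging — one obtains a continuum $\tilde B\subset S_{j_1}$ with $\diam(\phi^{-k}(\tilde B,c))\le\eta$ for all $k\ge 0$, i.e.\ $\tilde B\in W^s_\eta(\cS,\cT)$; moreover $\tilde B$ is non-degenerate because its forward iterate at the appropriate index has diameter $\ge\epsilon_0$ (that is exactly $B_j$'s role, transported along the finitely-many-steps-from-$n_j$ positions — care is needed here, and the cleanest route is to track the non-degeneracy through the same index $m_j$, i.e.\ to note that $\tilde B$ and $B$ live on the same orbit structure and $\phi^{m_j-n_j+(\cdot)}$ relates them). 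The contradiction is then that this single limiting continuum — or rather a continuum obtained by stitching the two limits along the orbit of the flow — is non-degenerate and lies in $W^s_\eta(\cS,\cT)\cap W^u_\eta(\cS,\cT)$, contradicting the characterization of $cw$-expansivity in Theorem~4.1.

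The main obstacle is the bookkeeping in this last step: the diameter is large at index $m_j$ and small at index $n_j$, with both $m_j$ and $n_j-m_j$ tending to infinity, so naively one limit loses non-degeneracy and the other loses the unstable estimate. The fix is to not take a single limit but to observe that the hypothesis gives, for the sequence recentered at $m_j$, a continuum $B$ that is both non-degenerate (diameter $\ge\epsilon_0$) \emph{and} in $W^u_\eta$; then one must still upgrade $B$ to lie also in $W^s_\eta$ — this uses that the forward diameters $\diam(\phi^i(B,a))$ for $i\ge 1$ stay $\le 2\epsilon_0$ by construction of $m_j$ as the maximizing index \emph{over} $\{1,\dots,n_j\}$ together with $\diam(\phi^{n_j-m_j}(B_j,p_j))\to 0$, so that by the already-proved Theorem \ref{var-estavel}(b) applied to $B\in W^u_\eta$ one actually gets $\diam(\phi^n(B,a))\to 0$ as $n\to-\infty$; combined with a symmetric statement this forces $B$ into the intersection. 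I expect the careful handling of "which index realizes the max" and the interplay with the $n_j-m_j\to\infty$ estimate (which itself rests on uniform continuity of $\phi$ on $\bigcup T_i$) to be the only genuinely delicate points; everything else is a repetition of the compactness machinery already deployed for Theorem~\ref{var-estavel}.
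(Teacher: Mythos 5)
Your overall strategy --- contradiction, extraction of a maximizing index $m_j$, hyperspace compactness, and \cite[Lemma 2.9]{KS} to pass the iterates to the limit --- is exactly the paper's. Where you go astray is the endgame, and the paper's resolution is much simpler than the ``stitching'' you propose. The paper proves \emph{both} $m_j\to\infty$ and $n_j-m_j\to\infty$; you only prove the latter. The former follows from the same uniform-continuity argument applied at the left end: since $\diam(A_j)<1/j$, the diameter cannot grow from $<1/j$ to $\ge\epsilon_0$ in a bounded number of return steps. Once both quantities tend to infinity, the \emph{single} limit continuum $B=\lim_j\phi^{m_j}(A_j,x_j)$ already does everything: for every fixed $n\in\ZZ$, positive or negative, one eventually has $0\le m_j+n\le n_j$, hence $\diam(\phi^{m_j+n}(A_j,x_j))\le 2\epsilon_0$, and in the limit $\diam(\phi^{n}(B,a))\le 2\epsilon_0<\eta$. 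Thus $B$ is a non-degenerate element of $W^s_\eta(\cS,\cT)\cap W^u_\eta(\cS,\cT)$, contradicting $cw$-expansivity. No second limit at the terminal index, and no stitching.

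As written, your final step does not close. The limit $\tilde B=\lim_j\phi^{n_j}(A_j,x_j)$ is a single point (its diameter is $<1/j$), so it carries no information, as you yourself note; and the proposed repair via Theorem \ref{var-estavel}(b) is circular, since applying that theorem to $B$ presupposes $B\in W^u_\eta(\cS,\cT)$, i.e.\ that the \emph{backward} iterates of $B$ are bounded by $\eta$ --- which is precisely the half you have not established, and which is exactly what $m_j\to\infty$ buys you. Two smaller points: bounding $\diam(\phi^k(B,a))$ for $k\ge 0$ places $B$ in $W^s_\eta(\cS,\cT)$ under Definition \ref{variedade-estavel-par} (the stable set is the one with forward control), not in $W^u_\eta(\cS,\cT)$; and the conclusion should be drawn from the characterization $W^s_\eta\cap W^u_\eta=\{A:\#A=1\}$ rather than from the decay statement of Theorem \ref{var-estavel}.
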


\begin{proof} The proof goes by contradiction.  If the thesis fails, there are a sequence $\{A_i\}$ of connected and compact sets in $\bigcup^k_{i=1}S_i$,  a sequence of points $\{x_i\}$  in $A_j\cup\bigcup_{i=1}^kS_i$ and an increasing sequence $\{n_i\}$ of natural numbers such that:
\begin{enumerate}
\item $\diam A_i<\frac{1}{i}$, $\forall i\in\mathbb{N}$;
\item $\sup\{\diam\phi^i(A_i,x_i); i=1,...,n_i \}\in[\epsilon_0,2\epsilon_0]$;
\item $\diam (\phi^{n_i}(A_i,x_i))<\frac{1}{i}\,.$ 
\end{enumerate}
By $(1)$, for each $i$ we can choose $0<m_i<n_i$ so that $\diam(\phi^{m_i}(A,x_i))\in[\epsilon_0,2\epsilon_0]$. We will prove that
\begin{eqnarray}\label{1} \lim_{i \to \infty} m_i=\lim_{i\to \infty} (n_i-m_i)=\infty.
\end{eqnarray}  
Fix $N>0$. By continuity, there is $\delta_1>0$ such that if $x\in\bigcup^k_{i=1}T_i$ and $y\in\bigcup^k_{i=1}S_i$ satisfy $d(x,y)<\delta_1$ then $d(\phi^n(x),y^x_n)<\epsilon_0$ for all $n\in\{-N,...,-1,0,1,...,N\}$. Take $i_0$ a natural number sufficiently large such that $\frac{1}{i_0}<\delta_1$. Then for all $i>i_0$ we have that $\diam(A_i)<\delta_1$ and $\diam(\phi^{n_i}(A_i,x_i)<\delta_1$. Therefore,
$$\diam(\phi^{j}(A_i,x_i)<\epsilon_0 \ \mbox{and } \diam(\phi^{n_i-j}(A_i,x_i)<\epsilon_0, \ \forall j\in\{1,...,N\}.$$ 
Thus,  $m_i)>N$ and $n_i-m_i>N$, and since that take $N$ arbitrary, we proved \ref{1}.     

We can assume $\lim \phi^{m_i}(A_i,x_i)\rightarrow B$ and $\phi^{m_i}(x_i)\rightarrow x\in B$. Since $\diam(B) \geq \epsilon$ we have that $B$  is not degenerate. Fix an
arbitrary integer  $n$. Then for $i$ sufficiently big we have $m_i+n\in\{0,...,n_i\}$. Therefore, for  $i$ sufficiently big, we have $\diam(\phi^{m_i+n}(A_i,x_i))<2\epsilon_0$. So, by \cite[Lemma 2.9]{KS}, we obtain 
$$\phi^{m_i+n}(A_i,x_i)\rightarrow \phi^{n}(B,x).$$  
Since  $n$ is arbitrary,  we obtain that $\phi^{n}(B,x)<2\epsilon_0<\eta$ for all $n\in\mathbb{Z}$ which leads to a contradiction to the fact of $X^t$ be $cw$-expansive.
\end{proof}

\begin{lemma}\label{l2} Let $X^t$ be a $cw$-expansive flow and $\epsilon_0$ and $\delta_0$ as the previous lemma. If $A\in \cC(\bigcup_{i=1}^kS_i)$, $A\cap\bigcup_{i=1}^kT_i\neq\emptyset$ such that $\diam A<\delta_0$ and for some integer $m$ and  $x\in A$ $\diam \phi^m(A)\geq\epsilon_0$. Then,
\begin{enumerate}
\item If $m\geq 0$ then $\diam\phi^n(A,x)\geq\delta_0$ for all $n\geq m$. More precisely, there is a continuum $B\subset A$, with $x\in B$ such that $$\sup\{\diam\varphi^j(B,x);j=1,...,n\}\leq\epsilon_0$$ e $\diam\phi^n(B,x)=\delta_0$;  
\item If $m< 0$ then $\diam\phi^{-n}(A,x)\geq\delta_0$ for all $n\geq -m$. More precisely, there is a continuum $B\subset A$, with $x\in B$ such that 
$$\sup\{\diam\phi^{-j}(B,x);j=1,...,n\}\leq\epsilon_0\quad \mbox{and}\quad \diam\phi^{-n}(B,x)=\delta_0.$$ 
\end{enumerate}
\end{lemma}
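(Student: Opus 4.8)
The plan is to prove item (1) and observe that item (2) follows by the same argument applied to the flow $X^{-t}$ (equivalently, reversing the role of $\phi$ by $\phi^{-1}$, using that Lemma \ref{Uniforme} and Theorem \ref{var-estavel} hold symmetrically for past and future iterates). So I would concentrate on the case $m \ge 0$.

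First I would reduce to producing the continuum $B$; the inequality $\diam \phi^n(A,x) \ge \delta_0$ for all $n \ge m$ is a consequence of the refined statement. Indeed, given $n \ge m$, since $\diam \phi^m(A,x) \ge \epsilon_0$ and $\diam A < \delta_0 \le \epsilon_0$, by continuity of the maps $P^i_\rho$ and of the flow there is a first iterate at which the diameter of a suitable subcontinuum reaches $\epsilon_0$; more carefully, I would argue as follows. Consider the family of subcontinua $C$ of $A$ with $x \in C$. The function $C \mapsto \sup\{\diam \phi^j(C,x) : j = 1,\dots,n\}$ is, by \cite[Lemma 2.9]{KS} (continuity of iterates of $\phi$ on continua in the Hausdorff metric) and compactness of $\cC(\bigcup S_i)$ (cited from \cite{N}), a continuous function on the space of such subcontinua, which contains the singleton $\{x\}$ (value $0$) and $A$ itself (value $\ge \epsilon_0$, since $n \ge m$). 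By a standard connectedness argument on the hyperspace — the ``boundary bumping'' type lemma, or simply taking a nested decreasing intersection — there is a subcontinuum $B \subset A$, $x \in B$, with $\sup\{\diam \phi^j(B,x): j=1,\dots,n\} = \epsilon_0$ exactly. For this $B$ we now invoke Lemma \ref{Uniforme}: since $\diam B \le \diam A < \delta_0$, $x \in B \cap \bigcup T_i$ (using $A \cap \bigcup T_i \neq \emptyset$; here I should be slightly careful and choose $x$ to be that intersection point, which is legitimate since we are free to pick the base point), and $\sup\{\diam \phi^j(B,x):j=1,\dots,n\} = \epsilon_0 \in [\epsilon_0, 2\epsilon_0]$, Lemma \ref{Uniforme} yields $\diam \phi^n(B,x) \ge \delta_0$. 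Since $B \subset A$ implies $\phi^n(B,x) \subset \phi^n(A,x)$, we get $\diam \phi^n(A,x) \ge \delta_0$, proving the displayed bound, and $B$ is the desired continuum provided we can upgrade ``$\ge \delta_0$'' to ``$= \delta_0$''.

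To obtain the sharp equality $\diam \phi^n(B,x) = \delta_0$ I would shrink $B$ once more: among subcontinua $B' \subset B$ containing $x$, the function $B' \mapsto \diam \phi^n(B',x)$ is continuous, equals $0$ at $\{x\}$ and is $\ge \delta_0$ at $B$, so by the intermediate-value/connectedness argument on the hyperspace there is a subcontinuum $B'' \subset B$ with $x \in B''$ and $\diam \phi^n(B'',x) = \delta_0$; and since $B'' \subset B$ we still have $\sup\{\diam \phi^j(B'',x):j=1,\dots,n\} \le \epsilon_0$. Renaming $B''$ as $B$ finishes item (1). Item (2) is the mirror statement: replacing $\phi$ by $\phi^{-1}$ throughout (all of Lemma \ref{Uniforme}, Theorem \ref{var-estavel}, and the cited continuity and compactness facts are insensitive to time reversal), the identical argument applied to the backward iterates $\phi^{-j}$ gives the subcontinuum $B$ with $\sup\{\diam \phi^{-j}(B,x):j=1,\dots,n\} \le \epsilon_0$ and $\diam \phi^{-n}(B,x) = \delta_0$ for every $n \ge -m$.

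The main obstacle I anticipate is the intermediate-value argument on the hyperspace $\cC(M)$: one must justify that a subcontinuum realizing a prescribed intermediate value of the diameter functional actually exists. This is where the compactness of $\cC(\bigcup S_i)$ from \cite{N} and the continuity statement \cite[Lemma 2.9]{KS} do the work; concretely, one can take a decreasing sequence of subcontinua whose functional values decrease to the target and pass to a Hausdorff limit, which is again a subcontinuum containing $x$, using the fact that a Hausdorff limit of continua containing a fixed point is a continuum containing that point. A secondary technical point is ensuring the base point $x$ can be taken in $\bigcup T_i$ so that Lemma \ref{Uniforme} applies verbatim; this is handled by choosing $x \in A \cap \bigcup T_i$ from the outset, which the hypothesis $A \cap \bigcup T_i \neq \emptyset$ permits.
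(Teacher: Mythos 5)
Your proposal is correct and follows essentially the same route as the paper: first shrink $A$ to a subcontinuum $B\ni x$ on which $\sup_{j\le n}\diam\phi^j(\cdot,x)$ equals exactly $\epsilon_0$ (the paper realizes your ``intermediate-value argument on the hyperspace'' via an order arc $c:[0,1]\to\cC(\bigcup S_i)$ from $\{x\}$ to $A$ taken from \cite{N}), apply Lemma~\ref{Uniforme} to get $\diam\phi^n(B,x)\ge\delta_0$ and hence the bound for $\phi^n(A,x)$, and then shrink once more using connectedness of the hyperspace of subcontinua to achieve $\diam\phi^n(B,x)=\delta_0$ while preserving the sup bound; the backward case is handled by the same symmetry you invoke. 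No gaps.
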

\begin{proof} Let  $m\geq 0$. By \cite{N}, there is an arc $c:[0,1]\rightarrow \cC(\bigcup_{i=1}^{k}S_i)$ from $\{x\}$ to $A$ such that if $r\leq s$ then $c(r)\subset c(s)$. Fix $n\geq m$. Define a map $F:[0,1]\rightarrow [0,\infty]$ by
$$F(r)=\sup\{\diam\phi^j(c(r),x);j=0,1,...,n\}. $$  
Take $r_0\in[0,1]$ such that $r_0\in F^{-1}(\epsilon_0)$. By Lemma \ref{Uniforme}, 
$$\diam\phi^n(A,x)\geq\diam\phi^n(c(r_0))\geq\delta_0.$$
Define $D:C(c(r_0))\rightarrow[0,\infty]$ by $D(C)=\diam\phi^n(C)$. Since $C(r(a_0))$ is connected, $D^{-1}(\delta_0)$ is non-empty and therefore, all $B\in D^{-1}(\delta_0)$ satisfies $\diam\phi^n(B,x)=\delta_0$ and $$\sup\{\diam\phi^{j}(B,x);j=1,...,n\}\leq\epsilon_0.$$
Similarly we prove for $m \leq 0$.
 
\end{proof}

\begin{clly}\label{c1} Let $X^t$ be a $cw$-expansive flow, $\delta_0$ and $\epsilon_0$ as in  Lemma \ref{Uniforme}. Then, for each $\gamma>0$ there exists $N>0$ such that if $A\subset \bigcup_{i=1}^kS_i$ is a continuum with $A\cap\bigcup_{i=1}^kT_i\neq\emptyset$ and $\diam A\geq\gamma$, then for all $x\in A\cap\bigcup_{i=1}^kT_i$  either $\diam\phi^n(A,x)\geq\delta_0$ for all $n\geq N$ or $\diam\phi^{-n}(A,x)\geq\delta_0$ for all $n\geq N$.   
\end{clly}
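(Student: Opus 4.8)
The plan is to derive Corollary \ref{c1} as a compactness-plus-uniformity consequence of Lemma \ref{l2}. Fix $\gamma>0$. The first step is to argue by contradiction: suppose no such $N$ works, so for each $i\in\NN$ there is a continuum $A_i\subset\bigcup_{j=1}^kS_j$ with $A_i\cap\bigcup_{j=1}^kT_j\ni x_i$, $\diam A_i\geq\gamma$, and yet $\diam\phi^{n_i}(A_i,x_i)<\delta_0$ for some $n_i\geq i$ \emph{and} $\diam\phi^{-\tilde n_i}(A_i,x_i)<\delta_0$ for some $\tilde n_i\geq i$ (one must take care that the failure of the ``either/or'' at every $N$ forces, for arbitrarily large indices, a forward return below $\delta_0$ and a backward return below $\delta_0$; passing to a subsequence we may assume both happen, and that both $n_i,\tilde n_i\to\infty$).

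Next I would invoke Lemma \ref{l2}. Because $\diam A_i\geq\gamma$, while on the other hand $\diam\phi^{n_i}(A_i,x_i)<\delta_0$, the set $A_i$ cannot itself have diameter $<\delta_0$ with a forward orbit staying small; more precisely, since $\diam A_i\geq\gamma$, there is a sub-continuum $B_i\subset A_i$ containing $x_i$ with $\diam B_i$ exactly equal to some value in $[\epsilon_0,2\epsilon_0]$ — this is again the intermediate-value argument along an arc $c:[0,1]\to\cC(\bigcup S_j)$ from $\{x_i\}$ to $A_i$ as in the proof of Lemma \ref{l2}, using the Kelley-type connectedness of the hyperspace \cite{N}. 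Applying Lemma \ref{l2} with $m=0$ to this $B_i$ gives that $\diam\phi^n(B_i,x_i)\geq\delta_0$ for all $n$ large, say all $n\geq m_i$ for some finite $m_i$; but since $\phi^{n}(B_i,x_i)\subset\phi^n(A_i,x_i)$ for each $n$ (projections and first-return are monotone under inclusion of subsets, by the definition of $\phi(A,x)$), this contradicts $\diam\phi^{n_i}(A_i,x_i)<\delta_0$ once $n_i\geq m_i$ — unless $m_i$ itself grows without control.

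Hence the real content is a \emph{uniform} version of Lemma \ref{l2}: the index $m_i$ after which the diameter of $\phi^n(B_i,x_i)$ is forced to exceed $\delta_0$ must be bounded independently of $i$. To get this I would run the same contradiction-with-limits machine used in Lemma \ref{Uniforme}: if $m_i\to\infty$, pass to a subsequence so that $\phi^{m_i}(B_i,x_i)\to B$ in $\cC(\bigcup S_j)$ (compact by \cite{N}) and $\phi^{m_i}(x_i)\to x\in B$, with $\diam B\in[\epsilon_0,2\epsilon_0]$ so $B$ is non-degenerate; then for every fixed $n\in\ZZ$, for $i$ large $m_i+n$ is in the admissible range and $\diam\phi^{m_i+n}(B_i,x_i)<2\epsilon_0$, so \cite[Lemma 2.9]{KS} gives $\phi^{m_i+n}(B_i,x_i)\to\phi^n(B,x)$ and $\diam\phi^n(B,x)\leq 2\epsilon_0<\eta$ for all $n\in\ZZ$; this puts $B\in W^s_\eta(\cS,\cT)\cap W^u_\eta(\cS,\cT)$ with $\#B>1$, contradicting $cw$-expansiveness. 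Therefore $N:=\sup_i m_i<\infty$, and since $\gamma$ determines $\epsilon_0$-sized sub-continua of $A$, this $N$ depends only on $\gamma$ (and on $\delta_0,\epsilon_0$); the symmetric argument handles the backward case. The main obstacle is precisely this uniformity step — making sure the extraction of $B_i$ of controlled diameter and the index $m_i$ can be chosen in a way that the limiting $B$ lies in a cross-section $S_j$ and that the hyperspace convergence in \cite[Lemma 2.9]{KS} applies along the chosen subsequence; once that is in place the contradiction with $cw$-expansiveness is immediate.
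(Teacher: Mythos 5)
Your overall strategy --- argue by contradiction, extract sub-continua, and use hyperspace compactness together with \cite[Lemma 2.9]{KS} and $cw$-expansiveness to manufacture a non-degenerate element of $W^s_\eta(\mathcal{S},\mathcal{T})\cap W^u_\eta(\mathcal{S},\mathcal{T})$ --- is the right one (the paper states the corollary without proof, and this is clearly the intended route). But two steps as written do not work. First, you choose $B_i\subset A_i$ with $\diam B_i\in[\epsilon_0,2\epsilon_0]$ and then ``apply Lemma \ref{l2} with $m=0$''. Lemma \ref{l2} requires $\diam B_i<\delta_0$, and the paper's $\delta_0$ is (and may always be arranged to be) smaller than $\epsilon_0$; so with $m=0$ the hypotheses read ``$\diam B_i<\delta_0$ and $\diam B_i\ge\epsilon_0$'', which is impossible. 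The machinery of Lemmas \ref{Uniforme} and \ref{l2} is built on \emph{small} initial continua (diameter below $\delta_0$) whose \emph{iterates} first reach the band $[\epsilon_0,2\epsilon_0]$; your choice inverts this, and it also silently assumes $\gamma\ge\epsilon_0$. Second, and more seriously, in your uniformity step you assert that for each fixed $n$ and large $i$ one has $\diam\phi^{m_i+n}(B_i,x_i)<2\epsilon_0$, so that \cite[Lemma 2.9]{KS} applies and the limit $B$ has all iterates of diameter at most $2\epsilon_0$. Nothing in your setup controls the iterates between $0$ and $m_i$: in the proof of Lemma \ref{Uniforme} that control is a \emph{hypothesis} (the supremum condition), whereas here it is precisely what must be established. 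As written, the contradiction with $cw$-expansiveness is not reached.

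The gap closes if you pick the sub-continuum on the other side of the scale. Take $C_i\subset A_i$ with $x_i\in C_i$ and $\diam C_i=c:=\tfrac12\min\{\gamma,\delta_0\}<\delta_0$ (this exists for every $\gamma>0$ by the arc in $\cC(\bigcup_j S_j)$ from $\{x_i\}$ to $A_i$), and let $m_i^{+}$ (resp.\ $m_i^{-}$) be the first $m\ge 0$ with $\diam\phi^{m}(C_i,x_i)\ge\epsilon_0$ (resp.\ $\diam\phi^{-m}(C_i,x_i)\ge\epsilon_0$). If $m_i^{+}<\infty$, Lemma \ref{l2} applied to $C_i$ with $m=m_i^{+}$ gives $\diam\phi^{n}(C_i,x_i)\ge\delta_0$ for all $n\ge m_i^{+}$, hence the same for $A_i$ by monotonicity; since your negation supplies $n_i\ge i$ with $\diam\phi^{n_i}(A_i,x_i)<\delta_0$, necessarily $m_i^{+}>n_i\ge i$, and symmetrically $m_i^{-}>\tilde n_i\ge i$. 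Consequently $\diam\phi^{j}(C_i,x_i)<\epsilon_0$ for \emph{all} $|j|\le i$ --- this is the intermediate control that was missing --- and now a hyperspace limit $C$ of a subsequence of the $C_i$ is a continuum of diameter $c>0$ with $\diam\phi^{j}(C,x)\le\epsilon_0<\eta$ for every $j\in\ZZ$, contradicting $W^s_\eta(\mathcal{S},\mathcal{T})\cap W^u_\eta(\mathcal{S},\mathcal{T})=\{A;\ \#A=1\}$.
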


\begin{lemma}\label{l.6} Let $X^t$ be a $cw$-expansive flow and $(\mathcal{S},\mathcal{T})$ be a $\delta$-adequate pair of cross sections. If there is a cross section $T_i$ with topological dimension greater than zero, then there exists a non degenerated continuum $A\subset S_i$ such that either $A\in W^s(\mathcal{S},\mathcal{T})$ or $A\in W^u(\mathcal{S},\mathcal{T})$. 
\end{lemma}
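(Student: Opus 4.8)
The plan is to exploit the fact that a cross-section $T_i$ with positive topological dimension cannot be covered by countably many zero-dimensional pieces, so it must contain a non-degenerate continuum that is not contracted in one time direction. More precisely, by a classical theorem of continuum theory (Hahn--Mazurkiewicz / the fact that positive-dimensional compacta contain non-degenerate subcontinua, as used already via \cite{N}), there exists a non-degenerate continuum $A_0\subset S_i$ with $A_0\cap T_i\neq\emptyset$. Fix $\gamma=\diam(A_0)>0$ and pick $\delta_0,\epsilon_0$ as in Lemma~\ref{Uniforme}. We may pass to a subcontinuum of diameter exactly $\min\{\gamma,\delta_0\}$ using the arc $c:[0,1]\to\cC(\bigcup S_i)$ from a point of $A_0\cap T_i$ to $A_0$ provided by \cite{N}; call it again $A$, and fix $x\in A\cap T_i$.

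Next I would apply Corollary~\ref{c1} to $A$: there is $N>0$ such that either $\diam\phi^n(A,x)\geq\delta_0$ for all $n\geq N$, or $\diam\phi^{-n}(A,x)\geq\delta_0$ for all $n\geq N$. By symmetry assume the first alternative (the stable case will be symmetric). Now I want to build, by an inductive nesting argument, a non-degenerate subcontinuum contained in $W^u(\mathcal{S},\mathcal{T})$ — that is, one whose forward iterates $\phi^n(\cdot,y)$ shrink to a point. The idea: whenever a continuum's diameter reaches $\delta_0$ under backward iteration, use Lemma~\ref{l2}(2) to extract a subcontinuum $B$ through $x$ with $\sup_j\diam\phi^{-j}(B,x)\leq\epsilon_0$ and $\diam\phi^{-n}(B,x)=\delta_0$ at the relevant time. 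Iterating this — at stages $n_1<n_2<\cdots$ chosen so that the backward orbit of the current continuum first reaches size $\delta_0$ — produces a nested sequence $A\supset B_1\supset B_2\supset\cdots$ of continua through $x$ whose intersection $A_\infty=\bigcap_k B_k$ is a continuum through $x$ on which $\diam\phi^{-n}(A_\infty,x)\leq\epsilon_0$ for all $n\geq 0$, i.e.\ $A_\infty\in W^u_{\epsilon_0}(\mathcal{S},\mathcal{T})$. The delicate point is to guarantee $A_\infty$ is \emph{non-degenerate}: one arranges this by always choosing the arc-parametrization so that the subcontinuum extracted at step $k$ still contains a fixed second point, or by a diagonal compactness argument on the arcs $c_k$. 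Then by Theorem~\ref{var-estavel}(b), $A_\infty\in W^u(\mathcal{S},\mathcal{T})$ as desired; in the other alternative the same construction with Lemma~\ref{l2}(1) gives $A_\infty\in W^s(\mathcal{S},\mathcal{T})$.

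The main obstacle I anticipate is precisely the non-degeneracy of the limiting continuum $A_\infty$: each application of Lemma~\ref{l2} only asserts existence of \emph{some} subcontinuum through $x$ with the required diameter bound, and a priori the nested intersection could collapse to $\{x\}$. To handle this I would keep careful track, at every stage, of a definite "witness" — e.g.\ maintain that $\diam B_k \geq \delta_0$ is achieved only after a controlled number of steps and that $B_k$ meets a fixed small sphere around $x$ — so that $\diam A_\infty\geq$ some fixed positive quantity; alternatively, run the argument with the arcs $c:[0,1]\to\cC(\bigcup S_i)$ and a compactness argument in the hyperspace $\cC(M)$ (which is compact by \cite{N}) to extract a convergent subsequence of the $B_k$ whose limit is forced to have diameter bounded below by the uniform constant $\delta_0$ coming from Lemma~\ref{Uniforme}. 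A secondary technical point is bookkeeping the "guide points": the point $x\in A\cap T_i$ used in $\phi^n(\cdot,x)$ must remain in each $B_k$, which is automatic since every extracted subcontinuum in Lemma~\ref{l2} is required to contain $x$.
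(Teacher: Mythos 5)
There is a genuine gap at exactly the point you flag: the non-degeneracy of $A_\infty=\bigcap_k B_k$. Your construction extracts a nested sequence of subcontinua of the \emph{original} continuum $A$ through the fixed point $x$, each with backward iterates controlled for longer and longer times, and takes the intersection. But nothing prevents $\diam B_k\to 0$, and in the typical ``transverse'' situation (think of $A$ as a small disc meeting both the stable and the unstable direction of $x$ in a hyperbolic picture) the set of points of $A$ whose entire backward $\phi$-orbit stays within $\epsilon_0$ of that of $x$ is exactly $\{x\}$, so $A_\infty$ \emph{does} collapse. Note that Corollary \ref{c1} is an inclusive ``or'': knowing that the forward iterates of $A$ eventually exceed $\delta_0$ gives no control at all on the existence of a non-degenerate subcontinuum with small backward iterates. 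Your proposed remedies (keeping a ``witness'' point, or a compactness argument on the $B_k$ themselves) do not repair this: the $B_k$ are nested, so their hyperspace limit is just $\bigcap_k B_k$, and there is no uniform lower bound on $\diam B_k$ to feed into such an argument.

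The fix --- and this is what the paper does --- is to look for the invariant continuum not inside $A$ but as a limit of \emph{images}. One assumes that no subcontinuum of a small continuum $C\subset T_i$ lies in $W^s_{\epsilon_0}(\mathcal{S},\mathcal{T})$ (otherwise we are done), takes continua $C_i\ni x$ with $\diam C_i\to 0$, and uses Lemma \ref{l2} to choose them so that $\sup\{\diam\phi^j(C_i,x):0\le j\le n(i)\}\le\epsilon_0$ while $\diam\phi^{n(i)}(C_i,x)=\delta_0$, with $n(i)\to\infty$. A limit $B$ of the sets $\phi^{n(i)}(C_i,x)$ in the compact hyperspace $\cC(M)$ then has $\diam B\ge\delta_0$ automatically --- non-degeneracy is free, because each approximant has diameter bounded below by the uniform constant of Lemma \ref{Uniforme} --- while every fixed backward iterate $\phi^{-n}(B,\cdot)$ is a limit of the sets $\phi^{n(i)-n}(C_i,x)$, all of diameter at most $\epsilon_0$; hence $B\in W^u_{\epsilon_0}(\mathcal{S},\mathcal{T})$, and Theorem \ref{var-estavel} upgrades this to $B\in W^u(\mathcal{S},\mathcal{T})$. (A small additional slip in your write-up: you describe $W^u$ as the continua whose \emph{forward} iterates shrink; by Definition \ref{def-var-estavel-par} it is the backward iterates that must shrink, though your subsequent computations do use backward iterates.)
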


\begin{proof} Let $C\subset T_{i_0}$ be a continuum non degenerated, with $\diam C\leq \delta_0$. Suppose that any $C'\subset C$ continuum in $C$ does not belong to $W^s_\epsilon(\mathcal{S},\mathcal{T})$. Choose a sequence of connected and compact sets $$C_1\subset C_2\subset...\subset C_n\subset...,$$
 and a sequence of natural numbers 
$n(1)<n(2)<...$, such that $\diam C_i\rightarrow 0$, $C_i\rightarrow x$, $x\in C_i$ for all $i$,
$$\sup \{\diam\phi^{j}(C_i);j=0,1,...,n(i)\}\leq\epsilon_0$$ 
and $\diam \phi^{n(i)}(C_i)\leq\delta_0$ for all $i$. We can suppose that $\phi^{n(i)}(x)\rightarrow x_0\in T_{j_0}$ and $\phi^{n(i)}(C_i)\rightarrow A\in S_{j_0}$ for some $j_0\in \{1,...,k\}$.  So $\diam A\geq\delta_0$. 
Given $n\in \mathbb{N}$, \cite[Lemma 2.9]{KS} implies that $\phi^{n(i)-n}(C_i,x)\rightarrow \phi^{-n}(A_i,x_0)$, and since $\diam \phi^{n(i)-n}(C_i,x)<\epsilon_0$ for all $i$, $\diam \phi^{-n}(A,x_0)<\epsilon_0$. Since $n$ is arbitrary, we obtain $A\in W^u_\epsilon(\mathcal{S},\mathcal{T})$.

\end{proof}

Next we give a sketch of the proof of Theorem \ref{main}.
Fix $\delta_1>0$ small. Given a $\delta$-adequated pair $(\cS,\cT)$ of cross sections, we want to exhibit  $l\in\mathbb{N}$ (independent of $\delta_1$) such that for each $m\in\mathbb{N}$ there is a $(ml,\frac{\delta_1}{3})$-separated set $E$ for $(\cS,\cT)$,  with $2^m$ points.
Since the topological dimension of $M$ is greater than $1$, there is a cross-section $T_i$ with topological dimension greater than $0$. Lemma \ref{l.6} implies that there is a compact and connected set 
$A\in W^u_\eta(\mathcal{S},\mathcal{T})$, $A \subset T_i$, and by Lemma \ref{l2} we can assume that  $\diam A=\frac{\delta_1}{3}$. Fix a natural number $m$. By Corollary \ref{c1} and Lemma \ref{l2} we can find two connected and compact sets $A_1$ and $A_0$ in $\phi^N(A,x)$ such that $d(A_1,A_0)\geq\frac{\delta_1}{3}$, and $\diam(A_{i_1})=\frac{\delta_1}{3}$, for $i_1=0,1$. 
\\
We would like to repeat this process with $A_1$ and $A_2$. If so, we would find a finite collection 
$\{(A_{i_1,i_2,...,i_j},a_{i_1,i_2,...,i_j})\}$ with $i_k\in\{0,1\}$, and $j\leq m$, such that:
\begin{itemize}
\item $a_{i_1,...,i_k}\in A_{i_1,...,i_k}$ with $A_{i_1,...,i_k}$  compact, connected  subsets of $\phi^N(A_{i_1,...,i_{k-1}},a_{i_1,...,i_{k-1}})$ satisfying
$$\diam(A_{i_1,...,i_k})=\frac{\delta_1}{3}\quad \text{ and }\quad  d(A_{i_1,...,i_{k-1},0},A_{i_1,...,i_{k-1},1})\geq\frac{\delta_1}{3};$$ 
\item $A_{i_1,...,i_j}\in W^u_\eta(\mathcal{S},\mathcal{T})$.
\end{itemize}
Thus, for  each $A_{i_1,...,i_m}$ we could choose a point $b(i_1,...,i_m)\in A_{i_1}$ such that 
\begin{itemize}
\item $c_i(1)=b(i_1,...,i_m)^{a_{i_1}}_{N}\in A_{i_1,i_2}$;   
\item $c_i(j)=c(j-1)^{a_{i_1,...,i_{j}}}_{N}\in A_{i_1,...,i_{j+1}}$ for $j\leq m-1$.
\end{itemize}
Then the $E= \{ b_{i_1,...,i_m}, i_k \in \{0,1\}, 1\leq k \leq m\}$ is $(mN,\frac{\delta_1}{3})$-separated for the $\delta$-adequate pair of cross-sections $(\{T\},\{S\})$. In this case, $l=N$.
\\
But it is not always possible to repeat the above argument. In fact, for $A_{i_1}\subset (\bigcup S_j)\setminus(\bigcup T_j)$ there are no points in $ A_{i_1}\cap\bigcup T_j$, and so $A_{i_1}\cap \domi(\phi)=\emptyset$. 

\begin{figure}[htb]
\begin{center}
\psfrag{a}{$T^1$}
\psfrag{b}{$S^1$}
\psfrag{c}{$T^2$}
\psfrag{d}{$S^2$}
\psfrag{e}{$S^3$}
\psfrag{f}{$T^3$}
\includegraphics[height=4cm]{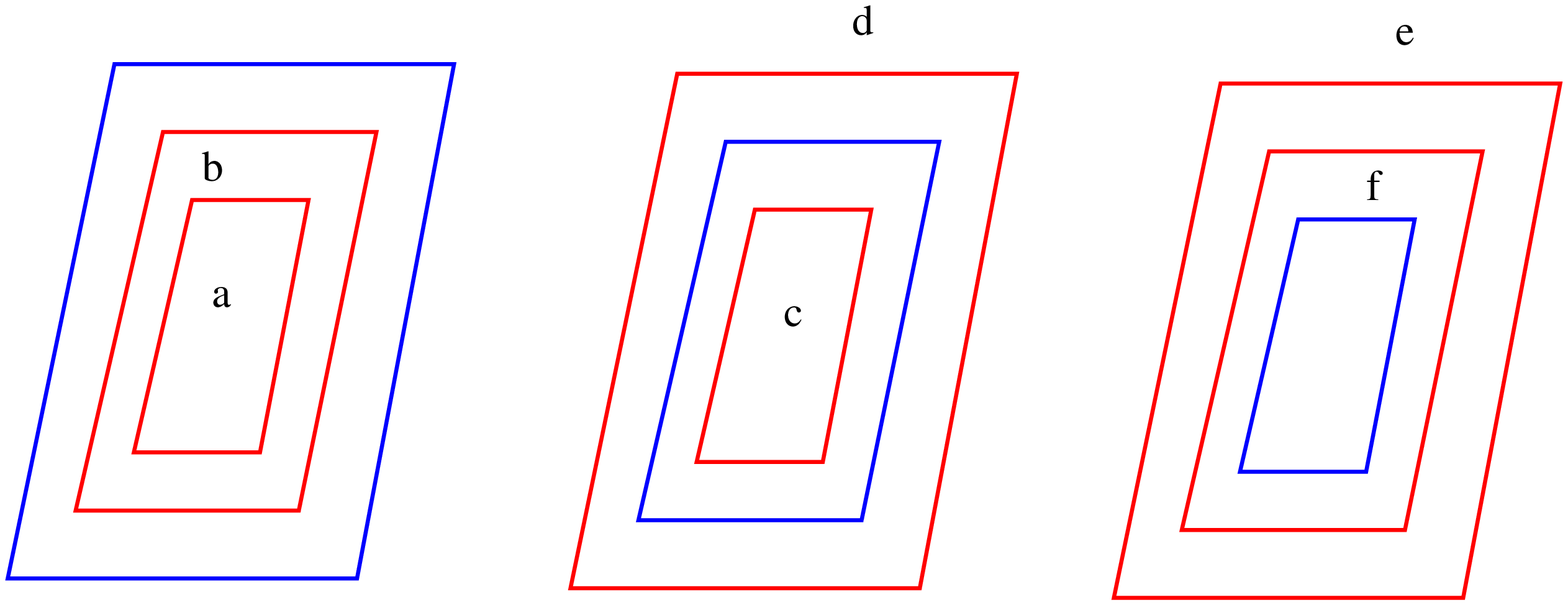}
\caption{} \label{fig5}
\end{center}
\end{figure}

To bypass this difficulty,   we reason with $3$ pairs of $\delta$-adequate families of cross-sections: 
$$(\{T^1_i\},\{S^1_i\}), (\{T^2_i\},\{S^2_i\}), (\{T^3_i\},\{S^3_i\}),$$
such that for all $i\in\{0,...,k\}$, $T^1_i=T^2_i$, $T^3_i=S^1_i$, $S^2_i=S^3_i$, (observe $T^1_i\subset T^2_i\subset S^3_i$).

\begin{figure}[htb]
\begin{center}
\psfrag{a}{$x$}
\psfrag{b}{$A$}
\psfrag{c}{$\phi_3(A,x)$}
\psfrag{d}{$\phi_2(A,x)$}
\includegraphics[height=4cm]{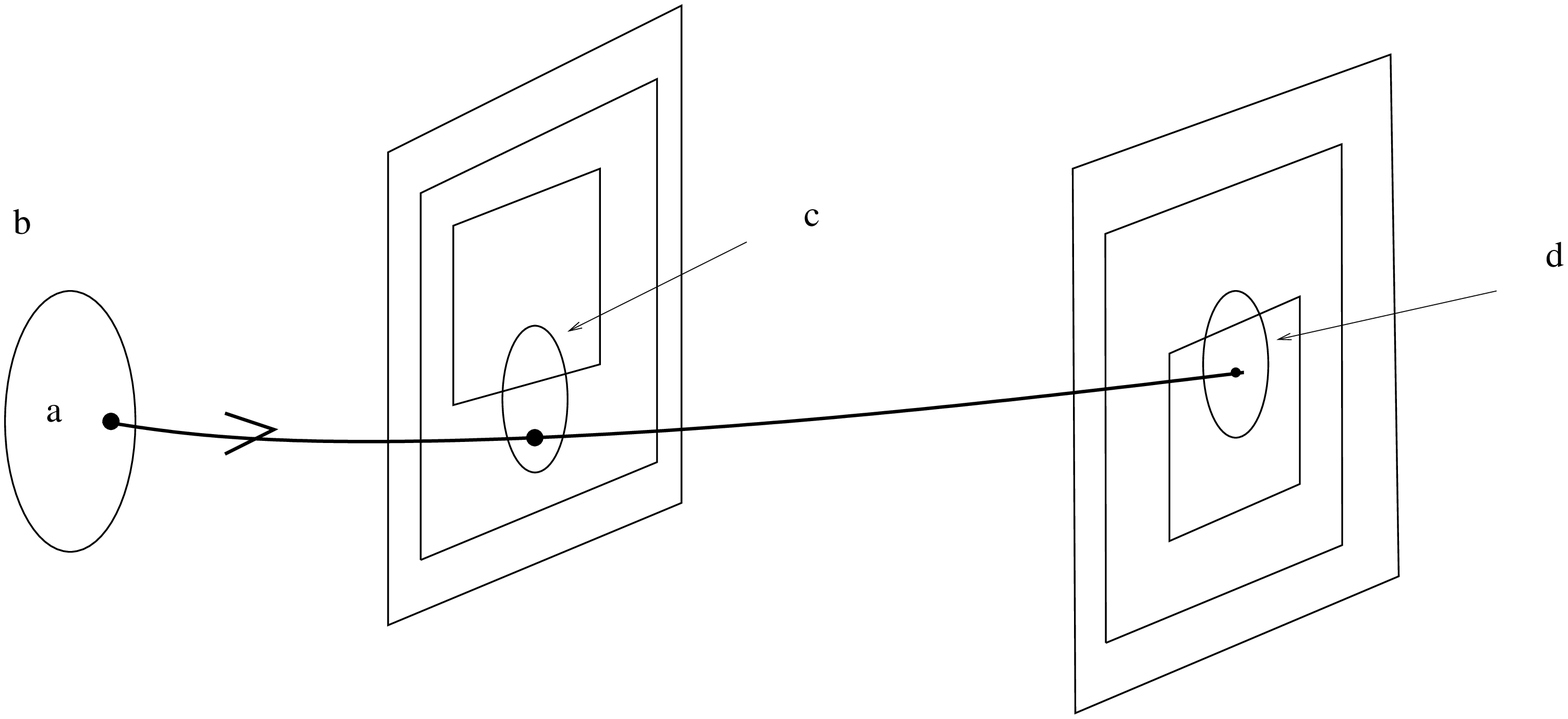}
\caption{} \label{fig6}
\end{center}
\end{figure}

This strategy leads to consider, besides the first return map $\phi$ above,  three other first return maps $\phi_1$, $\phi_2$ and $\phi_3$    relative to the distinct pairs of $\delta$-adequate cross sections $(\cS^1,\cT^1)$, $(\cS^2,\cT^2)$ and $(\cS^3,\cT^3)$.
Further, we shall define an auxiliary map $\varphi: \bigcup T_j^3 \to \bigcup T_j^1$ such that even
if $A_{i_1}\cap \domi(\phi_1)=\emptyset$ we get that $A_{i_1} \subset \phi_1(A,x)
\subset \domi(\varphi)$, and this allow us to continue with the argument.

Since $(\mathcal{S}^i,\mathcal{T}^i)$, $i=1, 2, 3$ are finite families of compact sets there exists $k\in\mathbb{N}$ such that for each $z\in (\bigcup S^2_j)$ there is  $r\in \{1,...,k\}$ with $\varphi(A,z)=\phi_3^r(A,z)$. Therefore, reasoning as before we find a $(lm,\frac{\delta_1}{3})$-separated set  for $(\mathcal{S}^3,\mathcal{T}^3)$, and in this case we put $l=kN$.  Next we formalize this in the proof of
our main result.
\vspace{0.2cm}

\begin{maintheorem} \label{main}If $X^t$ is a $cw$-expansive flow in compact metric space $M$ with topological dimension greater than $1$ then the entropy $h(X^t)$ of the flow is positive.
\end{maintheorem}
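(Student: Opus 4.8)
The strategy is exactly the one outlined in the discussion preceding the statement, and the plan is to carry out the construction of a large separated set for a $\delta$-adequate pair of cross-sections, and then to invoke Theorem~\ref{t-entropia} to transfer positivity of entropy to the flow. Since $M$ has topological dimension greater than $1$, Lemma~\ref{lst} gives a $\delta$-adequate pair (indeed three coherent pairs) of cross-sections with at least one cross-section $T_i$ of positive topological dimension. First I would apply Lemma~\ref{l.6} to obtain a non-degenerate continuum $A_0$ contained in a cross-section which belongs to $W^u(\mathcal{S},\mathcal{T})$ (the stable case is symmetric, replacing $\phi$ by $\phi^{-1}$), and by Lemma~\ref{l2} I may rescale so that $\diam A_0 = \frac{\delta_1}{3}$ for a fixed small $\delta_1 < \epsilon_0$.

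Next I would set up the doubling step. Fix $m \in \NN$. Using Corollary~\ref{c1} (applied with $\gamma = \frac{\delta_1}{3}$, giving a uniform $N$) together with Lemma~\ref{l2}, I would show that inside $\phi^N(A_0, x)$ there are two continua $A_{(0)}, A_{(1)}$ with $d(A_{(0)}, A_{(1)}) \geq \frac{\delta_1}{3}$ and $\diam A_{(i)} = \frac{\delta_1}{3}$, and moreover each $A_{(i)}$ again lies in $W^u(\mathcal{S},\mathcal{T})$ (this uses Theorem~\ref{var-estavel} to see that shrinking is preserved under $\phi$ and sub-continua). Iterating $m$ times produces a dyadic tree of continua $A_{(i_1,\dots,i_j)}$, $i_k \in \{0,1\}$, $j \le m$, with the separation and diameter control listed in the sketch. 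Choosing one point $b(i_1,\dots,i_m)$ in each leaf $A_{(i_1,\dots,i_m)}$ and tracking the orbit-projection sequence $c(j)$ through the tree, the separation $d(A_{(i_1,\dots,i_{k-1},0)}, A_{(i_1,\dots,i_{k-1},1)}) \ge \frac{\delta_1}{3}$ forces $d(\phi^{jN}(x), b^{x}_{jN}) \ge \frac{\delta_1}{3}$ for the first index $j$ at which two codes differ; hence $E = \{ b(i_1,\dots,i_m) \}$ is a $(mN, \frac{\delta_1}{3})$-separated set for the pair with $\#E = 2^m$. Letting $m \to \infty$ gives $s'((\mathcal{S},\mathcal{T}), \frac{\delta_1}{3}) \ge \frac{\log 2}{N} > 0$, so $H'((\mathcal{S},\mathcal{T})) \ge \frac{\log 2}{N} > 0$, and $X^t$ has no fixed points near a continuum by the first lemma of Section~\ref{sec-basic}, so Theorem~\ref{t-entropia} yields $h(X^t) > 0$.

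\textbf{The main obstacle.} The delicate point — flagged already in the sketch — is that a continuum $A_{(i_1)} \subset (\bigcup S_j) \setminus (\bigcup T_j)$ may entirely miss $\bigcup T_j$, so that $\phi$ is not even defined on it and the iteration stalls. The fix is to work with the three coherent pairs $(\mathcal{S}^1,\mathcal{T}^1)$, $(\mathcal{S}^2,\mathcal{T}^2)$, $(\mathcal{S}^3,\mathcal{T}^3)$ satisfying $T^1_i = T^2_i$, $T^3_i = S^1_i$, $S^2_i = S^3_i$, and to introduce the auxiliary first-return map $\varphi: \bigcup T^3_j \to \bigcup T^1_j$; compactness of the families gives a uniform bound $k$ with $\varphi(A,z) = \phi_3^{r}(A,z)$ for some $r \le k$. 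One then runs the doubling argument with $\phi_3$ and, whenever a leaf of the tree would fall outside the domain of $\phi_1$, uses that it still lies in $\phi_1(A,x) \subset \dom(\varphi)$ to re-enter $\bigcup T^1_j$; this only inflates the time from $N$ to $l = kN$, which is still independent of $m$ and of $\delta_1$. I expect the bookkeeping needed to verify that the separation constant $\frac{\delta_1}{3}$ survives this detour — i.e. that applying $\varphi$ rather than $\phi$ does not destroy the lower bound $d(A_{\dots,0}, A_{\dots,1}) \ge \frac{\delta_1}{3}$, which needs the choice $5\rho < \epsilon$, $2\rho < \theta$ and the continuity moduli $\epsilon_0$ from Section~\ref{sec-pairs} — to be the most technical part of the write-up, but no new idea beyond Lemmas~\ref{Uniforme}, \ref{l2} and Corollary~\ref{c1} should be required.
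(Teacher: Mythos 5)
Your proposal is correct and follows essentially the same route as the paper: obtain a non-degenerate unstable continuum from Lemma \ref{l.6}, double it repeatedly via Lemma \ref{l2} and Corollary \ref{c1} to build a dyadic tree of $2^m$ $(\cdot,\frac{\delta_1}{3})$-separated points, handle the continua that miss $\bigcup T_j$ with the three coherent pairs of cross-sections and the auxiliary return map $\varphi$ (inflating the time bound by a uniform factor), and conclude through Theorem \ref{t-entropia}. The only differences are cosmetic (you cite Theorem \ref{var-estavel} explicitly for the persistence of the unstable property under sub-continua, which the paper leaves implicit).
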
 

\noindent{\it Proof.} Take $3$ pairs of $\delta$-adequate families of cross-sections
$(\cT^j, \cS^j)$, $1\leq j \leq 3$, 
$$
(\cT^j, \cS^j)= \{ (\{T^j_i\}, \{S^j_i\}); T^j_i \in \cT^j, S^j_i \in \cS^j\}
$$
such that for all $i\in\{0,...,k\}$, $T^1_i=T^2_i$, $T^3_i=S^2_i$, $S^1_i=S^3_i$, (observe $T^1_i\subset S^2_i\subset S^1_i$). 

For $x\in\bigcup_{i=1}^k S^2_i$ define $\varphi(x)=X^t(x)$ where $t>0$ is the smallest positive time such that $X^t(x)\in \bigcup_{i=1}^k T^1_i$. Observe that $t\in[\theta_3,\epsilon]$ (where $\theta_3$ 
is defined as in (\ref{e.theta}) relative to the third pair of $\delta$-adequate cross-sections families). Let $\theta'$ be the smallest positive time  $t$ such that any point $a\in\bigcup_{i=1}^kS_i^2$ satisfies $X^{(0,t)}(a)\cap\bigcup_{i=1}^kT_i^1\neq\emptyset$. 
Note that the segment of orbit $X^{[0,\theta_3]}(x)$ intersects cross sections of type $T^3_i$
in at most $[\frac{\epsilon}{\theta'}]$ number of times. Hence
$\varphi(x)=\phi_1^i(x)$ for some $i\in\{1,...,[\frac{\epsilon}{\delta'}]\}$ (where $\phi_1$ is the 
first return map 
associated to the pair of families $(\cT^1,\cS^1)$ of cross-sections).

 Let $\epsilon_1\in(0,\epsilon_0)$ be such that if $x,y\in S^2_i$ and $d(x,y)<\epsilon_1$ and $t$ is a real number with $|t|\leq[\frac{\epsilon}{\delta'}]$ and $X^t(x)\in T^1_j$, then $X^t(y)\in D^j_\rho$. If $x\in T_i^1$ and $y\in S^2_i$ with $d(x,y)<\epsilon_1$ define a set of points $\{y^x_{\varphi,0},...,y^x_{\varphi,n}\}\subset \cO(y)$ with $y^x_{\varphi,0}=y$ and $y^ x_{\varphi,l}=P_\rho^l(X^t(y^x_{\varphi,l-1}))$, where $t>0$ is the smallest positive time such that $\varphi^l(x)=X^t(\varphi^{l-1}(x))$, and $l$ is such that $\varphi^j(x)\in T_l$. 
Recall that $P^l_\rho$ is the projection defined in (\ref{e.projecao}).
  We can argue as above whenever $d(\varphi^j(x),y_j)<\epsilon_1$. Similarly  for $j<0$. 
  
  Take $\delta_1\in(0,\delta_0)$ such that if $x\in \bigcup_{i=1}^kT_i^3$ and $y\in \bigcup_{i=1}^kS_i^3$ then $d(\phi_3^i(x),y_i^x)<\epsilon_1$ for all $i\in\{1,...,[\frac{\epsilon}{\delta'}]\}$.

By Lemma \ref{l.6} we can suppose that there exists a non degenerated compact and connected set $A\in W^u_\eta(\mathcal{S}^2,\mathcal{T}^2)$ in some cross section $T^1_i$, and by Lemma \ref{l2} we can assume that $\diam A=\frac{\delta_1}{3}$.  
Let $N$ be given at Corollary \ref{c1}.
Hence if $D\in\bigcup_{i=1}^kS^1_i$ is a compact and connected set with $D\cap\bigcup_{i=1}^kT^1_i\neq\emptyset$,  $\diam D\geq\frac{\delta_1}{3}$, then for all $x\in D$ we have $\max\{\diam\phi_1^j(D,x); |j|\leq N\}>\eta$.
 Hence, by Lemma \ref{l2} and the definition of $\varphi$, if $D\in W^u_\eta(\mathcal{S}^2,\mathcal{T}^2)$ and $\diam D=\frac{\delta_1}{3}$, we get that for all $x\in D$ it holds $\diam\varphi^N(D,x)\geq\delta_1$. 
 
 Fix $m\in \NN$ and $x\in A$.  
 As $\diam\varphi^N(D,x)\geq\delta_1$, 
Lemma \ref{l2} implies that there are two connected and compact sets $A_1$ and $A_0$ in $\varphi^N(A,x)$ such that $d(A_1,A_0)\geq\frac{\delta_1}{3}$ and $\diam(A_{i_1})=\frac{\delta_1}{3}$, for $i_1=0,1$.
Choosing $a_{i_1}\in A_{i_1}$ we get that $\diam\varphi^N(A_{i_1},a_{i_1})\geq\delta_1$
and so Lemma \ref{l2} implies that there are
sets $A_{i_1,0}$ and $A_{i_1,1}$ contained in $\varphi^N(A_{i_1},a_{i_1})$ such that $\diam(A_{i_1,i_2})=\frac{\delta_1}{3}$ and $d(A_{i_1,0},A_{i_1,1})\geq\frac{\delta_1}{3}$, with $i_1,i_2\in \{0,1\}$. 

Continuing this construction, we find a finite collection of sets $\{(A_{i_1,i_2,...,i_j},a_{i_1,i_2,...,i_j})\}$ with $i_k\in\{0,1\}$,  $1\leq j\leq m$, such that:
\begin{itemize}
\item $A_{i_1,...,i_k}$, are compact and connected  subsets of $\varphi^N(A_{i_1,...,i_{k-1}},a_{i_1,...,i_{k-1}})$ with 
$$\diam(A_{i_1,...,i_k})=\frac{\delta_1}{3} \text{ and } d(A_{i_1,...,i_{k-1},0},A_{i_1,...,i_{k-1},1})\geq\frac{\delta_1}{3};$$ 
\item $A_{i_1,...,i_j}\in W^u_\eta(\mathcal{S}^2,\mathcal{T}^2)$.
\end{itemize}
For each $A_{i_1,...,i_m}$ pick a point $b(i_1,...,i_m)\in A_{i_1}$ such that 
\begin{itemize}
\item $c_i(1)=b(i_1,...,i_m)^{a_{i_1}}_{\varphi,N}\in A_{i_1,i_2}$;   
\item $c_i(j)=c(j-1)^{a_{i_1,...,i_{j}}}_{\varphi,N}\in A_{i_1,...,i_{j+1}}$ for $j\leq m-1$.
\end{itemize}
\begin{claim}\label{c-1} The set $E$ consisting of all points $b_{i_1,...,i_m}$ as above is $(m[\frac{\epsilon}{\theta'}]N,\frac{\delta_1}{3})$-separated for the $\delta$-adequate pair of families of cross-sections $(\{T^3_i\},\{S^3_i\})$.
\end{claim}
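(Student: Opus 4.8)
The plan is to show that for any two distinct multi-indices $(i_1,\dots,i_m)\neq(i'_1,\dots,i'_m)$ in $\{0,1\}^m$, the corresponding points $b_{i_1,\dots,i_m}$ and $b_{i'_1,\dots,i'_m}$ are separated by the orbit under the first-return map $\phi_3$. Let $j$ be the first coordinate where the two multi-indices differ, say $i_k=i'_k$ for $k<j$ and $i_j\neq i'_j$. The key observation is that by construction the point $c(j-1)$ associated to $b_{i_1,\dots,i_m}$ lands inside $A_{i_1,\dots,i_j}$ after roughly $(j-1)N$ iterations of $\varphi$, while the corresponding point for $b_{i'_1,\dots,i'_m}$ lands inside $A_{i_1,\dots,i_{j-1},i'_j}$; since $d(A_{i_1,\dots,i_{j-1},0},A_{i_1,\dots,i_{j-1},1})\geq\frac{\delta_1}{3}$, these two points are at distance at least $\frac{\delta_1}{3}$ apart at that moment.

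First I would translate this separation along the $\varphi$-orbit into a separation along the $\phi_3$-orbit. Recall that $\varphi=\phi_1^r$ for some $r\in\{1,\dots,[\frac{\epsilon}{\theta'}]\}$ depending on the point, and that each application of $\phi_1$ (the first-return to $\bigcup T_i^1$) is realized by a bounded number of applications of $\phi_3$ (the first-return to $\bigcup T_i^3=\bigcup S_i^2$), again bounded by $[\frac{\epsilon}{\theta'}]$ since $T^1_i\subset S^2_i$ and each orbit segment of length $\epsilon$ meets the $T^3$-sections at most $[\frac{\epsilon}{\theta'}]$ times. Hence $N$ steps of $\varphi$ correspond to at most $[\frac{\epsilon}{\theta'}]N$ steps of $\phi_3$, and the separation that occurs at the $(j-1)$-st $\varphi$-step occurs at some index $\leq (j-1)[\frac{\epsilon}{\theta'}]N \leq m[\frac{\epsilon}{\theta'}]N$ along the $\phi_3$-orbit. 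One must check that the discrepancy between "the point $c(j-1)$ lies in $A_{i_1,\dots,i_j}$" and "the $\phi_3$-orbit of $b$ at the relevant index equals $c(j-1)$" is controlled — this is where the choice of $\delta_1$ forcing $d(\phi_3^i(x),y_i^x)<\epsilon_1$ for $i\le[\frac{\epsilon}{\theta'}]$, together with the projections $P^l_\rho$, comes in: the points $b_{i_1,\dots,i_m}^{\,\cdot}$ tracked under the $\phi_3$-return map stay $\epsilon_1$-close to the points tracked under $\varphi$, so the $\frac{\delta_1}{3}$-separation of the $A$'s survives (here one needs $\epsilon_1$ small relative to $\frac{\delta_1}{3}$, which can be arranged since $\frac{\delta_1}{3}<\delta_0$ and $\epsilon_1$ was chosen afterward in $(0,\epsilon_0)$ — actually one should make sure the constants are nested in the right order, or simply shrink $\delta_1$ further).

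The main obstacle I expect is precisely this bookkeeping between the three return maps: making rigorous that a $\frac{\delta_1}{3}$-gap between continua $A_{i_1,\dots,i_{j-1},0}$ and $A_{i_1,\dots,i_{j-1},1}$ sitting inside a fixed $S^1$-section translates into a $\frac{\delta_1}{3}$-separation (in the $(n,\gamma)$-separated sense for $(\cS^3,\cT^3)$) of the tracked orbits $b^{a}_{\varphi,\cdot}$ versus $(b')^{a'}_{\varphi,\cdot}$, given that the two orbits might, a priori, visit the $T^3$-sections a different number of times before the decisive step. To handle this I would argue that up to the $(j-1)$-st level the two multi-indices agree, so the sets $A_{i_1,\dots,i_k}$ and base points $a_{i_1,\dots,i_k}$ for $k<j$ literally coincide for both $b$ and $b'$; hence the number of $\phi_3$-iterates needed to reach the $(j-1)$-st $\varphi$-level is the same integer, call it $n_{j-1}\leq (j-1)[\frac{\epsilon}{\theta'}]N$, for both points. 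At index $n_{j-1}$ the tracked point for $b$ is $\epsilon_1$-close to a point of $A_{i_1,\dots,i_{j-1},i_j}$ and that for $b'$ is $\epsilon_1$-close to a point of $A_{i_1,\dots,i_{j-1},i'_j}$, and the triangle inequality with $d(A_{\dots,0},A_{\dots,1})\geq\frac{\delta_1}{3}$ gives $d(\phi_3^{n_{j-1}}(b),(b')^{b}_{n_{j-1}})>\frac{\delta_1}{3}-2\epsilon_1$; choosing $\epsilon_1$ (or re-choosing $\delta_1$) small enough that $\frac{\delta_1}{3}-2\epsilon_1\geq \frac{\delta_1}{3}$ is too strong, so in fact one should set the separation constant to be $\frac{\delta_1}{3}-2\epsilon_1$ from the start, or equivalently fix $\epsilon_1<\delta_1/18$ and replace $\frac{\delta_1}{3}$ by, say, $\frac{\delta_1}{4}$ throughout; since the final conclusion only needs \emph{some} positive separation constant, this is harmless. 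Either way, once the separation at a single index is established and $n_{j-1}\leq m[\frac{\epsilon}{\theta'}]N$, the set $E$ is $(m[\frac{\epsilon}{\theta'}]N,\gamma)$-separated for $(\cS^3,\cT^3)$ with $\gamma$ the chosen positive constant, and $\#E=2^m$, which is exactly the claim.
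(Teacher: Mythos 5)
Your argument is essentially the paper's own proof: locate the first index $k$ where the multi-indices differ, use $d(A_{i_1,\dots,i_{k-1},0},A_{i_1,\dots,i_{k-1},1})\geq\frac{\delta_1}{3}$ to separate the tracked points $c_i(k-1)$ and $c_l(k-1)$, and observe that these occur as $\phi_3$-iterates at a common index $M\leq m[\frac{\epsilon}{\theta'}]N$ because each application of $\varphi$ costs at most $[\frac{\epsilon}{\theta'}]$ returns to $\bigcup T^3_i$. Your extra care about the $\epsilon_1$-discrepancy between tracked points and actual iterates (and the consequent willingness to shrink the separation constant, which is indeed harmless for positivity of entropy) goes beyond what the paper records --- the paper simply asserts $c_l(k-1)=\phi_3^M(b_{l_1,\dots,l_m})$ --- but it does not change the approach.
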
  
Proof. 
Let $b_{i_1,...,i_m}\neq b_{l_1,...,l_m}$ and $k$ the smallest natural number such that $i_k\neq l_k$. 
Then $c_l(k-1)\in A_{i_1,...,i_{k-1},l_k}$ and $c_i(k-1)\in A_{i_1,...,i_{k-1},i_k}$ which implies 
$$d(c_i(k-1),c_l(k-1))\geq\frac{\delta_1}{3}.$$
Since $c_l(k-1)=\phi_3^M(b_{l_1,...,l_m})$ and $c_i(k-1)=\phi_3^M(b_{i_1,...,i_m})$, for some $M\in\{1,...,[\frac{\epsilon}{\theta'}]Nk\}\subset \{1,...,[\frac{\epsilon}{\theta'}]Nm\}$ the claim follows.

Now, Claim \ref{c-1} implies that $s'(mN\frac{\epsilon}{\theta'})\geq 2^m$ and so
\begin{eqnarray*} s'(\{\mathcal{T}_3,\mathcal{S}_3\},\frac{\delta_1}{3})&=&\limsup_{n\rightarrow\infty}\frac{1}{n}log( s'(n,\frac{\delta_1}{3})) \\
&\geq& \limsup_{m\rightarrow\infty}\frac{1}{mN[\frac{\epsilon}{\theta'}]+1}log (s'(mN[\frac{\epsilon}{\theta'}]+1,\frac{\delta_1}{3})) \\
&\geq& \limsup_{m\rightarrow\infty}\frac{m}{mN[\frac{\epsilon}{\theta'}]+1}log(2) = \frac{1}{N[\frac{\epsilon}{\theta'}]}log(2)>0. 
\end{eqnarray*}
Thus $H'(\{\mathcal{T}_3,\mathcal{S}_3\})=s'(\{\mathcal{T}_3,\mathcal{S}_3\})>0$ and 
 Theorem \ref{t-entropia}  implies that $h(X^t)>0$, finishing the proof.

\hfill{$\square$}

\vspace{1cm}
\noindent
{\em A. Arbieto,W. Cordeiro and  M. J. Pacifico }
\vspace{0.2cm}

\noindent Instituto de Matem\'atica,
Universidade Federal do Rio de Janeiro,
C. P. 68.530\\ CEP 21.945-970,
Rio de Janeiro, RJ, Brazil.
\vspace{0.2cm}

\noindent E-mail: arbieto@im.ufrj.br \hspace{.5cm}  welingtonscordeiro@gmail.com \hspace{.5cm} pacifico@im.ufrj.br

\end{document}